\theoremstyle{plain}
\newtheorem{theorem}{Theorem}
\newtheorem*{theorem*}{Theorem}
\newtheorem{corollary}[theorem]{Corollary}
\newtheorem{lemma}[theorem]{Lemma}
\newtheorem{observation}[theorem]{Observation}
\newtheorem{definition}[theorem]{Definition}
\newtheorem{remark}[theorem]{Remark}
\newtheorem*{lemmixtreelike}{Lemma~\ref{lem:mixtreelike}}
\let\epsilon=\varepsilon
\newcommand{\N}{\mathbb{N}}
\newcommand{\Tmix}{T_\mathrm{mix}}
\newcommand{\G}{\mathcal{G}}
\newcommand{\C}{\mathcal{C}}
\newcommand{\B}{\mathcal{B}}
\newcommand{\A}{\mathcal{A}}
\newcommand{\emm}{\mathrm{e}}
\newcommand{\Ec}{\mathcal{E}}
\newcommand{\Pc}{\mathcal{P}}
\newcommand{\Xc}{\mathcal{X}}
\newcommand{\Eb}{\mathbf{E}}
\newcommand{\Ent}{\mathrm{Ent}}
\newcommand{\F}{\mathcal{F}}
\newcommand{\dist}{\mathrm{dist}}
\newcommand{\TV}{\mathrm{TV}}
\newcommand{\M}{M}
\newcommand{\tv}{\mathrm{TV}}
\DeclareMathOperator{\In}{\mathrm{In}}
\DeclareMathOperator{\Out}{\mathrm{Out}}
\newcommand{\ord}{\mathrm{ord}}
\newcommand{\dis}{\mathrm{dis}}
\title{Sampling from the random cluster model on random regular graphs at all temperatures via Glauber dynamics\thanks{A preliminary version of the paper (without the full proofs) appeared in the proceedings of RANDOM 2023. For the purpose of Open Access, the authors have applied a CC BY public copyright licence to any Author Accepted Manuscript version arising from this submission. All data is provided in full in the results section of this paper.}}
\author{Andreas Galanis \and Leslie Ann Goldberg \and Paulina Smolarova}
\date{13 September 2023}
\begin{document}

\maketitle
\begin{abstract}
We consider the performance of Glauber dynamics for the random cluster model with real parameter $q>1$ and temperature $\beta>0$.  Recent work by Helmuth, Jenssen and Perkins detailed the ordered/disordered transition of the model on random $\Delta$-regular graphs for all sufficiently large $q$ and obtained an efficient sampling algorithm for all temperatures $\beta$ using cluster expansion methods. 
Despite this major progress, the performance of natural Markov chains, including Glauber dynamics, is not yet well understood on the random regular graph,  partly because of the non-local nature of the model (especially at low temperatures) and partly because of severe bottleneck phenomena that emerge in a window around the ordered/disordered transition.

 Nevertheless, it is widely conjectured that the bottleneck phenomena that impede mixing from worst-case starting configurations can be avoided by initialising the chain more judiciously. Our main result establishes this conjecture for  all sufficiently large~$q$ (with respect to $\Delta$).  Specifically, we consider the mixing time of Glauber dynamics initialised from the two extreme configurations, the all-in and all-out, and obtain a pair of fast mixing bounds which cover \emph{all} temperatures $\beta$, including in particular the bottleneck window.  Our result is inspired by the recent approach of Gheissari and Sinclair for the Ising model who obtained a similar-flavoured mixing-time bound on the random regular graph for sufficiently low temperatures. To cover all temperatures in the RC model, we refine appropriately the structural results of Helmuth, Jenssen and Perkins about the ordered/disordered transition and show spatial mixing properties ``within the phase'', which are then related to the evolution of the chain.
\end{abstract}

\section{Introduction}

Glauber dynamics is a well-studied Markov chain that is widely used to sample from spin systems such as the Ising and Potts models. A particularly appealing feature of the chain is that it is usually very simple to implement; yet, establishing whether convergence to the equilibrium distribution is fast turns out to be significantly harder, and typically requires an in-depth understanding of the underlying distribution. Here, we focus on studying the performance of Glauber dynamics for the random-cluster representation of the classical Potts model on the random regular graph, where the behaviour of the chain  is underpinned by various phenomena that distinguish it from classical spin systems and pose new challenges in the analysis.

We begin with a few definitions. For real numbers $q, \beta > 0$ and a graph $G = (V,E)$, the \textit{random cluster model} on $G$ with parameters $q$ and $\beta$ is a probability distribution on the set $\Omega=\Omega_G$ of all assignments $\F:E\rightarrow\{0,1\}$; we typically refer to assignments in $\Omega$ as configurations. For a configuration $\F$, we say that edges mapped to $1$ are \textit{in-edges}, and edges mapped to $0$ are \textit{out-edges}. We use $\In(\F)$ to denote the set of edges $e$ with $\F(e)=1$,  
$\Out(\F)$ to denote the set of edges~$e$ with $\F(e)=0$,
$|\F|$ for the cardinality of $\In(\F)$ and $c(\F)$ for the number of connected components in the graph $(V,\In(\F))$. Then, the weight of $\F$  in the RC model is given by $w_G(\F)= q^{c(\F)} (e^\beta-1)^{|\F|}$.

For integer values of $q$, the RC model is closely connected to the (ferromagnetic) Ising/Potts models; $q=2$ is the Ising model and $q\geq 3$ is the Potts model whose configurations are all possible assignments  of $q$ colours to the vertices of the graph where an assignment $\sigma$ has weight proportional to $\emm^{\beta m(\sigma)}$ with $m(\sigma)$ being the number of monochromatic edges under $\sigma$. The RC model is an alternative edge representation of the models (for integer $q$) that has also been studied extensively in its own right due to its intricate behaviour (see, e.g., \cite{GrimmettGeoffrey2006TrmRCM}).

We will be primarily interested in sampling from the so-called  \textit{Gibbs distribution}  on $\Omega$ induced by these weights, denoted by $\pi_G(\cdot)$, where for a configuration $\F$, 
 $\pi_G(\F) =w_G(\F)/Z_G$
where the normalising factor $Z_G = \sum_{\F'\in\Omega_G}w_G(\F')$ is the aggregate sum of weight of all configurations (known as the partition function). We focus on the \textit{Glauber dynamics} which is a classical Markov chain for sampling from Gibbs distributions which is a particularly useful tool for developing approximate sampling algorithms. We will refer to Glauber dynamics for the RC model as the RC dynamics. Roughly, the RC dynamics is a Markov chain $(X_t)_{t\geq 0}$ initialised at some configuration $X_0$ which evolves by iteratively updating at each step $t\geq 1$ a randomly chosen edge  based on whether its endpoints belong to the same component in the graph $(V,\In(X_t))$. The mixing time of the chain is the number of steps to get within total variation distance $\leq 1/4$ from $\pi_G$,  see Section~\ref{sec:prelims} for details.

Our goal is to obtain a fast algorithm for the RC model using Glauber dynamics on the random regular graph. There are two key obstacles that arise, especially at low temperatures (large $\beta$): (i) Glauber dynamics for the RC model has a  non-local behaviour since its updates depend on the component structure of the running configuration, and (ii) there are severe bottleneck phenomena and worst case graphs which prohibit a general fast-convergence result, and more generally an efficient algorithm. The random regular graph is a particularly interesting testbed in this front since it exhibits all the relevant phase transition phenomena and has also been used as the main gadget in hardness reductions \cite{GSVY}.

To overview the phenomena that are most relevant for us, the following picture was detailed in a remarkable development by Jenssen, Helmuth, and Perkins \cite{RCM-Helmuth2020}: for $\Delta\geq 5$ and all sufficiently large $q$,  they established the ordered/disordered transition occurring at some $\beta_c$ satisfying $\beta_c=(1+o_q(1))\tfrac{2\log q}{\Delta}$ (see also \cite{GSVY} for integer $q\geq 3$).\footnote{Recent results of Bencs, Borb{\'e}nyi, and   Csikv{\'a}ri \cite{bencs2022random} yield the exact formula $\beta_c=\log \tfrac{q-2}{(q-1)^{1-2/\Delta}-1}$ for all $q>2$ and $\Delta\geq 3$, which was previously only known for integer $q$ \cite{GSVY}.} Roughly, for $\beta<\beta_c$ a typical configuration of the model is disordered, whereas for $\beta>\beta_c$ it is ordered: disordered configurations resemble the all-out configuration (in that all components are of size $O(\log n)$) whereas ordered configurations resemble the all-in configuration (where there is a giant component with $\Omega(n)$ vertices). The two types of configurations coexist at $\beta=\beta_c$, i.e., each appears with some probability bounded away from zero. The methods in \cite{RCM-Helmuth2020} are based on cluster expansion techniques which also yielded an efficient sampling algorithm at all temperatures $\beta>0$. This is a surprising algorithmic result given that the coexistence causes  multimodality in $\pi_G$ and severe bottleneck phenomena for Markov chains in a window around $\beta_c$; it was shown for instance in \cite{RCM-Helmuth2020} that the RC dynamics (and the related non-local Swendsen-Wang dynamics) have exponential mixing time, essentially because of the number of steps needed for the chain to move from ordered to disordered (and vice versa). 

These results pose a rather bleak landscape for the RC dynamics;  yet, on random regular graphs it is widely conjectured that the multimodality and the associated bottlenecks can be circumvented by initialising the chain more judiciously, in particular at either the all-out or the all-in configurations (depending on whether $\beta\leq\beta_c$).  However the tools available for analysing Markov chains are typically insensitive to the initial configuration, and even more so when working at a critical range of the parameters.

Our main result establishes this conjecture for all $\Delta\geq 5$ and $q$ sufficiently large (conditions which we inherit from \cite{RCM-Helmuth2020}). For an integer $n$ such that $\Delta n$ is even, let $\G_{n,\Delta}$ denote the set of all $\Delta$-regular graphs with $n$ vertices.\footnote{We write $G\sim \G_{n,\Delta}$ to denote a graph in $\G_{n,\Delta}$ chosen uniformly at random, and we say that a property holds w.h.p. for $G\sim \G_{n,\Delta}$  as a shorthand for ``with probability $1-o_n(1)$ over a graph  $G\in \G_{n,\Delta}$ chosen uniformly at random.} Throughout, we use $O(1)$ to denote a constant depending on $q,\beta,\Delta$ but independent of $n$.
\newcommand{\statethmmain}{Let $\Delta\geq 5$ be an integer. There exists $C=C(\Delta)>0$ such that, for all sufficiently large $q$,  the following holds for any $\beta>0$, w.h.p. over $G\sim \G_{n,\Delta}$. 
\begin{enumerate}
    \item For $\beta<\beta_c$,  the mixing time of the RC dynamics starting from all-out  is $O(n\log n).$
    \item For $\beta> \beta_c$, the mixing time of the RC dynamics starting from all-in is $O(n^{C})$. For integer $q$, the mixing time is in fact $O(n\log n)$.
\end{enumerate}}
\begin{theorem}\label{thm:main1}
\statethmmain
\end{theorem}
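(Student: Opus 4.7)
The plan is to adopt the restricted-chain framework that has proven effective for coexistence regimes of spin models, most recently in the approach of Gheissari and Sinclair for the Ising model. The idea is to decompose $\Omega_G$ into a \emph{disordered region} $\Omega^{\dis}$, consisting of configurations all of whose components have size $O(\log n)$; an \emph{ordered region} $\Omega^{\ord}$, consisting of configurations containing a unique giant component spanning $\Omega(n)$ vertices; and a thin set of transitional configurations of exponentially small $\pi_G$-mass. The key structural ingredient I would need to establish — refining the cluster-expansion picture of \cite{RCM-Helmuth2020} — is that the conditional measures $\pi_G(\cdot\mid \Omega^{\dis})$ and $\pi_G(\cdot\mid \Omega^{\ord})$ satisfy a strong spatial mixing property \emph{within the phase}, uniformly in $\beta$ and in particular including $\beta$ in a window around $\beta_c$.

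Given this structural refinement, the first step is to use monotonicity to argue that the chain from the extremal start states does not leave its phase on a polynomial timescale. The RC model is FKG-monotone in the partial order $\F\le\F'\iff \In(\F)\subseteq\In(\F')$, and all-out and all-in are the minimum and maximum in this order. A monotone coupling then forces the chain initialised at all-out (resp.\ all-in) to be pointwise below (resp.\ above) any coupled chain, and combined with the exponentially small $\pi_G$-mass of the transitional set, a standard conductance/hitting-time estimate shows that the chain does not escape $\Omega^{\dis}$ (resp.\ $\Omega^{\ord}$) in sub-exponential time. Consequently, it suffices to bound the mixing time of the chain \emph{restricted} to the appropriate phase.

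The second step is to bound those restricted mixing times via a block dynamics. For the disordered phase, logarithmic component sizes allow one to pick an edge, update an $O(\log n)$-neighbourhood around it, and use spatial mixing within the phase to mix each block in $\mathrm{poly}(\log n)$ time; an $O(n)$-fold iteration together with a canonical-paths/comparison argument then yields $O(n\log n)$. For the ordered phase at general real $q$, the giant component couples distant edges through connectivity, so the block-dynamics analysis only delivers a polynomial rate and hence the $O(n^C)$ bound. When $q$ is an integer, one instead appeals to the Edwards--Sokal coupling to the Potts model: the induced vertex-based Glauber chain is genuinely local, and the same spatial mixing input drives it to mix in $O(n\log n)$.

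The main obstacle is proving the within-phase spatial mixing statement in the first place. The cluster expansion of \cite{RCM-Helmuth2020} is designed to compute $Z_G$ and separate ordered from disordered contributions, but it does not a priori control local marginals of \emph{conditioned} measures. I would work directly in the polymer representation, showing that the activities remain summable under conditioning and that conditioning on configurations outside a neighbourhood of an edge alters its local marginal by an amount exponentially small in the neighbourhood radius, uniformly in $\beta$ up to (and at) $\beta_c$. Ensuring this uniformity, and then translating spatial mixing in the ordered phase — where connectivity introduces long-range dependencies that survive conditioning on the giant component — into a usable Poincar\'e inequality for the restricted block dynamics, is where the bulk of the technical work will lie.
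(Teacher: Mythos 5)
Your high-level skeleton matches the paper's: decompose into ordered/disordered phases, use FKG-monotonicity to tie the extremal start to the correct phase, and make ``spatial mixing within the phase'' the central structural input. But the mechanism you propose for converting spatial mixing into a mixing-time bound is different from the paper's, and it is where the concrete gaps lie. The paper never bounds the mixing time of a \emph{restricted} chain via block dynamics and a Poincar\'e inequality. Instead it couples the full chain $X_t$ (started at all-in) monotonically with a stationary phase-restricted chain $\hat X_t\sim\pi^{\ord}_G$ and with a \emph{local} wired dynamics $X^v_t$ on a ball $B_r(v)$ of radius $r=O(\beta^{-1}\log n)$, obtaining the sandwich $X^v_t\geq X_t\geq \hat X_t$; then $\Pr(X_t(e)\neq \hat X_t(e))$ is bounded by (i) the distance of $X^v_t$ from its stationary measure $\pi_{\B^+_r(v)}$, controlled by an elementary canonical-paths (or, for integer $q$, log-Sobolev) bound on tree-like balls, plus (ii) the WSM error $|\pi_{\B^+_r(v)}(e\mapsto 1)-\pi^{\ord}_G(e\mapsto 1)|\leq 1/(100m)$. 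This avoids entirely the step you flag as problematic — translating within-phase spatial mixing into a functional inequality for a restricted block dynamics in the presence of the giant component — and it is precisely that step that your proposal leaves as an unproven assertion. A block update conditioned on $\Omega^{\ord}$ or $\Omega^{\dis}$ can also push the configuration across the phase boundary, and the comparison constants for block-to-single-edge dynamics in the non-local RC setting are not standard; none of this is addressed.

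Two further points. First, your route to the $O(n\log n)$ bound for integer $q$ via the Edwards--Sokal coupling to Potts Glauber does not prove the stated theorem: the claim is that the \emph{RC dynamics itself}, started from all-in, mixes in $O(n\log n)$, and a fast Potts sampler does not yield that. The paper instead imports a log-Sobolev bound for the wired RC dynamics on tree-like balls (from the tree results of \cite{treemixingRC}), which upgrades the local mixing estimate and hence the global bound. Second, your ``conductance/hitting-time'' argument that the chain does not leave its phase is not quite the right tool for a non-stationary start; what actually works (and what the paper uses) is monotone domination by the stationary restricted chain together with the measure bound $\pi^{\ord}_G(|\In(\F)|\leq(1-\zeta)|E|)=\emm^{-\Omega(n)}$, i.e., the slack between $\zeta$ and $\eta$ in the phase definitions. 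Your sketch of the WSM proof itself (polymer activities summable under conditioning) is also substantially vaguer than what is needed: the paper's argument is a delicate edge-revealing coupling on a cycle-free annulus of the ball combined with tail bounds on polymer sizes, and the uniformity in $\beta$ down to $\beta_c$ requires tracking a $1/\beta$ dependence in the polymer-size threshold.
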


Note that Theorem~\ref{thm:main1} implies 
a polynomial-time 
sampling algorithm from the Potts model for all $\beta\neq \beta_c$ (and all sufficiently large $q$). Intuitively, and as we will see later in more detail, Theorem~\ref{thm:main1} asserts that the RC dynamics starting from all-in mixes quickly within the set of  ordered configurations for $\beta> \beta_c$, and similarly it mixes well within the disordered set of configurations starting from all out when $\beta<\beta_c$. In fact, we show that both of these remain true even for  $\beta=\beta_c$ and hence the RC dynamics can be used to sample even at criticality, starting from an appropriate mixture of the all-in and the all-out configurations. See Section~\ref{sec:prelims} (and Section~\ref{sec:f3434f34} in particular) for the exact statements.

Finally, note that the RC dynamics  can be used analogously to  Theorem~\ref{thm:main1} to produce a sample within total variation distance $\epsilon$ of $\pi_G$ for any $\epsilon\geq\emm^{-\Theta(n)}$, by running it for 
a number of steps which is
$\log(1/\epsilon)$ times the corresponding mixing time bound.\footnote{The standard submultiplicative argument to bootstrap the total-variation distance  goes through using the monotonicity of the RC model (to account for  the constraint on the initial configuration), see also \cite{peres2013}.} The lower bound on the error comes from the total variation distance between $\pi_G$ and the conditional ``ordered'' and ``disordered'' configurations, see Lemma~\ref{lem:dominant}.

\subsection{Further related work} 

Our approach to proving Theorem~\ref{thm:main1} is inspired from a recent paper by Gheissari and Sinclair~\cite{SinclairsGheissari2022} who established similar flavoured results for the Ising model ($q=2$) on the random regular graph for large $\beta$.  To obtain our results for all $\beta$, we adapt suitably their notion of  ``spatial mixing within the phase'', see Section~\ref{sec:ingWSM} for details. 

 Among the results in  \cite{SinclairsGheissari2022}, it was established that Glauber dynamics on the  random regular graph, initialised appropriately, mixes in $O(n \log n)$ time when $\beta$ is sufficiently large.\footnote{Note that, for $q=2$, an $O(n^{10})$ upper bound for the RC dynamics on any graph $G$ was previously known at all temperatures $\beta$ by Guo and Jerrum \cite{GuoJerrum} (see also \cite{FGW}).} More  recently, Gheissari and Sinclair \cite{RClattice} obtained mixing-time bounds for the RC dynamics on the lattice $\mathbb{Z}^d$  under appropriate boundary conditions. They also analyse the mixing time starting from a mixture of the all-in/all-out initialisation. Note that the phase transition on grid lattices is qualitatively different than that of the random regular graph; there, instead of a window/interval of temperatures, the three points $\beta_u,\beta_u^*$ and $\beta_c$ all coincide into a single phase transition point.  See also \cite{borgs2020efficient, helmuth2019algorithmic} for related algorithmic results on $\mathbb{Z}^d$ using cluster expansion methods.

For the random regular graph, Blanca and Gheissari \cite{blanca2021random} showed for all integer $\Delta\geq 3$ and real $q\geq 1$ that the mixing time is $O(n\log n)$ provided that $\beta<\beta_u(q,\Delta)$ where $\beta_u$ is  the  uniqueness threshold on the tree. A sampling algorithm (not based on MCMC)   for  $\beta<\beta_c(q,\Delta)$  and $q,\Delta\geq 3$ was designed by Efthymiou~\cite{Charilaos} (see also \cite{Blanca2}), albeit achieving weaker approximation guarantees. Coja-Oghlan et al.~\cite{coja2023} showed that, for all integer $q,\Delta\geq 3$ and $\beta\in (\beta_u,\beta_u')$ the mixing time is $\emm^{\Omega(n)}$ where $\beta_u'=\log(1+\tfrac{q}{\Delta-2})>\beta_u$ is (conjectured to be) another uniqueness threshold on the tree (see \cite{haggstrom1996random,JONASSON}). More generally, for integer $q\geq 3$, the hardness results/techniques of \cite{GJ,GSVY} yield that for any $\beta>\beta_c$, there are graphs $G$ where the mixing time of the RC dynamics is $\exp(n^{\Omega(1)})$ and the problem of appoximately sampling on graphs of max-degree $\Delta$ becomes \#BIS-hard; on the other hand, for $\beta\leq (1-o_{q,\Delta}(1))\beta_c$ it has been shown in \cite{Coulson,carlson2022algorithms} that the cluster-expansion technique of \cite{RCM-Helmuth2020} yields a sampling algorithm on any max-degree $\Delta$ graph.

As a final note, another model of interest where analogous mixing results for Glauber dynamics (initialised appropriately) should be obtainable is for sampling independent sets on random bipartite regular graphs. However, in contrast to the RC/Potts models, the phase transition there is analogous to that of the Ising model, and hence, establishing the relevant spatial mixing properties close to the criticality threshold is likely to require different techniques, see, e.g., \cite{optimalBIS} for more discussion.

In the next section, we outline the proof of Theorem~\ref{thm:main1}, explaining the main ingredients and showing how to combine them in order to conclude the proof. The rest of the paper is about establishing the main ingredients, see also Section~\ref{sec:dwefwe} for a more detailed overview of the later parts.

\subsection{Independent results of Blanca and Gheissari}
In an independent and simultaneous work, Blanca and Gheissari \cite{BlaGhe} obtain related (but incomparable) results. For $\Delta\geq 3, q\geq 1$ and arbitrarily small $\tau>0$, they show for sufficiently large $\beta$ a mixing time bound of $O(n^{1+\tau})$ for the RC dynamics on the random regular graph starting from an arbitrary configuration (and obtain an analogous result for the grid and the Swendsen-Wang dynamics). Our result instead applies to all $\beta$ for the random regular graph (even the critical window) by taking into consideration the initial configuration; the two papers have different approaches to obtain the main ingredients.

\section{Proof of Theorem~\ref{thm:main1} }\label{sec:prelims}
We start with the formal description of the RC dynamics. Given a graph $G=(V,E)$ and an initial configuration $X_0:E\rightarrow \{0,1\}$, the RC dynamics on $G$ is a Markov chain $(X_t)_{t\geq 0}$ on the set of configurations $\Omega_G$. Let $p:=1-\emm^{-\beta}$ and $\hat{p}:=\tfrac{p}{(1-p)q+p}$ (note that for $q>1$ it holds that $\hat p\in(p/q,p)$). For $t\geq 0$, to obtain $X_{t+1}$ from $X_t$:
\begin{enumerate}
\item Choose u.a.r. an edge $e\in E$. If $e$ is a cut-edge in the graph $(V,\In(X_t)\cup\{e\})$, set $X_{t+1}(e)=1$ with probability $\hat{p}$ (and $X_{t+1}(e)=0$ otherwise).  Else, set $X_{t+1}(e)=1$ with probability $p$, and $X_{t+1}(e)=0$ otherwise.
\item Set $X_{t+1}(f)=X_t(f)$ for all $f\in E\backslash \{e\}$.
\end{enumerate}
It is a standard fact that the distribution of $X_t$ converges to the RC distribution $\pi_G$. Let $\Tmix(G;X_0)=\min_{t\geq 0} \{t\mid \dist_{\mathrm{TV}}(X_t,\pi_G)\leq 1/4\}$ be the number of steps needed to get within total-variation distance $\leq 1/4$ from $\pi_G$ starting from $X_0$, and $\Tmix(G)=\max_{X_0}\Tmix(G;X_0)$ be the mixing time from the worst starting state.

\subsection{The ordered and disordered phases on random regular graphs}\label{sec:orddis}
We review in more detail the ordered/disordered transition, following~\cite{RCM-Helmuth2020}.

\begin{definition}\label{def:ord}
For $\Delta\geq 3$, let $\eta=\eta(\Delta)\in (0,1/2)$ be a small constant  (see Definition~\ref{def:eta}). For $G\in \G_{n,\Delta}$,    the \emph{ordered phase}  is the set of configurations  
$ \Omega^\ord\coloneqq \{\F\in\Omega : |\In(\F)|\geq (1-\eta)|{E}|\}$, whereas the \emph{disordered phase} is the set  $ \Omega^\dis\coloneqq \{\F\in\Omega : |\In(\F)|\leq \eta|{E}| \}$. 
For $q,\beta>0$, let $\pi^\ord_G,\pi^\dis_G$ be the conditional distributions of $\pi_G$ on $\Omega^\ord,\Omega^\dis$, respectively.  
\end{definition}

We will use the following result of Helmuth, Jenssen and Perkins   \cite[Lemma 9]{RCM-Helmuth2020}.  

\begin{lemma}[{\cite[Theorem 1]{RCM-Helmuth2020}}]\label{lem:dominant}
Let $\Delta\geq 5$ be an integer. Then, for all sufficiently large $q$, there exists $\beta_c>0$  satisfying $\beta_c=(1+o_q(1))\tfrac{2\log q}{\Delta}$ such that the following holds for any $\beta>0$ w.h.p. for $G\sim \G_{n,\Delta}$. 
\begin{equation}\label{eq:4tt}
\mbox{if $\beta<\beta_c$, then $\left\| \pi_G-\pi_G^\dis\right\|_{\TV}=\emm^{-\Omega(n)}$}; \qquad \mbox{if $\beta>\beta_c$, then $\left\| \pi_G-\pi_G^\ord\right\|_{\TV}=\emm^{-\Omega(n)}$}.
\end{equation}
Moreover, there exists $\zeta=\zeta(\Delta)>0$ with $\zeta<\eta$ such that  
\begin{equation}\label{eq:margin}
\begin{aligned}
\mbox{for $\beta\leq \beta_c$}, &\quad  \pi_G^\dis\Big(|\In(\F)|\geq \zeta|E|\Big)=\emm^{-\Omega(n)}, \quad \mbox{ and }\\ 
\mbox{for $\beta\geq \beta_c$}, &\quad  \pi_G^\ord\Big(|\In(\F)|\leq (1-\zeta)|E|\Big)=\emm^{-\Omega(n)}.
\end{aligned}
\end{equation}
\end{lemma}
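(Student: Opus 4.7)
The plan is to follow the polymer/cluster-expansion approach of Helmuth, Jenssen and Perkins. Write $Z_G^{\star}\coloneqq \sum_{\F\in \Omega^{\star}}w_G(\F)$ for $\star\in\{\ord,\dis\}$. The starting observation is that a configuration in $\Omega^{\dis}$ is essentially the all-out configuration perturbed by small ``in-polymers'' (small, well-separated connected subgraphs of in-edges), and symmetrically $\Omega^{\ord}$ consists of the all-in configuration perturbed by small ``out-polymers''. Writing the two restricted partition functions as abstract polymer partition functions, one checks that for $q$ sufficiently large and for $G$ having high girth and good expansion (both w.h.p.\ for $G\sim\G_{n,\Delta}$), the Koteck\'y--Preiss convergence criterion holds for \emph{every} $\beta>0$; the cluster expansion then yields
\[ \log Z_G^{\star} = n\cdot f_{\star}(q,\beta,\Delta)+o(n), \qquad \star\in\{\ord,\dis\}, \]
with explicit formulas for $f_{\ord},f_{\dis}$.

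Next I would define $\beta_c$ as the unique solution of $f_{\ord}(q,\beta)=f_{\dis}(q,\beta)$. An asymptotic expansion in $q$ yields $\beta_c=(1+o_q(1))\tfrac{2\log q}{\Delta}$, and monotonicity in $\beta$ of the difference $f_{\ord}-f_{\dis}$ (together with a quantitative lower bound on its derivative at the crossover) ensures that away from $\beta_c$ one free energy strictly dominates the other, giving $Z_G^{\ord}/Z_G^{\dis}=\emm^{\pm\Omega(n)}$ with sign determined by whether $\beta$ is above or below $\beta_c$. To upgrade this to \eqref{eq:4tt}, it remains to control the ``middle'' set $\Omega^{\ast}\coloneqq\Omega\setminus(\Omega^{\ord}\cup\Omega^{\dis})$ of configurations with $\eta|E|<|\In(\F)|<(1-\eta)|E|$. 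A Peierls-style argument combined with the expansion of $G$ shows that any such $\F$ has a boundary between in- and out-edges of size $\Omega(n)$, costing a factor $q^{-\Omega(n)}$ compared to either ground state; a crude enumeration over the at most $2^{|E|}$ choices is absorbed by taking $q$ large enough in terms of $\Delta$, so $\pi_G(\Omega^{\ast})=\emm^{-\Omega(n)}$, and this combined with the exponential gap between $Z_G^{\ord}$ and $Z_G^{\dis}$ gives \eqref{eq:4tt}.

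Finally, for the marginal bound \eqref{eq:margin} I would push the cluster expansion one step further. Under $\pi_G^{\dis}$, the quantity $|\In(\F)|$ equals the total size of the in-polymers appearing in $\F$, so it is a sum of polymer contributions each of which is controlled by the Koteck\'y--Preiss weights. For $q$ large, the expected density of in-edges under $\pi_G^{\dis}$ is $O(1/q)$, hence smaller than any prescribed $\zeta/2$, and standard large-deviation estimates that come out of a convergent cluster expansion (bounding the cumulant-generating function of $|\In(\F)|$ by a convergent series of polymer weights) yield $\pi_G^{\dis}(|\In(\F)|\geq \zeta|E|)=\emm^{-\Omega(n)}$; the ordered case is symmetric after swapping in-edges with out-edges. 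The main obstacle is the first paragraph: verifying the Koteck\'y--Preiss condition \emph{uniformly in $\beta$}, including inside the critical window $\beta\approx\beta_c$, using only the typical geometry of $G\sim\G_{n,\Delta}$ and $q$ large relative to $\Delta$ -- this is exactly where the detailed combinatorial machinery of \cite{RCM-Helmuth2020} (polymer weight bounds, a careful choice of the constant $\eta$, and high-girth counting of subgraphs) does the work.
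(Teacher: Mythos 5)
The paper does not reprove the phase-coexistence machinery at all: for \eqref{eq:4tt} it simply cites \cite[Theorem 1, Items (2), (3), (8)]{RCM-Helmuth2020}, and the only new content in its proof is \eqref{eq:margin}, established in Lemma~\ref{lem:slack} by a much more elementary route than yours. There, $\zeta$ is chosen as $20\Delta/\log q_0$, and for any $\F$ in the ``middle band'' $\zeta|E|\leq|\In(\F)|\leq(1-\zeta)|E|$ the expander inequality $c(\F)/n+|\In(\F)|/|E|\leq 1-\zeta/40$ (Lemma~\ref{lem8}, i.e.\ \cite[Lemma 8]{RCM-Helmuth2020}) combined with the trivial bound $Z\geq \max\{q,(e^\beta-1)^{\Delta/2}\}^n$ gives $w_G(\F)/Z\leq z^{-n\zeta/40}$; a union bound over at most $2^{|E|}$ configurations and the fact that $Z/Z^{\ord}=O(1)$ (from items of HJP's Theorems 1 and 2, which is what handles $\beta=\beta_c$) finish the job. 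Your cluster-expansion large-deviation route for \eqref{eq:margin} is plausible but substantially heavier, and your estimate that the in-edge density under $\pi^{\dis}_G$ is $O(1/q)$ should really be $O(q^{2/\Delta-1})$ near $\beta_c$ --- still $o_q(1)$, so the conclusion survives.

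The one step in your plan that fails as written is the claim that the Koteck\'y--Preiss criterion holds for \emph{both} polymer models at \emph{every} $\beta>0$. It does not: the disordered (in-polymer) weights contain a factor $(e^\beta-1)^{|E(\gamma)|}$ and blow up for large $\beta$, while the ordered (out-polymer) weights contain $(e^\beta-1)^{-|E_u(\gamma)|}$ and blow up for small $\beta$; this is reflected in the paper's Lemmas~\ref{lemm:polymer-weight-disordered} and~\ref{cor:bbb}, which require $\beta\leq\log(q^{2.1/\Delta}+1)$ and $\beta\geq\log(q^{1.9/\Delta}+1)$ respectively. Consequently you cannot define $\beta_c$ as the crossing point of two globally defined free energies; what HJP actually do is show the two convergence regions overlap in a window around $2\log q/\Delta$, define $\beta_c$ by the crossing inside that window, and argue separately outside it. Since you explicitly defer the hard combinatorics to \cite{RCM-Helmuth2020} anyway, the cleaner move --- and the one the paper makes --- is to cite their Theorem~1 outright for \eqref{eq:4tt} and spend your effort only on the slack bound \eqref{eq:margin}.
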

\begin{proof}
The claims about the total variation distance are shown in {\cite[Theorem 1, Items (2), (3), (8)]{RCM-Helmuth2020}}. Equation~\eqref{eq:margin} shows a bit of slack in the definitions of $\Omega^{\dis}$ and $\Omega^{\ord}$ that will be useful later; it follows essentially from the same theorem, we defer the details to Lemma~\ref{lem:slack}.
\end{proof}

\subsection{Main ingredient: Weak spatial mixing within a phase}\label{sec:ingWSM}
Let $G=(V,E)$ be a graph. For $v\in V$ and $r\geq 0$, let $B_r(v)$ denote the set of all vertices in~$V$ whose distance from $v$ is at most $r$.  Let $\pi=\pi_G$ be the RC distribution on $G$ and let $\pi_{\B_r^+(v)}$ be the conditional distribution of $\pi$ where all edges in $E\backslash E(B_r(v))$ are ``in''. We define analogously $\pi_{\B_r^-(v)}$ by conditioning the edges in $E\backslash E(B_r(v))$ to be ``out''.

\begin{definition}\label{def:WSM}
Let $G$  be a graph with $m$ edges. Let $q,\beta>0$ be reals and $r\geq 1$ be an integer. 
We say that the graph $G$ has  \emph{WSM within the ordered phase at radius $r$} if for every $v\in V(G)$ and every edge $e$ incident to $v$,  $\Vert{\pi_{\B_r^+(v)}(e \mapsto \cdot ) - \pi^{\ord}_G(e\mapsto \cdot) }\Vert_{\tv} \leq \frac{1}{100m}$. 
Analogously, we say that $G$  has \emph{WSM within the disordered phase at radius $r$} if for every $v\in V(G)$ and every edge $e$ incident to $v$, $\Vert{\pi_{\B_r^-(v)}(e \mapsto \cdot ) - \pi^{\dis}_G(e\mapsto \cdot) }\Vert_{\tv} \leq \frac{1}{100m}$.
\end{definition}

The bulk of our arguments consists of showing the following two theorems.

\begin{restatable}{theorem}{rrwsm}\label{thm:RRwsm}\label{thm:ordered}
Let $\Delta\geq 5$ be an integer. There exists $M=M(\Delta)>0$ such that for all $q$ sufficiently large, 
the following holds for any $\beta\geq \beta_c$.  W.h.p. over $G\sim \G_{n,\Delta}$,  $G$ has WSM within the ordered phase at a radius $r$ which satisfies $r\leq \tfrac{M}{\beta}\log n$.
\end{restatable}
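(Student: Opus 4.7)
The plan is to establish WSM within the ordered phase by combining local tree-likeness of the random $\Delta$-regular graph, an explicit tree recursion for the RC marginals with the all-in boundary, and the FKG monotonicity of the RC model (valid for $q\geq 1$) to compare boundary conditions. Fix $v\in V(G)$ and an edge $e$ incident to $v$. For $r\leq \tfrac{M}{\beta}\log n$ with $M=M(\Delta)$ chosen small enough that $|E(\B_r(v))|\leq (\eta-\zeta)|E|/2$, the ball $\B_r(v)$ is a tree w.h.p.\ over $G\sim\G_{n,\Delta}$ for every vertex $v$ outside a small exceptional set lying within distance $r$ of a short cycle; the standard Poisson-cycle estimates for $\G_{n,\Delta}$ show this exceptional set has size $n^{o(1)}$ and can be dealt with via a union bound and a local extension of the tree argument. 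For $q$ sufficiently large (so that $\beta_c$ is comfortably larger than $\log\Delta$), such an $M=M(\Delta)$ exists and simultaneously satisfies the quantitative requirements below.

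On a tree-like $\B_r(v)$, the marginal of $e$ under $\pi_{\B_r^+(v)}$ can be computed via the standard RC tree recursion, which propagates the component structure inward along the $(\Delta-1)$-ary tree of depth $r$ starting from the all-in boundary. The key quantitative input we need is that, for $\beta\geq\beta_c$ and $q$ large, the recursion is a strict contraction at its ordered fixed point with per-level rate $\emm^{-\Omega(\beta)}$; this \emph{decay of correlations within the ordered phase} can be derived by bounding the derivative of the tree recursion at the ordered fixed point, in the same vein as the tree computations used in \cite{RCM-Helmuth2020,bencs2022random} to pin down $\beta_c$. Consequently, $\pi_{\B_r^+(v)}(e\mapsto 1)$ differs from its $r\to\infty$ tree-limit by at most $\emm^{-\Omega(\beta r)}$, uniformly in $v$.

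The main obstacle, and the heart of the argument, is relating this tree-limit to $\pi_G^{\ord}(e\mapsto 1)$, since $\pi_G^{\ord}$ is defined through a \emph{global} conditioning $|\In(\F)|\geq(1-\eta)|E|$ rather than through a boundary condition on $\partial\B_r(v)$. I would exploit the slack $\zeta<\eta$ of Lemma~\ref{lem:dominant}: any two configurations that agree outside $E(\B_r(v))$ differ in their number of in-edges by at most $|E(\B_r(v))|\leq(\eta-\zeta)|E|/2$, so conditional on an outside configuration $\tau$ with $|\In(\tau)|\geq (1-\zeta)|E|$, every completion of $\tau$ inside $\B_r(v)$ automatically lies in $\Omega^\ord$. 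By~\eqref{eq:margin} the complement event has $\pi_G^\ord$-probability $\emm^{-\Omega(n)}$, so up to total-variation error $\emm^{-\Omega(n)}$, $\pi_G^\ord(e\mapsto\cdot)$ is a mixture over such typical $\tau$ of the unconditional RC marginal of $e$ with boundary condition $\tau$.

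It then suffices to compare, for typical $\tau$, the marginal of $e$ given $\tau$ with $\pi_{\B_r^+(v)}(e\mapsto\cdot)$. By FKG monotonicity the all-in boundary dominates every $\tau$, and the ``disagreements'' (the out-edges of $\tau$ on $\partial\B_r(v)$) can be propagated inward via the same tree recursion, each contributing at most $\emm^{-\Omega(\beta d)}$ where $d$ is their depth. Since a typical $\tau$ has at most a $\zeta$-fraction of out-edges on $\partial\B_r(v)$ (by an averaging argument on top of~\eqref{eq:margin}), and since for $q$ large the per-level contraction $\emm^{-\Omega(\beta)}$ beats the branching factor $\Delta-1$, the total contribution at $e$ is $\emm^{-\Omega(\beta r)}$. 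Combining,
\[
\bigl\|\pi_{\B_r^+(v)}(e\mapsto\cdot)-\pi_G^{\ord}(e\mapsto\cdot)\bigr\|_{\TV}\leq \emm^{-\Omega(\beta r)}+\emm^{-\Omega(n)}\leq \tfrac{1}{100m}
\]
for $r=\tfrac{M}{\beta}\log n$ with $M=M(\Delta)$ large enough, after a final union bound over $v\in V$ and the $\Delta$ incident edges $e$.
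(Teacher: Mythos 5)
There is a genuine gap at the heart of your argument, in the step where you compare $\pi_G^{\ord}(e\mapsto\cdot)$ with $\pi_{\B_r^+(v)}(e\mapsto\cdot)$. Your reduction of $\pi_G^{\ord}$ to a mixture over outside configurations $\tau$ with $|\In(\tau)|\geq(1-\zeta)|E|$ (using the slack $\zeta<\eta$ so that every completion inside the ball stays in $\Omega^{\ord}$) is sound and matches an ingredient of the paper (Lemma~\ref{cor:eee}). But the subsequent comparison of the $\tau$-boundary with the all-in boundary is where the non-locality of the RC model bites: the influence of $\tau$ on the marginal of $e$ is mediated not by which edges of $\tau$ near $\partial\B_r(v)$ are out, but by the \emph{partition of the boundary vertices into connected components} that $\tau$ induces outside the ball. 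If $\tau$ wires all boundary vertices together, the conditional law inside is exactly $\pi_{\B_r^+(v)}$ (Observation~\ref{obs:same-connectivity-boundary-implies-same-marginals}); if not, the discrepancy is not a collection of local spin disagreements that can be ``propagated inward via the tree recursion, each contributing $\emm^{-\Omega(\beta d)}$.'' No such per-disagreement decay statement exists off the shelf for the RC measure, and establishing a quantitative version of it ``within the ordered phase'' is precisely the hard part of the theorem. The paper handles this by a progressive edge-revealing coupling from the boundary inward, showing that either the revealed in-edges form a giant component containing all boundary vertices of the unrevealed region (whereupon the two conditional laws coincide exactly), or else the ordered configuration contains a polymer of size $\Omega(r/\beta\cdot\beta)=\Omega(r)$, an event whose probability is controlled by the cluster-expansion tail bounds (Lemmas~\ref{lemm:agree-on-edges-of-v-or-large-polymer} and~\ref{cor:bbb}). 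Your proposal contains no substitute for this mechanism.

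Two secondary issues. First, the asserted ``strict contraction of the RC tree recursion at the ordered fixed point with per-level rate $\emm^{-\Omega(\beta)}$'' is doing a lot of work and is not established; the RC model is not a Markov random field, so even defining the recursion whose contraction you need (and verifying it uniformly for all $\beta\geq\beta_c$, including at criticality) is a nontrivial task, not a citation. Second, WSM must hold for \emph{every} vertex $v$, and for $r=\Theta(\log n)$ the set of vertices whose ball contains a cycle is polynomially large, not $n^{o(1)}$, so you cannot discard it; the paper instead uses the locally $K$-treelike property and restricts the coupling to an annulus $B_{r_1}(v)\setminus B_{r_2}(v)$ of width $\Omega(r/K)$ containing no excess edges (Observation~\ref{obs24}). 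This is fixable, but it needs to be built into the argument rather than deferred.
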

The upper bound on the radius $r$ in terms of $1/\beta$ ensures that we can remove the dependence on $\beta$ of the mixing time  in Theorem~\ref{thm:main1} (caused by a loose bound on the mixing time on the tree, see Lemma~\ref{lem:mixtreelike} below).  For the disordered phase, we have

\begin{restatable}{theorem}{rrwsmb}\label{thm:RRwsmB}
For all integer $\Delta\geq 5$, for all $q$ sufficiently large and any $\beta \leq \beta_c$, w.h.p.  
over $G\sim \G_{n,\Delta}$,  $G$ has WSM within the disordered phase at a radius $r$ which satisfies $r\leq \tfrac{1}{3}\log_{\Delta-1}n$.
\end{restatable}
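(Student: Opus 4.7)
The plan is a disagreement-percolation argument, combining the tree-like structure of random regular neighbourhoods at radius $r\le\tfrac{1}{3}\log_{\Delta-1}n$ with exponential decay of in-cluster sizes in the disordered phase.

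I would first observe that, for $r=\tfrac{1}{3}\log_{\Delta-1}n$, a standard first-moment calculation for $\G_{n,\Delta}$ guarantees that w.h.p.\ the ball $B_{r+1}(v)$ induces a tree for every vertex $v$; fix such a $G$, a vertex $v$ and an incident edge $e$. The RC model is FKG, and the event that all edges outside $E(B_r(v))$ are ``out'' is decreasing, so $\pi_{\B_r^-(v)}\preceq\pi_G$ in the stochastic order. The standard grand coupling of Glauber dynamics then yields a monotone coupling $(X,Y)$ with $X\sim\pi_{\B_r^-(v)}$, $Y\sim\pi_G$ and $X\le Y$ pointwise. Since $X\le Y$ and $e$ is incident to $v$, the event $X(e)\neq Y(e)$ forces the presence of an in-edge path in $Y$ connecting $v$ to the boundary $\partial B_r(v)$, so
\[
\bigl|\pi_{\B_r^-(v)}(e\mapsto 1)-\pi_G(e\mapsto 1)\bigr|\;\le\;\Pr_{Y\sim\pi_G}\!\bigl[\,v\longleftrightarrow\partial B_r(v)\text{ in the in-edges of }Y\,\bigr].
\]

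The main technical step is then a uniform exponential decay of two-point connectivities in the disordered phase: for all sufficiently large $q$ and every $\beta\le\beta_c$, there exists $\alpha=\alpha(q,\Delta)$ with $\alpha\to 0$ as $q\to\infty$ such that
\[
\Pr_{\pi^\dis_G}\!\bigl[\,u\leftrightarrow w\text{ in the in-edge subgraph}\,\bigr]\;\le\;\alpha^{\dist(u,w)}\quad\text{for all }u,w\in V.
\]
I would establish this by adapting the cluster-expansion representation of the disordered partition function from~\cite{RCM-Helmuth2020}: in the disordered regime, $Z_{\dis}$ admits a convergent polymer expansion, and the event that $u$ is connected to $w$ can occur only through polymers whose total size is at least $\dist(u,w)$, whose contribution decays geometrically. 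Lemma~\ref{lem:dominant} then lets us pass between $\pi^\dis_G$ and $\pi_G$ at an additive cost of $e^{-\Omega(n)}$.

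Combining the coupling bound with the connectivity estimate and a union bound over the $|\partial B_r(v)|\le\Delta(\Delta-1)^{r-1}=O(n^{1/3})$ boundary vertices gives
\[
\bigl|\pi_{\B_r^-(v)}(e\mapsto 1)-\pi^\dis_G(e\mapsto 1)\bigr|\;\le\;O(n^{1/3})\cdot\alpha^r+e^{-\Omega(n)},
\]
which is at most $\tfrac{1}{100m}$ once $\alpha$ is smaller than an appropriate power of $(\Delta-1)^{-1}$, achievable for $q$ sufficiently large (since $\alpha\to 0$ with $q$). The main obstacle is the uniform connectivity decay: a naive Peierls bound on a $k$-edge cluster gives weight $\lesssim ((e^\beta-1)/q)^k$, but since $\beta_c=(1+o_q(1))\tfrac{2\log q}{\Delta}$ this ratio approaches a constant as $\beta\uparrow\beta_c$ and the naive estimate degrades. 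One therefore needs the finer cluster-expansion control from~\cite{RCM-Helmuth2020}, adapted to yield two-point connectivity (rather than purely partition-function) estimates uniformly throughout the entire disordered range $\beta\le\beta_c$, including the critical temperature.
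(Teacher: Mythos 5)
Your overall strategy---a monotone coupling plus a bound showing that in-components in the disordered phase are unlikely to reach from $v$ to the boundary of the ball---is the same in spirit as the paper's, but as written there are two genuine gaps. First, the claim that w.h.p.\ \emph{every} ball $B_{r+1}(v)$ with $r=\tfrac13\log_{\Delta-1}n$ induces a tree is false: a random $\Delta$-regular graph contains short cycles with probability bounded away from zero, so some balls are not trees. What is true (Lemma~\ref{lem:treelike}) is that every such ball has tree excess at most a constant $K$, and the paper must then locate an annulus $E(B_{r_1}(v))\setminus E(B_{r_2}(v))$ with $r_1-r_2=\Omega(r)$ containing no excess edges (Observation~\ref{obs24}) before any path/disagreement argument goes through; your path-to-the-boundary step needs this repair. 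Second, and more seriously, your plan to couple $\pi_{\B_r^-(v)}$ with the \emph{unconditioned} measure $\pi_G$ and then ``pass to $\pi^\dis_G$ at an additive cost of $e^{-\Omega(n)}$'' fails at $\beta=\beta_c$, which the theorem must cover: at criticality $\Vert\pi_G-\pi_G^\dis\Vert_{\TV}$ is bounded away from zero (both phases have constant weight), so Lemma~\ref{lem:dominant} gives you nothing there. The paper avoids this by working with $\pi^\dis$ directly throughout a revealing process: it first samples the configuration outside $B_{r_1}(v)$ from the projection of $\pi^\dis$, uses the slack $\zeta<\eta$ of Equation~\eqref{eq:margin} to argue that w.h.p.\ every refinement of this partial configuration is automatically in $\Omega^\dis$, and only then are the conditional distributions of $\pi$ and $\pi^\dis$ on the remaining edges identical, which is what licenses the subsequent monotone coupling (note also that stochastic domination between $\pi_{\B_r^-(v)}$ and the \emph{conditioned} measure $\pi^\dis_G$ is not immediate from FKG alone).

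On the main technical step, you correctly identify that a naive Peierls bound degrades as $\beta\uparrow\beta_c$ and that cluster-expansion input from~\cite{RCM-Helmuth2020} is needed, but your proposed uniform two-point estimate $\Pr[u\leftrightarrow w]\le\alpha^{\dist(u,w)}$ with $\alpha\to0$ is only asserted, and it is stronger than what is needed. The paper instead bounds the probability that $\F\sim\pi^\dis_G$ contains \emph{any} in-component (disordered polymer) with at least $C\log n$ edges by $O(n^{-3/2})$ (Lemma~\ref{lemm:polymer-weight-disordered}), which follows by a short peeling argument ($\pi^\dis(\gamma\in\Gamma^\dis(\cdot))\le w^\dis_\gamma$) combined with the already-established convergence condition $\sum_{\gamma\ni v}e^{(1+\rho)|E(\gamma)|}w^\dis_\gamma\le\tfrac12$ from~\cite{RCM-Helmuth2020}; since a disagreement at $v$ forces an in-path of length $\Omega(r)=\Omega(\log n)$ across the excess-free annulus, this weaker one-sided bound suffices and no union bound over boundary vertices is required. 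If you replace your two-point estimate by this ``no large in-component'' lemma and restructure the coupling to condition on the exterior first, your argument aligns with the paper's.
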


\subsection{Second ingredient: Local mixing on tree-like neighbourhoods}\label{sec:localmixing}
We first define a local version of RC dynamics where we perform only updates in a small ball around a vertex. Here, we need to consider the extreme boundary conditions that  all vertices outside of the ball belong in distinct components (``free boundary'') and where they belong to the same component (``wired boundary''); we will refer to these two chains as the free and wired RC dynamics, respectively. For the random regular graph, these ``local-mixing'' considerations are strongly connnected to the $\Delta$-regular tree.

Formally, given a graph $G=(V,E)$ and a subset $U\subseteq V$, let $G[U]$ be the induced subgraph of $G$ on $U$. The tree-excess of a connected graph $G$ is given by $|E|-|V|+1$. For a vertex $v$ in $G$ and integer $r\geq 0$, let $B_r(v)$ denote the set of vertices at distance at most $r$ from $v$ and $S_r(v)$ those at distance exactly $r$ from $v$. For $K>0$, a max-degree $\Delta$ graph $G$ is locally $K$-treelike if for every  $v\in V$ and $r\leq \frac{1}{3}\log_{\Delta-1}|V|$, the graph $G[B_r(v)]$  has tree excess $\leq K$.

\begin{lemma}[see, e.g., {\cite[Lemma 5.8]{SinclairsGheissari2022},\cite[Fact 2.3]{blanca2021random}} ]\label{lem:treelike}
For any integer $\Delta \geq 3$, there is $K>0$ such that w.h.p. $G\sim G_{n,\Delta}$ is locally $K$-treelike.
\end{lemma}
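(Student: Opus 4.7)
The plan is to work in the configuration model representation: associate $\Delta$ half-edges to each vertex and sample a uniformly random perfect matching of the $\Delta n$ half-edges. Conditioning on the resulting multigraph being simple gives the uniform distribution on $\G_{n,\Delta}$, and this conditioning has probability bounded away from zero for constant $\Delta$; moreover, in a multigraph any loop or multi-edge only increases the tree-excess, so it is enough to prove a w.h.p.\ tree-excess bound in the (unconditioned) configuration model. Fix a vertex $v$ and set $r^\star := \lfloor \tfrac{1}{3} \log_{\Delta-1} n \rfloor$, so that $|B_{r^\star}(v)| \leq 2(\Delta-1)^{r^\star} \leq 2 n^{1/3}$. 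I would reveal $B_{r^\star}(v)$ by a breadth-first exploration that pairs up half-edges one at a time, and would bound the number of pairings that close a cycle inside the ball (``back-edges'').

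At each step of the BFS the already-discovered vertex set contains at most $2 n^{1/3}$ vertices and hence at most $2 \Delta n^{1/3}$ incident half-edges, while the pool of still-unpaired half-edges has size at least $\Delta n - O(n^{1/3}) \geq \Delta n /2$. Hence, conditional on the exploration history, the probability that the currently processed half-edge gets matched to one on an already-discovered vertex is at most $C n^{-2/3}$ for some $C = C(\Delta)$. Since the BFS performs at most $\Delta \cdot |B_{r^\star}(v)| \leq 2 \Delta n^{1/3}$ pairings, a union bound over unordered $(K+1)$-tuples of steps yields
\[\Pr\bigl[\, \text{tree-excess of } G[B_{r^\star}(v)] \geq K+1 \,\bigr] \;\leq\; \binom{2\Delta n^{1/3}}{K+1} \bigl(C n^{-2/3}\bigr)^{K+1} \;\leq\; C' n^{-(K+1)/3}\]
for some $C' = C'(\Delta,K)$.

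To conclude, I would observe that the tree-excess of $G[B_r(v)]$ is non-decreasing in $r$: increasing $r$ by one adds the vertices of $S_{r+1}(v)$, each contributing at least one new edge back into $B_r(v)$ (namely the BFS parent-edge), so the number of newly added edges is at least the number of newly added vertices. Thus it suffices to verify the bound at $r = r^\star$, and a union bound over all $n$ vertices gives a total failure probability of at most $n \cdot C' n^{-(K+1)/3}$, which is $o(1)$ as soon as $K \geq 3$. The main technical subtlety is the per-step pairing estimate, which crucially requires $|B_{r^\star}(v)|^2 = o(n)$; this is precisely the role of the constant $\tfrac{1}{3}$ in the radius appearing in the definition of locally $K$-treelike. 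The argument extends straightforwardly if one works directly with the uniform distribution on $\G_{n,\Delta}$ via switchings, but the configuration-model route is cleaner.
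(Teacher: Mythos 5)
Your proof is correct and is exactly the standard configuration-model/BFS-exploration argument that the paper relies on by citation only (the lemma is quoted from \cite[Lemma 5.8]{SinclairsGheissari2022} without proof): condition on simplicity, bound the per-pairing probability of a back-edge by $O(n^{-2/3})$, union-bound over $(K+1)$-tuples of pairings and over the $n$ centres, and use monotonicity of the tree excess in $r$. The only nitpick is the constant in $|B_{r^\star}(v)|\leq 2(\Delta-1)^{r^\star}$ (for $\Delta=3$ the ball can have $3\cdot 2^{r^\star}-2$ vertices), but any $O(n^{1/3})$ bound suffices, so nothing changes.
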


For a graph $G$, a vertex $\rho$ in $G$ and an integer $r\geq 1$, the \emph{free RC dynamics} on $B_r(\rho)$ is the RC dynamics where all edges outside of $B_r(\rho)$ are conditioned to be out  and   only edges of $G$ with  both endpoints in $B_r(\rho)$ are updated. 
\newcommand{\statelemmixtreelikeb}{Let $\Delta\geq 3$ be an integer, and  $q,K>1$, $\beta>0$ be reals. There exists  $C>0$ such that the following holds for any $\Delta$-regular graph $G$ and integer $r\geq 1$.

Suppose that $\rho\in V$ is such that $G[B_r(\rho)]$ is $K$-treelike. Then, with $n=|B_r(\rho)|$,  the mixing time of the free RC dynamics on $B_r(\rho)$ is $\leq  Cn\log n$. }
\begin{lemma}[{\cite[Lemma 6.5] {blanca2021random}}]\label{lem:mixtreelikeb}
\statelemmixtreelikeb
\end{lemma}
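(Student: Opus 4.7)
The plan is to observe that the free RC dynamics on $B_r(\rho)$ is simply the RC Glauber dynamics on $H := G[B_r(\rho)]$: edges outside $B_r(\rho)$ are frozen to out and contribute weight $1$, so the stationary measure restricted to $E(H)$ is exactly $\pi_H$, and the transition rule coincides. Thus the task reduces to proving that the RC Glauber dynamics on the $n$-vertex graph $H$, which has maximum degree $\Delta$ and tree excess at most $K$, mixes in $O(n \log n)$ steps.

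The key is to peel off the chord edges witnessing the bounded tree excess. Fix a spanning forest $T$ of $H$ and let $J := E(H) \setminus E(T)$, so $|J| \leq K$. I would analyse the block dynamics that alternates (a) a heat-bath resampling of the entire block $J$ from its conditional distribution under $\pi_H$ given the current configuration on $T$, and (b) a single sweep of single-edge Glauber updates on the edges of $T$. Step (a) mixes in $O(1)$ steps since $J$ has at most $2^K = O(1)$ configurations. A standard block-dynamics comparison, combined with the monotonicity of the RC measure for $q \geq 1$, then shows that the single-site RC Glauber dynamics on $H$ is at most a constant factor slower than this block chain, so it suffices to bound step (b).

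For any fixed $\eta \colon J \to \{0,1\}$, the conditional measure $\pi_H(\cdot \mid J = \eta)$ on configurations of $T$ coincides with the RC measure on the graph $T/\eta$ obtained by contracting the in-edges of $\eta$. Since $T/\eta$ has tree excess at most $|\eta^{-1}(1)| \leq K$, one can iterate the same conditioning-and-contracting argument $O(K)$ further rounds, each time isolating one chord edge and reducing the tree excess by one, until the remaining graph is a genuine forest. On a forest the RC Glauber dynamics mixes in $O(n \log n)$ steps: every edge of the ``in'' subgraph is a cut-edge by definition, so the heat-bath update has a purely local form, and fast mixing follows from a spatial-mixing / correlation-decay recursion along the subtrees, valid at all $q,\beta > 0$.

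The principal obstacle is the non-locality of the RC update rule --- whether an edge is a cut-edge depends on the global configuration --- which is exactly what the reduction to the tree case eliminates. A delicate technical point is controlling the comparison constants accumulated over the $O(K)$ rounds of contraction: each contraction can create multi-edges and alter the component-count exponent $c(\F)$, so one must verify that the conditional measures on the contracted graphs remain amenable to the tree-based bounds and that the log-Sobolev / spectral-gap comparison factors stay $O(1)$. Since $K$ is constant, all these hidden factors are absorbed into $C$, yielding the claimed $O(n \log n)$ mixing bound.
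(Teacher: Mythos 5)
The paper does not actually prove this lemma---it is imported verbatim from \cite[Lemma 6.5]{blanca2021random}---so I am assessing your argument on its own terms. Your first and last steps are fine: the free RC dynamics on $B_r(\rho)$ is indeed the RC Glauber dynamics on $H=G[B_r(\rho)]$, and on a genuine forest every edge is always a cut-edge, so the measure is the product measure $\mathrm{Ber}(\hat p)^{\otimes E}$ and the dynamics mixes in $O(m\log m)$ steps (you do not even need a correlation-decay recursion there). The genuine gap is in the middle: the iterated conditioning-and-contracting does not terminate as you claim. When you condition on the chord set $J$ and contract its in-edges, the tree edges $E(T)$ acquire up to $|\In(\eta)|\leq K$ new independent cycles in the contracted multigraph, so the cycle rank of the conditional model is again up to $K$---it does not drop to zero, nor even decrease by one per round in the worst case. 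Each further round of this procedure removes at most $K$ edges from the dynamic set while the cycle rank can remain equal to $K$, so $\Omega(n/K)$ rounds may be needed rather than $O(K)$, and the comparison constants accumulated over that many nested levels are not $O(1)$. Relatedly, the block-dynamics comparison you invoke already requires a mixing (or log-Sobolev) bound for the single-edge dynamics on the block $E(T)$ under an \emph{arbitrary} chord boundary condition---which is exactly the contracted-multigraph problem you have not yet solved---so the assertion that ``it suffices to bound step (b)'' is circular.

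The standard repair is a one-shot comparison rather than a reduction to a forest. Since $H$ has cycle rank at most $K$, every configuration $\F$ on $E(H)$ satisfies $|V(H)|\leq c(\F)+|\In(\F)|\leq |V(H)|+K$, and writing $\pi_H(\F)\propto q^{c(\F)+|\In(\F)|}\bigl(\tfrac{e^\beta-1}{q}\bigr)^{|\In(\F)|}$ shows that $\pi_H$ is within a multiplicative factor $q^{\pm K}=O(1)$ of the product measure $\mathrm{Ber}(\hat p)^{\otimes E(H)}$; the one-edge heat-bath probabilities ($p$ versus $\hat p$) likewise differ by at most a factor of $q$. A Dirichlet-form and entropy comparison then transfers the $\Omega(1/m)$ log-Sobolev constant of the product chain to the RC chain with only an $O(1)$ loss, yielding the claimed $O(n\log n)$ mixing time directly, for all $q>1$ and $\beta>0$. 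If you want to keep your block-dynamics framing, you would still need this kind of direct comparison to handle the tree block under worst-case chord boundaries, at which point the block decomposition becomes superfluous.
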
 

To define the wired RC dynamics, for a graph $G$, a vertex $\rho$ in $G$ and an integer $r\geq 1$, let $H$ be the graph obtained by removing all vertices and edges outside of $B_r(\rho)$, and adding a new vertex $v_{\infty}$ connected to all vertices in $S_r(\rho)$. The \emph{wired RC dynamics} on $B_r(\rho)$ is the RC dynamics on $H$  where the edges adjacent to $v_\infty$ are conditioned to be in and only edges of $G$ with  both endpoints in $B_r(\rho)$ are updated. Denote by $\hat{\pi}_{B_r(\rho)}$ the stationary distribution of the wired RC dynamics.  Note that when the graph outside of $B_r(\rho)$ is connected, $\hat{\pi}_{B_r(\rho)}$ induces the same distribution as $\pi_{\B^+_r(\rho)}$.\footnote{\label{fn:chat} More precisely, the weight of a configuration $\F:E(B_r(\rho))\rightarrow \{0,1\}$ in $\hat{\pi}_{B_r(\rho)}$ is proportional to $q^{\hat{c}(\F)}(\emm^{\beta}-1)^{|\F|}$ where $\hat{c}(\F)$ denotes the number of components in the graph $(B_r(\rho),\In(F))$ that do not include any of the vertices in $S_r(\rho)$ (since all of these belong to the same component in the wired dynamics and hence contribute just a single extra factor of $q$).} 

\newcommand{\statelemmixtreelike}{Let $\Delta\geq 3$ be an integer, and  $q,K>1$, $\beta>0$ be reals. There exists  $\hat{C}>0$ such that the following holds for every $\Delta$-regular graph $G=(V,E)$ and any integer $r\geq 1$.

Suppose that $\rho\in V$ is such that $G[B_r(\rho)]$ is $K$-treelike. Then, with $n=|B_r(\rho)|$,  the mixing time of the wired RC dynamics on $B_r(\rho)$ is $\leq \hat{C}n^3(q^{4}\emm^{\beta})^{\Delta r}$. }
\begin{lemma}\label{lem:mixtreelike}
\statelemmixtreelike
\end{lemma}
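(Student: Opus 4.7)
My plan is to reduce the wired RC dynamics on $B_r(\rho)$ to the free RC dynamics on the same ball via a Markov-chain comparison, transferring the $O(n\log n)$ mixing bound from Lemma~\ref{lem:mixtreelikeb} at the cost of a polynomial factor. Both chains update a single edge at each step, so they share the same set of moves; they differ only in (i) the cut-edge check (the wired chain treats $v_\infty$ as an extra connection that identifies all vertices of $S_r(\rho)$) and (ii) whether $p$ or $\hat p$ is used for the acceptance probability, which are within a factor of $q$ of each other. This bounds the ratio of the one-step transition kernels by a constant depending only on $q,\beta$.

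The two stationary distributions are related by the Radon--Nikodym ratio
\[
\frac{\hat\pi_{B_r(\rho)}(\F)}{\pi^{\mathrm{free}}_{B_r(\rho)}(\F)}=\frac{Z^{\mathrm{free}}_{B_r(\rho)}}{Z^{\mathrm{wired}}_{B_r(\rho)}}\cdot q^{\hat c(\F)-c(\F)},
\]
where $\hat c(\F)-c(\F)$ equals the negative of the number of components of $\F$ that touch $S_r(\rho)$, and hence lies in $[-|S_r(\rho)|,0]$. A Diaconis--Saloff-Coste comparison of Dirichlet forms then yields $\mathrm{gap}(\mathrm{wired})\gtrsim \mathrm{gap}(\mathrm{free})/A$, where $A$ is controlled by the variation of the above ratio across $\F$. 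Since $\log(1/\hat\pi_{\min})=O(n)$, this gives
\[
T_{\mathrm{mix}}(\mathrm{wired})\leq \mathrm{gap}(\mathrm{wired})^{-1}\log\!\big(1/\hat\pi_{\min}\big)\leq O\!\big(A\cdot n^2\log n\big),
\]
which matches the claimed bound provided $A\leq (q^4 e^\beta)^{\Delta r}$.

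The main obstacle is precisely this bound on $A$. A brute-force estimate gives only $A\leq q^{|S_r(\rho)|}=q^{O((\Delta-1)^r)}$, which is exponentially worse than required since $|S_r(\rho)|$ can be of order $(\Delta-1)^r$ rather than $\Delta r$. To recover $(q^4 e^\beta)^{\Delta r}$ one must exploit the $K$-treelike structure of $B_r(\rho)$ more carefully: for instance, by running the comparison through a telescope of block dynamics that peel off $B_r(\rho)$ shell-by-shell, so that the boundary ``bookkeeping'' contributed by each shell accumulates only linearly in $r$; or by adapting the encoded canonical-paths construction underlying Lemma~\ref{lem:mixtreelikeb} and directly charging boundary-component changes to $O(\Delta r)$ extra bits of information. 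Either route requires a careful, model-specific combinatorial argument, and I expect this step to be the most technically demanding part of the proof.
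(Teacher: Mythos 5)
Your proposal contains a genuine gap, and you have in fact identified it yourself: the entire difficulty of the lemma is concentrated in the step you defer. A Diaconis--Saloff-Coste comparison between the wired and free dynamics is controlled not by the ratio of one-step kernels alone but by the multiplicative variation of the stationary densities, essentially $\max_{\F}\frac{\hat\pi(\F)}{\pi^{\mathrm{free}}(\F)}\cdot\max_{\F}\frac{\pi^{\mathrm{free}}(\F)}{\hat\pi(\F)}$. As you compute, $\hat c(\F)-c(\F)$ ranges over an interval of length $|S_r(\rho)|-1$ (compare the all-in configuration, where the difference is $1$, with the all-out configuration, where it is $|S_r(\rho)|$), so this variation is $q^{\Theta(|S_r(\rho)|)}=q^{\Theta((\Delta-1)^r)}$. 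For the radii actually used in the paper ($r=\Theta(\log n)$) this is $q^{n^{\Omega(1)}}$, superpolynomially larger than the target $(q^4e^\beta)^{\Delta r}=n^{O(1)}$. The two suggested rescues (a shell-by-shell telescope of block dynamics, or an encoded canonical-paths construction charging boundary-component changes to $O(\Delta r)$ bits) are not carried out, and the first is not obviously viable; so the proof as written does not establish the lemma.

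For comparison, the paper does not go through the free dynamics at all. It runs a canonical-paths argument directly on the wired chain: edges are ordered by a depth-first traversal of a BFS tree of $B_r(\rho)$, paths between configurations flip edges in that order, and the standard injective encoding $g_\kappa(\A,\A')$ is analysed. The DFS order guarantees that only $\Delta r$ tree edges plus the $\leq K$ excess edges can straddle the split point, which bounds the edge-count discrepancy by $\Delta r+K$; the component-count discrepancy is bounded by $O(\Delta r+K)$ via a ``tweaking'' of the four configurations along a root-to-leaf path through the flipped edge and a contraction of $S_r(\rho)$ to a single vertex. This is exactly the ``careful, model-specific combinatorial argument'' your proposal flags as missing, and it is where you would need to do the work.
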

Lemma~\ref{lem:mixtreelike} is proved in Appendix~\ref{app:mixtreelike}.
\begin{remark}\label{rem:treeint} For integer $q>1$, the mixing time bound in Lemma~\ref{lem:mixtreelike} can be improved to $O(n\log n)$ using  results of \cite{treemixingRC}, see Section~\ref{sec:intq} for details.
\end{remark}

\subsection{Proof of Theorem~\ref{thm:main1}}\label{sec:f3434f34}
Assuming for now the above ingredients, we will conclude the proof of Theorem~\ref{thm:main1}. For clarity, we break up the latter into the following pieces. 

\begin{theorem}[Convergence to ordered  for real $q$]\label{thm:main1b}
Let $\Delta\geq 5$ be an integer. Then, for all sufficiently large $q$, there exists $C=C(\Delta)$ such that the following holds for $\beta\geq  \beta_c$, w.h.p. over $G\sim \G_{n,\Delta}$. The RC dynamics $(X_t)_{t\geq 0}$ on $G$ starting from all-in satisfies $\dist_{\mathrm{TV}}(X_T,\pi_G^\ord)\leq 1/5$ for $T=O(n^{C})$. 
\end{theorem}
\begin{theorem}[Convergence to disordered  for real $q$]\label{thm:main1c}
Let $\Delta\geq 5$ be an integer. Then, for all sufficiently large $q$ and $\beta\leq \beta_c$, w.h.p. over $G\sim \G_{n,\Delta}$, the RC dynamics $(X_t)_{t\geq 0}$ on $G$ starting from all-in satisfies $\dist_{\mathrm{TV}}(X_T,\pi^\dis_G)\leq 1/5$ for $T=O(n\log n)$. 
\end{theorem}
\begin{theorem}[Faster convergence to ordered for integer $q$]\label{thm:main1d}
Let $\Delta\geq 5$ be an integer. Then, for all sufficiently large integer $q$ and $\beta\geq \beta_c$, w.h.p. over $G\sim \G_{n,\Delta}$, the RC dynamics $(X_t)_{t\geq 0}$ on $G$ starting from all-in satisfies $\dist_{\mathrm{TV}}(X_T,\pi^\dis_G)\leq 1/5$ for $T=O(n\log n)$.  
\end{theorem}
Using these, the proof of Theorem~\ref{thm:main1} can be concluded easily.
\begin{proof}[Proof of Theorem~\ref{thm:main1}]
Follows immediately by Theorems~\ref{thm:main1b}-\ref{thm:main1d}, since by Lemma~\ref{lem:dominant} we have that 
$\left\| \pi_G-\pi_G^\ord\right\|_{\TV}=\emm^{-\Omega(n)}$ when $\beta>\beta_c$ and $\left\| \pi_G-\pi_G^\dis\right\|_{\TV}=\emm^{-\Omega(n)}$.
\end{proof}

We next show how to combine the ingredients of Sections~\ref{sec:orddis}-\ref{sec:localmixing} in order to conclude Theorem~\ref{thm:main1b}. The proofs of Theorems~\ref{thm:main1c} and~\ref{thm:main1d} are very similar, but involve one more technical tool in order to conclude the $O(n\log n)$ bounds. We  defer their proof to Section~\ref{sec:thmmain2}.

\begin{proof}[Proof of Theorem~\ref{thm:main1b}] 
The argument resembles that of~\cite{SinclairsGheissari2022}, a bit of care is required to combine the pieces. Consider $G=(V,E)\sim \G_{n,\Delta}$ with $n=|V|$ and $m=|E|$.  Let $q$ be sufficiently large so that both Lemma~\ref{lem:dominant} and Theorem~\ref{thm:RRwsm} apply; assume also that Lemma~\ref{lem:treelike} applies so that $G$ is locally $K$-treelike. 
By Lemma~\ref{lem:dominant}, for every $\beta \geq \beta_c$,  
$\pi_G^\ord\big(|\In(\F)|\leq (1-\zeta)|E|\big)=\emm^{-\Omega(n)}$.

From now on consider 
arbitrary $\beta\geq  \beta_c$ and set $\beta_0:=\log(q^{1.9/\Delta}+1)$. 
Since $\beta_c=(1+o_q(1))\tfrac{2\log q}{\Delta}$, we have that $\beta\geq \beta_0$ for all sufficiently large $q$. 
Moreover, by Theorem~\ref{thm:RRwsm}, $G$ has  WSM within the ordered phase at  radius $r$ for some  $r\leq \tfrac{M}{\beta}\log n$, where  $M=M(\Delta)>0$ is a constant independent of $\beta$.  Note that by taking $q$ large,  we can ensure that $\beta_0$ and hence $\beta$ are at least $3M\log(\Delta-1)$ so that $r\leq \tfrac{1}{3}\log_{\Delta-1}n$ (and hence the radius-$r$ neighbourhood of an arbitrary vertex in $G$ is locally $K$-treelike).

We will consider the RC dynamics $(X_t)_{t\geq 0}$ with $X_0$ being the all-in configuration on the edges. 
We will also consider the ``ordered'' RC dynamics $\hat{X}_t$ with $\hat{X}_0\sim\pi^\ord_G$ where we reject moves that lead to configurations outside of $\Omega^{\ord}$;  since $\hat{X}_0$ is stationary note that $\hat{X}_t\sim\pi^\ord_G$ for all $t\geq 0$.
We will show how to couple these two dynamics in $O(n^C)$ steps. The theorem will follow by showing that ${\mathrm{dist}}_{\mathrm{TV}}(X_T,\hat{X}_T)\leq 1/5$, where $T=O(n^{2+\log W})$ with $W=\Delta^{2M/\beta_0} \emm^{M \Delta(\Delta+1)}$ being independent of~$\beta$.

Consider the dynamics  $(X_t)_{t\geq 0}$  and $(\hat{X}_t)_{t\geq 0}$.    For $t\geq 0$, let $\Ec_t$ be the event that $\In(\hat{X}_t)\geq (1-\zeta)|E|$ and  let $\Ec_{< t}:=\bigcap_{t'=0,\hdots,t-1} \Ec_{t'}$. From Lemma~\ref{lem:dominant} we have that $\pi^{\ord}_G(\Ec_t)\geq 1-\emm^{-\Omega(n)}$ and hence by a union bound $\pi^{\ord}_G(\Ec_{< t})\geq 1-t\emm^{-\Omega(n)}$ as well. 

We couple the  evolution of $X_t$ and $\hat{X}_t$ using the monotone coupling, i.e., at every step of the two chains choose the same edge $e_t$ to update and use the same uniform number $U_t\in [0,1]$ to decide whether to include $e_t$ in each of $X_{t+1},\hat{X}_{t+1}$. Using the monotonicity of the model for $q\geq 1$ (and in particular that $p>\hat{p}$), under the monotone coupling, for all $t\geq 0$ such that  $\Ec_{<t}$ holds (and hence no reject move has happened in $\hat{X}_t$ so far), we have that $\hat{X}_t\leq X_t$  (i.e., $\In(\hat{X}_t)\subseteq \In(X_t)$). To complete the proof, it therefore suffices to show that  
\begin{equation}\label{eq:4r34mm}
    \Pr(X_T\neq \hat{X}_T)\leq 1/4.
\end{equation}
Consider an arbitrary time $t\geq 0$. By a union bound, we have that
\begin{equation}\label{eq:rvffv3}
\Pr\big(X_{t}\neq \hat{X}_t\big)\leq \sum_{e}\Pr\big(X_t(e)\neq \hat{X}_t(e)\big)\leq m\Pr\big(\overline{ \Ec_{<t}}\big)+\sum_{e}\Pr\big(X_t(e)\neq \hat{X}_t(e)\mid \Ec_{< t}\big).
\end{equation}
Fix an arbitrary edge $e$ incident to some vertex $v$, and let $(X^v_t)$ be the wired RC dynamics on $G[B_r(v)]$. We couple the evolution of $(X_t^v)$ with that of $(X_t)$ and $(\hat{X}_t)$ using the monotone coupling analogously to above, where in $X_t^v$ we ignore updates of edges outside the ball $G[B_r(v)]$). We have  $X_t^v\geq X_t$ for all $t\geq 0$, and hence, conditioned on $\Ec_{<t}$, we have that $X^v_t\geq X_t\geq \hat{X}_t$. It follows that
\begin{align*}
\Pr\big(X_t(e)\neq \hat{X}_t(e)\mid \Ec_{< t}\big)&=\Pr(X_t(e)=1\mid \Ec_{<t})-\Pr(\hat{X}_t(e)=1\mid \Ec_{<t})\\
&\leq|\Pr(X^v_t(e)=1\mid \Ec_{<t})-\Pr(\hat{X}_t(e)=1\mid \Ec_{<t})|.
\end{align*}
For any two events $A,B$, we have $|\Pr(A)-\Pr(A\mid B)|\leq 2 \Pr(\overline{B})$,  
so using this for $B=\Ec_{<t}$ and $A$ the events $\{X^v_t(e)=1\},\{\hat{X}_t(e)=1\}$, the triangle inequality gives 
\begin{align*}
\Pr\big(X_t(e)\neq \hat{X}_t(e)\mid \Ec_{< t}\big)
\leq 4\Pr\big(\overline{ \Ec_{<t}}\big)+| \Pr(X^v_t(e)=1)-\Pr(\hat{X}_t(e)=1)|
\end{align*}
Note that $\Pr(\hat{X}_t(e)=1)=\pi^{\ord}_G(e\mapsto 1)$, so another application of triangle inequality gives 
\begin{equation}\label{eq:rvffv2}
\begin{aligned}
\Pr\big(X_t(e)\neq \hat{X}_t(e)\mid \Ec_{< t}\big)\leq4\Pr\big(\overline{ \Ec_{<t}}\big)+\big|\Pr(X^v_t(e)&=1)-\pi_{\B^+_r(v)}(e\mapsto 1)\big|\\
&+\big|\pi_{\B^+_r(v)}(e\mapsto 1)-\pi^{\ord}_G(e\mapsto 1)\big|.
\end{aligned}
\end{equation}
Since $G$ has WSM within the ordered phase at radius $r$, we have that
\begin{equation}\label{eq:rvffv1}
\big|\pi_{\B^+_r(v)}(e\mapsto 1)-\pi^{\ord}_G(e\mapsto 1)\big|\leq 1/(100 m).
\end{equation}
Moreover, let $T_v$ be the mixing time of the wired RC dynamics on $G[B_r(v)]$ and let  $N_v=|E(B_r(v))|\leq \Delta^{r+1}$. Since $r\leq \tfrac{1}{3}\log_{\Delta-1} n$, $G[B_r(v)]$ is $K$-treelike, so from Lemma~\ref{lem:mixtreelike}, with  $\hat{C}=O(1)$ denoting the constant there (and absorbing a couple of factors of~$\Delta$ into it), 
\[T_v\leq \hat{C} (N_v)^3 (q^4 \emm^\beta)^{\Delta r}\leq \hat{C} N_v\Delta^{2r}q^{4\Delta r}\emm^{\beta \Delta r}= \hat{C} N_v \Big(\Delta^{2M/\beta} q^{4\Delta M/\beta}\emm^{M \Delta}\Big)^{\log n}\leq \hat{C} N_v W^{\log n},\]
where in the last inequality we used that $\beta>\beta_0$, $\beta_0> \tfrac{1}{\Delta}\log q$ and $W=\Delta^{2M/\beta_0} \emm^{M \Delta(\Delta+1)}$. For $T=\Theta(n^{2+\log W})$,  we have
$T\geq 40T_v \tfrac{m}
{N_v}\log m$, so by Chernoff bounds, with probability $1-\exp(-n^{\Omega(1)})$, we have at least $10T_v \log m$ updates inside  $B_r(v)$ among  $t=1,\hdots, T$. For integer $k\geq 1$ the distance from stationarity after $kT_v$ steps is at most $(1/4)^k$, we obtain 
\begin{equation}\label{eq:rvffv}
\big|\Pr(X^v_T(e)=1)-\pi_{\B^+_r(v)}(e\mapsto 1)\big|\leq \exp(-n^{\Omega(1)})+ \emm^{-4\log m}\leq 1/m^3.
\end{equation}
Plugging \eqref{eq:rvffv1} and \eqref{eq:rvffv} into \eqref{eq:rvffv2} for $t=T$, and then back into \eqref{eq:rvffv3}, we obtain using $\Pr(\overline{\Ec_{<T}})\leq T\emm^{-\Omega(n)}$ that $\Pr(X_T\neq \hat{X}_T)\leq 5m T \emm^{-\Omega(n)}+m/m^3+1/100\leq 1/5$, as needed.
\end{proof}
\begin{remark}
The value $T$ in the proof of Theorem~\ref{thm:main1b} 
gives the running time $O(n^{C(\Delta)})$ in the statement.
In particular, $T = \Theta(n^{2 + \log W})$
where  $W=\Delta^{2M/\beta_0} \emm^{M \Delta(\Delta+1)}$,
$\beta_0 > \log(q)/\Delta$,
and $M = \Theta(\Delta \log \Delta)$ (see the start of Section~\ref{sec4}) so $C(\Delta) = O(\Delta^3 \log \Delta)$.
\end{remark}
To get the improved mixing time bounds of $O(n\log n)$ in Theorems~\ref{thm:main1c} and~\ref{thm:main1d} the reasoning is very similar. The main difference from the above proof is that for any vertex $v$ the mixing time $T_v$ is bounded by $T_v=O( N_v \log N_v)$, and therefore the above argument yields a mixing time upper bound of $O(n(\log n)^2)$ so a bit more care is needed to remove the extra $\log n$ factor using a log-Sobolev inequality, the details can be found in Section~\ref{sec:comp}.

Finally, note that Theorems~\ref{thm:main1b},~\ref{thm:main1c} and~\ref{thm:main1d} all apply to the critical case $\beta=\beta_c$ as well. The only difference at criticality is that the two phases coexist \cite{RCM-Helmuth2020} (i.e., each appears with $\Omega(1)$ probability), so in order to obtain a sample from $\pi_G$, one should output a sample for $\pi^{\ord}_G$ with some probability $Q$ and otherwise a sample from $\pi^{\dis}_G$. The value of $Q$ can be computed in time $\tilde{O}(n^2)$ by approximating the corresponding partition functions, by using, e.g., the algorithms in \cite{RCM-Helmuth2020,fastpolymers} (or even the RC dynamics itself). Precise results characterising the distribution of $Q$ can further be found in \cite[Theorems 2 \& 3]{RCM-Helmuth2020}; it is shown for example that $Q$ converges to $1/(q+1)$ as $q$ grows large.
\newcommand{\statecorcrit}{Let $\Delta\geq 5$ be an integer. 
Let $q$ be sufficiently large and $\beta = \beta_c$. 
W.h.p. over $G\sim \G_{n,\Delta}$
there is a polynomial-time randomised algorithm   for sampling from the random cluster model with parameters~$q$ and~$\beta$.}

\subsection{Organisation of the rest of the paper}\label{sec:dwefwe}
We first give a summary of how to obtain our WSM results for the ordered phase in the next section (Section~\ref{sec:wsmsummary}); this is the most involved part of our arguments. Then, in Section~\ref{sec:graph}, we revisit the polymer framework for the random-cluster model and show some technical results that we will need to carry out the WSM proofs in Sections~\ref{sec4} and~\ref{sec:WSMdis} (for the ordered and disordered regimes, respectively). We conclude with Section~\ref{sec:thmmain2} which has some left-over technical pieces needed for the proof of Theorem~\ref{thm:main1}. Appendix~\ref{app:mixtreelike} has, for completeness, a proof of the mixing-time bound on the tree stated in Lemma~\ref{lem:mixtreelike} (uses relatively standard arguments) and Appendix~\ref{app:boundary} reviews some standard monotonicity properties of the random-cluster model.  

\section{Proof outline of the WSM within the ordered phase}\label{sec:wsmsummary}

\subsection{Locally tree-like expanders}
Analogously to \cite{RCM-Helmuth2020}, we work a bit more generally with $\Delta$-regular expanders, which are also tree-like. The \textit{expansion profile} of an $n$-vertex graph $G=(V,E)$  for  $\epsilon>0$ is given by
\[\phi_G(\epsilon) \coloneqq \min_{S\subseteq V;\ 0 < |S| \leq \epsilon n} \frac{|E(S,V\backslash S)|}{\Delta|S|}.\]
Then the classes $G_{\Delta,\delta}$ and  $\G_{\Delta,\delta,K}$ are as follows.

\begin{definition}
Let $\Delta \geq 5$ be an integer, and $\delta\in(0,1/2), K>0$ be reals. 
$G_{\Delta, \delta}$ is the class of $\Delta$-regular graphs such that
$\phi_G(1/2) \geq 1/10$ and $\phi_G(\delta) \geq 5/9$.
$\G_{\Delta,\delta,K}$ is the class of 
all locally $K$-treelike 
graphs $G\in \G_{\Delta,\delta}$.
\end{definition}

We use the following lemma.

\begin{lemma}[{\cite[Proposition 37]{RCM-Helmuth2020}}]\label{lem:class}
Fix $\Delta \geq 5$. There is a constant $\delta\in(0,1/2)$  such that w.h.p. a uniformly random $\Delta$-regular graph belongs to $\G_{\Delta,\delta}$.
\end{lemma}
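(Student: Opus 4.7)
The plan is to work in the configuration model, where a random $\Delta$-regular multigraph on $n$ vertices is obtained by assigning $\Delta$ labelled half-edges to each vertex and pairing the $\Delta n$ half-edges uniformly at random. A classical result of Bollob\'as says that for fixed $\Delta$ the resulting multigraph is simple with probability bounded below by a positive constant, and conditional on simplicity its law is exactly uniform on $\G_{n,\Delta}$. Hence it is enough to show that both expansion conditions hold with probability $1 - o_n(1)$ in the configuration model, which I will do via a first-moment argument over subsets $S$ with $|S| \leq n/2$.

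For a fixed subset $S$ of size $s$, the number $k = k(S)$ of matched pairs internal to $S$ (among the $\Delta s$ half-edges incident to $S$) satisfies $|E(S,V\setminus S)| = \Delta s - 2k$, so the event that expansion at $S$ drops below $c$ corresponds to $k \geq \Delta s(1-c)/2$. A direct count in the configuration model yields
\[
\Pr[k(S) = k] = \frac{\binom{\Delta s}{2k}(2k-1)!!\,\binom{\Delta(n-s)}{\Delta s - 2k}(\Delta s - 2k)!\,(\Delta(n-s) - \Delta s + 2k - 1)!!}{(\Delta n - 1)!!}.
\]
Applying Stirling and writing $s = \alpha n$ and $2k = \beta \Delta s$, the logarithm of $\binom{n}{s}\Pr[k(S)=k]$ equals $n\, F(\alpha,\beta;\Delta) + O(\log n)$ for an explicit analytic function $F$. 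A union bound over the $\binom{n}{s}$ subsets of a given size then bounds the expected number of $s$-sets violating expansion $c$ by $n \exp\!\bigl(n \sup_{\beta \geq 1-c} F(\alpha,\beta;\Delta)\bigr)$.

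The remaining task is to choose a single $\delta = \delta(\Delta) \in (0, 1/2)$ for which two inequalities hold. First, $\sup_{\beta \geq 4/9} F(\alpha,\beta;\Delta) < 0$ for all $\alpha \in (0, \delta]$: this is clear for small $\alpha$, since the entropy contribution $\alpha \log(1/\alpha) + O(\alpha)$ coming from $\binom{n}{s}$ vanishes as $\alpha \to 0$, while the cost of producing many internal pairs is bounded below by a positive multiple of $\alpha$. Second, $\sup_{\beta \geq 9/10} F(\alpha,\beta;\Delta) < 0$ uniformly for $\alpha \in [\delta, 1/2]$: on this compact interval one maximises $F$ explicitly (the critical case being the balanced cut $\alpha = 1/2$) and checks that the large-deviation rate stays strictly negative for $\Delta \geq 5$. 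Summing the resulting estimates over $s \in \{1,\dots,\lfloor n/2\rfloor\}$ and passing back from the configuration model to the uniform measure on $\G_{n,\Delta}$ (at the cost of a constant factor) then yields the lemma.

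The main obstacle is calibrating $\delta$ so that both inequalities hold simultaneously, and in particular verifying the ``global'' bound at the worst case $\alpha = 1/2$, $\beta = 9/10$ when $\Delta = 5$: this is precisely where the hypothesis $\Delta \geq 5$ is used, since for smaller $\Delta$ the rate function ceases to be strictly negative at the balanced cut and one cannot obtain a constant edge-expansion lower bound w.h.p. A minor additional subtlety is ensuring uniformity in $s$ of the subexponential error terms from Stirling, which is routine.
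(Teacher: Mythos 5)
The paper does not actually prove this lemma: it is quoted verbatim from \cite[Proposition~37]{RCM-Helmuth2020}, so there is no in-paper argument to compare against. Your route --- configuration model, exact count of the number of internal pairings of the $\Delta s$ half-edges of a set $S$, Stirling, a rate function $F(\alpha,\beta;\Delta)$, and a union bound over $s\le n/2$ --- is the standard way such expansion statements for random regular graphs are established (going back to Bollob\'as's bound on the isoperimetric number), and it is essentially the argument behind the cited proposition. Your counting formula and the dictionary $|E(S,V\setminus S)|=(1-\beta)\Delta s$, so that the two thresholds $5/9$ and $1/10$ become $\beta\ge 4/9$ and $\beta\ge 9/10$, are correct, and the verification at the balanced cut for $\Delta=5$ is indeed the tight spot for the global bound.

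There is, however, one step whose justification is backwards as written. For the small-set regime you argue that $\sup_{\beta\ge 4/9}F(\alpha,\beta;\Delta)<0$ for small $\alpha$ because ``the entropy contribution $\alpha\log(1/\alpha)+O(\alpha)$ vanishes as $\alpha\to0$, while the cost of producing many internal pairs is bounded below by a positive multiple of $\alpha$.'' If the cost were only $c\alpha$ while the entropy is $\alpha\log(1/\alpha)$, the entropy would \emph{dominate} as $\alpha\to0$ and the first moment would diverge. What actually saves the argument is that forcing a $\beta\ge 4/9$ fraction of the $\Delta s$ half-edges of $S$ to pair internally costs roughly $\tfrac{2\Delta}{9}\,\alpha\log(1/\alpha)\,n$, i.e.\ the cost carries its own $\log(1/\alpha)$ factor, and it beats the entropy $\alpha\log(1/\alpha)\,n$ precisely because $2\Delta/9>1$, which is equivalent to $\Delta\ge5$. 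So the hypothesis $\Delta\ge5$ is used in the small-$\alpha$ regime as well, not only at $\alpha=1/2$ as you claim. Relatedly, for $s=O(1)$ the exponential rate is zero and one must track the polynomial factors (the bound there is of the form $n^{(1-2\Delta/9)s+o(s)}$, again negative exactly when $\Delta\ge5$); this should be handled separately from the $s=\Theta(n)$ regime rather than folded into a single $e^{nF}$ estimate. With these two corrections the proof goes through.
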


Lemma~\ref{lem:class} and Lemma~\ref{lem:treelike} show that there is also a positive integer~$K$ such that, w.h.p, $G\in \mathcal{G}_{\Delta,\delta,K}$.
Next we state an important property of expanders from \cite{trevisan2016expanders}. 
 
\begin{lemma}[{\cite[Lemma 2.3]{trevisan2016expanders}}]\label{lemm:big-component}
Let $G=(V,E)$ be a regular graph and consider $E'\subseteq E$ with $|E'|\leq\theta |E|$ for some $\theta\in(0,\phi_G(1/2))$. Then $(V,E\backslash E')$ has a component of size at least $\big(1 - \tfrac{\theta}{2 \phi_G(1/2)}\big) |V|$.
\end{lemma}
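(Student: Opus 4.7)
The plan is to case-split on whether the largest connected component $C^*$ of $H = (V,E\setminus E')$ has size at least $|V|/2$. The key structural observation, used throughout, is that every edge of $G$ that crosses two different components of $H$ must belong to $E'$ (otherwise its two endpoints would already be joined in $H$), which is what lets us transfer expansion in $G$ into bounds on components of $H$.

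In the easy case $|C^*|\geq |V|/2$, I would apply expansion to the set $S := V\setminus C^*$, which satisfies $|S|\leq |V|/2$ and (being a union of components of $H$) has all its outgoing edges in $E'$. This gives $\phi_G(1/2)\,\Delta\,|S|\leq |E(S,C^*)|\leq |E'|\leq \theta\Delta|V|/2$, so $|S|\leq \theta|V|/(2\phi_G(1/2))$, which rearranges to $|C^*|\geq \bigl(1-\tfrac{\theta}{2\phi_G(1/2)}\bigr)|V|$, exactly as claimed.

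The main difficulty, and the crux of the argument, is ruling out the complementary case $|C^*|<|V|/2$. A naive greedy-grouping argument (adding components one by one into a set $S$ until $|S|$ first exceeds $|V|/2$) only yields a bound of the form $|C^*|\geq \bigl(1-\tfrac{\theta}{\phi_G(1/2)}\bigr)|V|$, which is useless as $\theta$ approaches $\phi_G(1/2)$. Instead I would use a global double-counting argument: when $|C^*|<|V|/2$, every component $C_i$ of $H$ satisfies $|C_i|\leq |V|/2$, so expansion gives $|E(C_i,V\setminus C_i)|\geq \phi_G(1/2)\,\Delta\,|C_i|$ for each $i$. Summing over all components, each cross-component edge (which must be in $E'$) is counted exactly twice and edges internal to some component contribute nothing, hence $\phi_G(1/2)\,\Delta\,|V|\leq \sum_i |E(C_i,V\setminus C_i)|\leq 2|E'|\leq \theta\Delta|V|$, contradicting $\theta<\phi_G(1/2)$. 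Thus this case cannot occur, and the previous case delivers the lemma.
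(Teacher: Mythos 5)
Your proof is correct. Note that the paper does not actually prove this lemma --- it imports it verbatim from Trevisan's notes --- so there is no in-paper argument to compare against; your write-up is a valid self-contained proof. Both cases check out: the double-counting step correctly counts each cross-component edge (necessarily in $E'$) exactly twice and internal edges not at all, yielding $\phi_G(1/2)\,\Delta\,|V| \leq 2|E'| \leq \theta\Delta|V|$ and hence the desired contradiction when every component has at most $|V|/2$ vertices; and in the remaining case the complement $S = V\setminus C^*$ is a union of components of $H$, so all edges of $E(S, V\setminus S)$ lie in $E'$ and the expansion bound gives exactly the claimed size of $C^*$ (with the trivial sub-case $S=\emptyset$ handled separately, which you may as well mention since the definition of $\phi_G$ requires $|S|>0$).
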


We use Lemma~\ref{lemm:big-component} to establish the existence of a giant component.

\begin{definition}The size of a component of a graph is the number of vertices in the component.
A \emph{giant component} in an $n$-vertex graph is a component whose size is greater than $n/2$.   Given a graph $G=(V,E)$ and a subset $F\subseteq E$,  $G[F]$ denotes the graph $(V,F)$.\end{definition}

\begin{definition}\label{def:eta}
Fix $\Delta \geq 5$. Fix $\delta \in (0,1/2)$ 
satisfying Lemma~\ref{lem:class}. Let  $\eta = \min(\delta/5, 1/100)$.
\end{definition}

\begin{corollary}\label{cor:big}
Fix integers $\Delta\geq 5$ and $K\geq 0$ and a real number $\delta\in(0,1/2)$. Let $G$ be a graph in $ \G_{\Delta,\delta,K}$ and let $\F$ be a configuration in $\Omega^\ord$ or a partial configuration with $|\In(\F)|\geq(1-\eta)|E|$. Then there is a giant component in $G[\In(\F)]$ whose size is at least $ (1-\delta)|V|$.
\end{corollary}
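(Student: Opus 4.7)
The plan is to apply Lemma~\ref{lemm:big-component} directly to the complement edge set of $\In(\F)$, which gives a large component in $G[\In(\F)]$, and then verify that the parameters line up so that the resulting component is big enough to be called a giant component of size $\geq (1-\delta)|V|$.

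First I would set $E' \coloneqq E \setminus \In(\F)$ and note that the hypothesis $|\In(\F)| \geq (1-\eta)|E|$ yields $|E'| \leq \eta |E|$; this works uniformly whether $\F$ is a full configuration in $\Omega^\ord$ (where the inequality is part of Definition~\ref{def:ord}) or a partial configuration satisfying the explicit bound, since in either case edges outside $\In(\F)$ number at most $\eta|E|$. Because $G \in \G_{\Delta,\delta,K} \subseteq G_{\Delta,\delta}$, we have the expansion bound $\phi_G(1/2) \geq 1/10$, and by Definition~\ref{def:eta} the parameter satisfies $\eta \leq 1/100 < 1/10 \leq \phi_G(1/2)$, so the threshold hypothesis $\theta \in (0, \phi_G(1/2))$ of Lemma~\ref{lemm:big-component} is satisfied with $\theta = \eta$.

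Then Lemma~\ref{lemm:big-component} applied to $(V, E \setminus E') = G[\In(\F)]$ produces a component of size at least
\[
\Bigl(1 - \frac{\eta}{2\phi_G(1/2)}\Bigr)|V| \;\geq\; (1 - 5\eta)\,|V| \;\geq\; (1 - \delta)\,|V|,
\]
where the first inequality uses $\phi_G(1/2) \geq 1/10$ and the second uses $\eta \leq \delta/5$ from Definition~\ref{def:eta}. Since $\delta < 1/2$, the size $(1-\delta)|V|$ exceeds $|V|/2$, so this component is by definition a giant component, completing the argument.

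I do not foresee any real obstacle: the whole proof is just a bookkeeping check that the choice of $\eta = \min(\delta/5, 1/100)$ was tailored precisely so that (i) $\eta$ fits strictly below the expansion threshold $\phi_G(1/2) \geq 1/10$ that Lemma~\ref{lemm:big-component} requires, and (ii) the resulting component-size lower bound $1 - \eta/(2\phi_G(1/2))$ is at least $1 - \delta$. The only mild care needed is to treat the partial-configuration case, but since the statement of $G[\In(\F)]$ only depends on the in-edge set and the hypothesis already supplies the same lower bound $|\In(\F)| \geq (1-\eta)|E|$, no separate argument is required.
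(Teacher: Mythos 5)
Your proof is correct and follows essentially the same route as the paper: both apply Lemma~\ref{lemm:big-component} with $\theta=\eta$ and use $\eta=\min(\delta/5,1/100)$ together with $\phi_G(1/2)\geq 1/10$ to get the $(1-\delta)|V|$ bound. (If anything, your choice $E'=E\setminus\In(\F)$ is slightly more careful than the paper's $E'=\Out(\F)$ in the partial-configuration case, where unrevealed edges exist.)
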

\begin{proof}
    Apply Lemma~\ref{lemm:big-component} with $E' = \Out(\F)$ and $\theta = \eta = \min(\delta/5,1/100)$. Note $|\Out(\F)|\leq \eta |E|$ and $\phi_G(1/2)\geq 1/10$. Thus the lemma say that $G[\In(\F)]$ has a component of size at least $\left(1-\frac{\delta / 5}{2\cdot 1/10}\right)|V| = (1 - \delta)|V| > |V|/2$.
\end{proof}

\subsection{Sketch of proof of Theorem~\ref{thm:ordered}} 

Let $\Delta\geq 5$ be an integer.
Consider any sufficiently large $q$ and any
$\beta \geq \beta_c$. 
For sufficiently large~$n$, choose a ``radius'' $r \approx \tfrac{1}{\beta} \log n$ and let
$G=(V,E)\sim \G_{n,\Delta}$. 
Fix a vertex $v\in V$ and an edge $e$ incident to~$v$.
We wish to show, with sufficiently high probability, that $\Vert{\pi_{\B_r^+(v)}(e \mapsto \cdot ) - \pi^{\ord}_G(e\mapsto \cdot) }\Vert_{\tv} \leq  {1}/{(100 |E|)}$. 

Our goal is essentially to
construct a coupling of $\F^+\sim\pi_{\B^+_{r}(v)}$ and $\F^\ord\sim\pi^\ord$, such that  $\Pr(\F^+(e)\neq\F^\ord(e))  $ is sufficiently small. In order to construct the coupling, we take advantage of the fact that $G[B_r(v)]$ is locally tree-like. In fact, we identify a suitable subgraph of~$G[B_r(v)]$ without cycles and restrict the coupling to this subgraph.

Consider a breadth-first search from~$v$ in~$G[B_r(v)]$.
Let $T_0$ be the rooted tree consisting of all forward edges in this breadth-first search.
All other edges in~$B_r(v)$ are called ``excess edges''. 
W.h.p., since $G\sim \G_{n,\Delta}$, there are at most $K$ excess edges in $B_r(v)$ for some absolute constant $K>0$. In particular,
since $G$ is locally tree-like,
we can identify integers   $r_1$ and $r_2$ satisfying $r \geq r_1 > r_2 \geq 0$ such that 
$E(B_{r_1}(v))\setminus E(B_{r_2}(v))$ contains no excess edges and $r_1 - r_2  \geq r/(2K)=\Omega(r)$.
The fact that $r_1-r_2 = \Omega(r)$ ensures that 
$B_{r_1}(v) \setminus B_{r_2}(v)$ is a sufficiently large subgraph of~$G$, and the coupling focuses on this subgraph.

In order to describe the coupling process we need a small amount of notation.
A \emph{partial configuration} $\F$ is a map from the edges of~$G$ to the set $\{0,1,*\}$.
In-edges and out-edges (those that are mapped to~$1$ or to~$0$) are   ``revealed'' and edges that are mapped to~$*$ are ``unrevealed''. 
A refinement of a partial configuration is obtained by revealing more edges.
We use $\F \subseteq \F'$ to denote the fact that $\F'$ refines~$\F$.

In the coupling, we generate a sequence of edge subsets $F_0 \subseteq F_1 \subseteq \cdots \subseteq E$
such that, after iteration~$i$, the edges in~$F_i$ are revealed.
We also construct two sequences of partial configurations 
$\F_0^+\subseteq\F_1^+\subseteq\dots\subseteq\F^+$ and 
$\F_0^\ord\subseteq\F_1^\ord\subseteq\dots\subseteq\F^\ord$, maintaining the invariant that the revealed edges in $\F_i^\ord$ and $\F_i^+$ are exactly the edges in~$F_i$.
The coupling will have the crucial property that $\F^\ord \sim \pi^\ord$ and $\F^+ \sim \pi_{\B_{r_1}^+(v)}$

\begin{itemize}
\item The process starts with iteration $i=0$. The initial set $F_0$ of revealed edges is all edges except those in $E(B_{r_1}(v))$.
In $\F_0^\ord$ these revealed edges are sampled from the projection $\pi^\ord_{F_0}$ of~$\pi^\ord$ onto~$F_0$.
It is likely that the configuration $\F_0^\ord$ has at least $(1-\eta)|E|$ in-edges. If not,
then the coupling terminates (unsuccessfully), generating $\F^\ord$ and $\F^+$ from the right distributions. We will show that the probability of this unsuccessful termination is low.
On the other hand, if $\F_0^\ord$ has a least $(1-\eta)|E|$ in-edges, then we are off to a good start.
All configurations refining $\F_0^\ord$ are in $\Omega^\ord$, so the projection of~$\pi$ and~$\pi^\ord$ onto subsequent edges that get revealed are the same (making it easier to continue the coupling).
At this point $\F_0^+$ is taken to be the configuration with revealed edges~$F_0$ where all revealed edges are in-edges.

\item After iteration $i=0$, iterations continue with $i=1,2,\ldots$ until an edge is revealed whose distance from~$v$ is at most~$r_2$
or until the in-edges in $\F_i^\ord$ induce a giant component, and this giant component contains all vertices on the boundary of~$F_i$. We will show that it is very unlikely that an edge at distance at most~$r_2$ from~$v$ is reached. So it is likely the giant component in~$\F_i^\ord$ contains all vertices on the boundary of~$F_i$.
This is a good situation because the conditional distribution of~$\pi$, conditioned on 
refining~$\F_i^\ord$ and the conditional distribution of~$\F^+$, conditioned on
refining~$\F_i^+$ induce the same distribution on edges incident to~$v$, which enables us to show that
 $\Pr(\F^+(e)\neq\F^\ord(e))  $ is sufficiently small.

\item The process at  iteration~$i+1$ is as follows.
$W_i$ is taken to be the set of all vertices on the boundary of~$F_i$ whose components 
(induced by the in-edges in $\F_i^\ord$) are all small.
By ``boundary'' we mean that vertices in $W_i$ are adjacent to revealed edges, and to unrevealed edges.
If $W_i$ is empty, then the coupling finishes. Otherwise, a vertex $w_i\in W_i$ is chosen to be as far from~$v$ as possible. The edges in the subtree of~$T_0$ below the parent of~$w_i$ are revealed in~$F_{i+1}$.

\end{itemize}

The main remaining ingredient in the proof is showing that the unsuccessful terminations of the coupling are unlikely.  To do this, we use the polymer framework of~\cite{RCM-Helmuth2020}. 
(Ordered) polymers are defined using an inductive definition.
For a  set of edges $A\subseteq E$, let $\B_0(A) = A$, and inductively for $j = 0,1,2,\dots$ define $\B_{j+1}(A)$ to be the set of all edges such that they are either in $\B_j(A)$ or edges that are incident to a vertex that has at least $ {5\Delta}/{9}$ incident edges in $\B_j(A)$. Let $\B_\infty(A) = \bigcup_{j\in\N} \B_j(A)$. 
An ordered polymer of a configuration~$\F$ is a connected component of $B_\infty(\Out(\F))$.
The bulk of the work is to prove the following lemma, which is repeated in Section~\ref{sec:coupling} (with more detail) as Lemma~\ref{lemm:agree-on-edges-of-v-or-large-polymer}.

\begin{lemma}\label{introlem} 
Fix $\Delta\geq 5$ and $K, M>0$.  Suppose that $\beta \geq 3 M$.
Suppose that $n$ is sufficiently large so  that $r:= \tfrac\M\beta \log_{\Delta-1} n >K$
and $|B_{r}(v)| \leq 9 \Delta n/200$. 
Define $r_1$ as above.
Let $\F^\ord$ and $\F^+$ be generated by the process. Then at least one of the following conditions holds.
\begin{enumerate}
\item   $\F^\ord$ and $\F^+$ agree on the edges that are incident to~$v$.
\item   $|\In(\F^\ord)\setminus E(B_{r_1}(v))| < (1-\eta) |E|$.
\item   $\F^\ord$ contains a polymer of size at least $\frac{r}{400\Delta(1+K)}-1$.
\end{enumerate} 
\end{lemma}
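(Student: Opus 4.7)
I would prove the lemma by contradiction: assume neither condition~1 nor condition~2 holds, and derive condition~3. Under the failure of condition~2 we have $|\In(\F^\ord)\setminus E(B_{r_1}(v))|\ge (1-\eta)|E|$, so Corollary~\ref{cor:big} produces a giant component $C^\ord$ in $G[\In(\F^\ord)]$ of size at least $(1-\delta)|V|$. Under the failure of condition~1, the coupling could not have terminated by the giant absorbing every vertex on the boundary of $F_i$: if it had, the conditional distributions of $\pi^\ord$ and $\pi_{\B^+_{r_1}(v)}$ on the unrevealed edges would coincide (both place the boundary of the unrevealed region into a single component), so the remaining edges could be coupled identically and $\F^\ord(e) = \F^+(e)$ for every edge $e$ incident to $v$. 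Consequently the process must have terminated by revealing some edge at distance $\le r_2$ from $v$.

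\textbf{Step~1 (Retreat along a path in $T_0$).} The next step is to fix a root-to-leaf path $P=(v=u_0,u_1,\dots,u_{r_1})$ in $T_0$ along which the boundary retreated from depth $r_1$ down to some depth $d^*\le r_2+1$; such a $P$ exists because some edge inside $B_{r_2}(v)$ was exposed. Because $E(B_{r_1}(v))\setminus E(B_{r_2}(v))$ contains no excess edges, the annular slab containing $P$ is a genuine tree, so each iteration that processes a vertex of $P$ retreats the boundary along $P$ by exactly one depth. This yields retreat vertices $u_{d_1},\dots,u_{d_k}$ with $k\ge r_1-r_2\ge r/(2K)$, and at the moment of its processing each $u_{d_j}$ lay in $W_{i_j}$, i.e.\ its component in $\F^\ord_{i_j}$ (using only the in-edges revealed so far) was small.

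\textbf{Step~2 (From small components to a large polymer).} The crux is to convert this small-component data into polymer membership in $\F^\ord$. I would argue that at each retreat vertex $u_{d_j}$, the failure of its revealed in-edges to reach $C^\ord$, together with the eventual failure of the coupling (so $u_{d_j}$ remains outside $C^\ord$ even after every edge of $\F^\ord$ is revealed), forces a constant-radius ball around $u_{d_j}$ to contain a density of $\Out(\F^\ord)$-edges exceeding the threshold $5\Delta/9$ at every interior vertex. Thus the inductive closure $\B_\infty(\Out(\F^\ord))$ covers a neighbourhood of each $u_{d_j}$. Because consecutive $u_{d_j}$ along $P$ lie in the tree-like annulus and are at graph distance bounded by a constant depending only on $\Delta$ and $K$, the closures at consecutive retreat points overlap, so every $u_{d_j}$ lies in one and the same connected polymer. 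Counting, this polymer has size at least $k/(C'\Delta(1+K))\ge r/(400\Delta(1+K))-1$ for an appropriate absolute constant $C'$, establishing condition~3.

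\textbf{Main obstacle.} The technically delicate part is the deterministic claim in Step~2: at the time $u_{d_j}$ was processed, many of its neighbouring edges were still unrevealed, so the small-component hypothesis only constrains the already-revealed portion of $\F^\ord$. One must show that, on the event where the coupling fails by reaching $B_{r_2}(v)$, the unrevealed in-edges cannot secretly reconnect $u_{d_j}$ to $C^\ord$, and that the resulting geometric picture forces $\ge 5\Delta/9$ out-edges around each $u_{d_j}$ in $\F^\ord$. Verifying this calibration will use $\Delta$-regularity together with the expansion bound $\phi_G(\delta)\ge 5/9$ inherited from the class $\G_{\Delta,\delta,K}$, and it is where the bulk of the combinatorial bookkeeping lives.
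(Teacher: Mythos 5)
There is a genuine gap in Step~2, and it is located exactly where you flag the ``main obstacle'': the claim that, because the coupling fails by reaching $B_{r_2}(v)$, each retreat vertex ``remains outside $C^\ord$ even after every edge of $\F^\ord$ is revealed'' is false, and no amount of calibration will rescue it. The small-component hypothesis at the time a retreat vertex $w_j$ is processed constrains only $\In(\F^\ord_j)$; the edges revealed later can, and in general do, reconnect $w_j$ to the giant component of the final configuration $G[\In(\F^\ord)]$. The paper's proof of this lemma (Lemma~\ref{lemm:agree-on-edges-of-v-or-large-polymer}) is forced into a three-way case split precisely for this reason. Your argument covers only the analogue of Case~1 (no retreat vertex in the deeper half of the path ends up in the giant component), where Item~\ref{item:one} of Lemma~\ref{lem:shortname} puts all edges along the retreat path into a single polymer. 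When some $w_j$ does reconnect, the paper does not try to rule this out; instead (Case~2) it constructs the set $S$ of ``portal'' edges through which every $\In(\F^\ord)$-path from $w_j$ to the old giant component $\kappa_j$ must pass, shows each portal opens into a disjoint subtree $T_{u_1^\gamma}$ contributing at least $r'$ vertices to the component $\kappa'$ of $w_j$ in $G[\In(\F^\ord)\setminus S]$ (so $|V(\kappa')|\geq |S|r'$), and then invokes Items~\ref{item:two} and~\ref{item:three} of Lemma~\ref{lem:shortname} to extract a polymer of size $\Omega(r'/\Delta)$; Case~3 handles reconnection through the retreat path itself via a single cut edge. None of this is present in your outline, and it is the bulk of the proof.

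A second, related problem is the mechanism you propose for polymer membership: you want a constant-radius ball around each $u_{d_j}$ with $\Out(\F^\ord)$-density exceeding $5\Delta/9$ ``at every interior vertex''. That is not how edges enter $\B_\infty(\Out(\F^\ord))$, and it need not hold: a vertex of a small component can have all $\Delta$ incident edges be in-edges, with no out-edges anywhere near it. The correct statement (Item~\ref{item:one} of Lemma~\ref{lem:shortname}) is global: one takes the set $\tau$ of vertices of the small component with some incident edge not yet absorbed into a polymer and uses the expansion bound $\phi_G(\delta)\geq 5/9$ on the cut $(\tau,V\setminus\tau)$ to find a single vertex of $\tau$ with $\geq 5\Delta/9$ incident edges already in $\B_j$, contradicting $\tau\neq\emptyset$. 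Your Step~1 (the retreat path, via the analogue of Observation~\ref{obs:path-from-w_i-to-r_1-distance}) is sound, but Step~2 needs to be replaced wholesale by the case analysis and the cut-set/counting argument.
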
 

To complete the proof of Theorem~\ref{thm:ordered}, we show that items~2 and~3 are unlikely.
The proof that item~2 is unlikely, Lemma~\ref{cor:eee}, follows from the slack specified in Equation~\eqref{eq:margin} of Lemma~\ref{lem:dominant}.
The proof that item~3 is unlikely, Lemma~\ref{cor:bbb}, follows from an analysis on the size of polymers by adapting appropriately the cluster expansion techniques of~\cite{RCM-Helmuth2020} (a bit of extra work is needed there to capture the $1/\beta$ dependence in the size of the polymer, see Lemma~\ref{cor:bbb}).

The proof of WSM for the disordered phase (Theorem~\ref{thm:RRwsmB}) follows a similar strategy but the details are substantially simpler (due to a more straightforward notion of polymers), the argument can be found in Section~\ref{sec:WSMdis}.

\section{Polymers for RC on expander graphs}\label{sec:graph}
All logarithms in this paper are with base~$\emm$. Given a graph $G=(V,E)$, a subset $U\subseteq V$ and a subset $F\subseteq E$,
we use $G[U]$ to denote the subgraph of~$G$ induced by~$U$ and we use $G[F]$ to denote the graph $(V,F)$.

\subsection{The RC model on expanders in the ordered regime}\label{sec:RCstuff}
We use the following Lemma from \cite{RCM-Helmuth2020}.

\begin{lemma}[{\cite[Lemma 8]{RCM-Helmuth2020}}]\label{lemm:rcm-components+edges-bound}\label{lem8}
Let $\Delta\geq 5$,  $\delta\in(0, 1/2)$, and $\zeta\in(0, 1/2)$ be reals. Suppose that $G=(V,E)\in\G_{\Delta,\delta}$ satisfies $|V|\geq 360/(\zeta \delta)$ and $A\subset E$ is such that $\zeta|E| \leq |A| \leq (1-\zeta)|E|$. Then
${c(A)}/{|V|} + {|A|}/{|E|} \leq 1 - \zeta/40$
where $c(A)$ is the number of connected components in $G[A]$.
\end{lemma}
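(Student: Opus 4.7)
The plan is to recast the target inequality as $c(A)\leq|V|(1-\alpha-\zeta/40)$, where $\alpha=|A|/|E|$, and to bound $c(A)$ using the expansion of $G$. Writing $T=|A|-|V|+c(A)\geq 0$ for the tree-excess of $(V,A)$, the identity
\[
c(A)/|V|+\alpha=1-(\Delta-2)\alpha/2+T/|V|
\]
shows that when $A$ is a forest (i.e., $T=0$), the bound already holds with slack $(\Delta-2)\alpha/2\geq 3\zeta/2$ for $\Delta\geq 5$ and $\alpha\geq \zeta$, far exceeding the required $\zeta/40$. The task thus reduces to controlling the cyclomatic contribution $T/|V|$ once $A$ contains cycles.

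The main step is a per-component expansion bound. Partition the components of $(V,A)$ by size into \emph{tiny} ($|S|\leq \delta|V|$), \emph{medium} ($\delta|V|<|S|\leq |V|/2$), and \emph{large} ($|S|>|V|/2$; at most one). Since each $S$ is a component of $(V,A)$, its full $G$-edge boundary $\partial_G S$ lies in $\overline{A}:=E\setminus A$; combined with $\phi_G(\delta)\geq 5/9$ this forces $a_S\leq (2/9)\Delta|S|$ for tiny $S$, and $\phi_G(1/2)\geq 1/10$ gives $a_S\leq (9/20)\Delta|S|$ for medium $S$. Summing the per-component quantities $f_S:=1+2a_S/\Delta$ (whose total equals $(\alpha+c(A)/|V|)|V|$), and using that tiny non-singleton components have at least two vertices each while medium ones have at least $\delta|V|$ each, yields a bound of the form
\[
\alpha+c(A)/|V|\leq 1-\Omega\!\left(\tfrac{N_{\text{tiny ns}}+N_{\text{med}}}{|V|}\right)+O\!\left(\tfrac{1}{\delta|V|}\right),
\]
with the additive error absorbed via $|V|\geq 360/(\zeta\delta)$.

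The main obstacle is the degenerate configuration in which every non-giant component is a singleton, so that $N_{\text{tiny ns}}=N_{\text{med}}=0$ and the preceding bound only gives $\alpha+c(A)/|V|\leq 1+O(1/|V|)$. For this I would bound the singleton set $T$ as a single vertex subset: since no A-edge is incident to $T$, every edge of $G[T]$ and every edge in $\partial_G T$ lies in $\overline{A}$, and combining the degree identity $\Delta|T|=2e_G(T)+|\partial_G T|$ with expansion $|\partial_G T|\geq \phi\,\Delta|T|$ gives $|T|\leq (1-\alpha)|V|/(1+\phi)$, a strict improvement on the naive $|T|\leq (1-\alpha)|V|$. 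Applying this with $\phi\geq 5/9$ when $|T|\leq \delta|V|$ (enforced in the high-$\alpha$ regime by Lemma~\ref{lemm:big-component}) and $\phi\geq 1/10$ otherwise yields $\alpha+c(A)/|V|\leq 1-\kappa\zeta$ for some absolute $\kappa\geq 1/11$. The delicate accounting point is that $T$ and the non-singleton tiny set $U_{\text{ns}}$ both draw on the same $|\overline{A}|$-budget, so rather than applying expansion to them separately, the cleanest route is to apply it to their union $U=T\cup U_{\text{ns}}$ directly; a short case split on $\alpha$ (close to $1-\zeta$, moderate, or close to $\zeta$) and on $|U|$ (against $\delta|V|$ and $|V|/2$) then handles every regime and produces the claimed $\zeta/40$ slack.
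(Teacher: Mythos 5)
The paper does not actually prove this lemma: it is imported verbatim from \cite[Lemma 8]{RCM-Helmuth2020}, with only the remark that the original proof (written for $\zeta=\min(1/100,\delta/5)$) goes through for all $\zeta<1/2$. So your proposal is being compared against a citation, not an in-paper argument; what you have written is a self-contained proof in the spirit of the Helmuth--Jenssen--Perkins original, and its ingredients all check out. The identity $\sum_S f_S = (\alpha+c(A)/n)n$ with $f_S=1+2a_S/\Delta$ is correct, as are the expansion consequences $a_S\le \tfrac{2}{9}\Delta|S|$ (tiny) and $a_S\le\tfrac{9}{20}\Delta|S|$ (medium), the absorption of the per-component $+1$ using $|S|\ge 2$ resp.\ $|S|>\delta n$, and the edge-count $|T|\le (1-\alpha)n/(1+\phi)$ for the singleton set. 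Your use of \emph{both} ends of the hypothesis $\zeta|E|\le|A|\le(1-\zeta)|E|$ is exactly right: the lower bound feeds the forest/tree-excess slack, the upper bound handles the near-spanning giant.

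Two remarks on the assembly you leave implicit. First, the ``same $\overline{A}$-budget'' worry and the proposal to apply expansion to $U=T\cup U_{\mathrm{ns}}$ are unnecessary: the tiny-non-singleton gain comes from per-component expansion (bounding $a_S$), not from the $|\overline{A}|$ count, so there is no double-counting, and a cleaner split closes every case. Concretely: if a component $S_0$ with $|S_0|>n/2$ exists, either $|R|:=n-|S_0|\ge \zeta n/4+10$, in which case $f_{S_0}\le 1+|S_0|-|R|/10$ (expansion applied to $R$) plus the tiny/medium gains give the bound, or $|R|<\zeta n/4+10$, in which case the trivial $c(A)\le 1+|R|$ together with $\alpha\le 1-\zeta$ suffices; if no such $S_0$ exists, then $|T|\le(1-\alpha)n\le(1-\zeta)n$ (even the crude count, with no expansion term) forces $|U_{\mathrm{ns}}|+|W|\ge\zeta n$, and the per-component gain of at least $\tfrac{1}{18}$ per vertex there already beats $\zeta n/40$. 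Second, be careful that $\phi_G(1/2)$ only controls sets of size at most $n/2$, so you cannot apply it to $T$ when $|T|>n/2$; that regime only arises in the no-giant case and is exactly where the crude bound above is what you want. With these points made explicit the constants clear $\zeta/40$ comfortably under $n\ge 360/(\zeta\delta)$, so the proof is complete.
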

\begin{remark}
\cite[Lemma 8]{RCM-Helmuth2020} is proved for the special case of $\zeta = \min(1/100,\delta/5)$, however the proof works for any positive $\zeta<1/2$.
\end{remark}

\begin{lemma}[{\cite[Theorems 1 and 2]{RCM-Helmuth2020}}]\label{lem:slack} 
Let $\Delta \geq 5$ be an integer. Fix $\delta \in (0,1/2)$ satisfying Lemma~\ref{lem:class}. Let $\eta = \min \{\delta/5,1/100\}$.
Let $q_0  \geq e^{21 \Delta/\eta}$ be sufficiently large that 
Theorems~1 and~2 of~\cite{RCM-Helmuth2020} hold. There is a positive number $\zeta < \eta$ such that 
w.h.p. for $G\sim \G_{n,\Delta}$, it holds that 
\begin{equation*} 
\begin{aligned}
\mbox{for $\beta\geq \beta_c$}, &\quad  \pi_G^\ord\Big(|\In(\F)|\leq (1-\zeta)|E|\Big)=\emm^{-\Omega(n)},\\
\mbox{for $\beta\leq \beta_c$}, &\quad  \pi_G^\dis\Big(|\In(\F)|\geq \zeta|E|\Big)=\emm^{-\Omega(n)}.
\end{aligned}
\end{equation*} 
\end{lemma}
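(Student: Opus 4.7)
The plan is to bound, for each configuration~$\F$ in the slack window (either $|\In(\F)|\in[(1-\eta)|E|,(1-\zeta)|E|]$ in the ordered case or $|\In(\F)|\in[\zeta|E|,\eta|E|]$ in the disordered case), the weight ratio $w_G(\F)/w_G(\F_{\pm})$ to an extremal reference configuration — the all-in $\F_{+}$ in the ordered case and the all-out $\F_{-}$ in the disordered case — and to show this ratio is $\emm^{-\Omega(n)}$ with a margin large enough to absorb the entropy factor $|\Omega|\leq 2^{|E|}=2^{\Delta n/2}$ in a union bound. The key structural input is Lemma~\ref{lemm:rcm-components+edges-bound}, which in the normalisation $|V|=n$, $|E|=\Delta n/2$ gives $c(\F)\leq (1-\zeta/40)n-(2/\Delta)|\In(\F)|$ for every~$\F$ in the slack window; the prerequisite $\zeta<1-\eta$ is automatic since $\zeta<\eta\leq 1/100$. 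The hypotheses $G\in\G_{\Delta,\delta}$ and $n\geq 360/(\zeta\delta)$ hold w.h.p.\ for $G\sim\G_{n,\Delta}$ by Lemma~\ref{lem:class} and for all sufficiently large~$n$.

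For the ordered case $\beta\geq\beta_c$, plugging the bound on $c(\F)$ into $\log(w_G(\F)/w_G(\F_{+}))=(c(\F)-1)\log q+(|\In(\F)|-|E|)\log(\emm^\beta-1)$ shows this log-ratio is an affine function of $|\In(\F)|$ with slope $\tau:=\log(\emm^\beta-1)-(2/\Delta)\log q\geq 0$ for $\beta\geq \beta_c$; its maximum over the window is thus at $|\In(\F)|=(1-\zeta)|E|$ and a short computation yields
\[\log\frac{w_G(\F)}{w_G(\F_{+})}\leq n\zeta\Bigl[\tfrac{39}{40}\log q-\tfrac{\Delta}{2}\log(\emm^\beta-1)\Bigr]-\log q.\]
The exact formula $\beta_c=\log\tfrac{q-2}{(q-1)^{1-2/\Delta}-1}$ (paper's footnote) gives $\tfrac{\Delta}{2}\log(\emm^{\beta_c}-1)=\log q-O_q(1)$, so the bracket is at most $-\tfrac{1}{41}\log q$ for $q$ sufficiently large, and is only more negative for $\beta>\beta_c$. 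Choosing $\zeta=\zeta(\Delta)\in(0,\eta)$ close enough to~$\eta$ (e.g.\ $\zeta=0.7\eta$), the log-ratio is bounded by $-c(\Delta)n\log q$ with $c(\Delta)\log q>\tfrac{\Delta}{2}\log 2$ whenever $q\geq q_0=\emm^{21\Delta/\eta}$. Since $\sum_{\F\in\Omega^{\ord}}w_G(\F)\geq w_G(\F_{+})$, summing over the at most $2^{|E|}$ configurations in the window gives the desired $\emm^{-\Omega(n)}$ bound.

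The disordered case is symmetric: compare to $\F_{-}$ with $w_G(\F_{-})=q^n$ and note $\tau\leq 0$ for $\beta\leq\beta_c$, so the maximum within the window is now at the lower endpoint $|\In(\F)|=\zeta|E|$, and the same arithmetic produces the same exponential savings. The main delicate point in both directions is the tight control of $\tfrac{\Delta}{2}\log(\emm^{\beta_c}-1)$ at criticality: we must verify that the $1/40$ slack supplied by Lemma~\ref{lemm:rcm-components+edges-bound} beats the $\log 2$ per-edge entropy cost, and this is what forces $q_0$ to depend on $\Delta$ exactly as in the hypothesis. Since we are allowed to take $q$ large (depending on~$\Delta$) and to choose $\zeta=\zeta(\Delta)$ accordingly, both quantitative requirements are met simultaneously.
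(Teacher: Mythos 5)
Your argument is correct in substance and shares the paper's core mechanism (Lemma~\ref{lemm:rcm-components+edges-bound} applied per configuration, then a union bound over the at most $2^{|E|}$ configurations in the slack window), but it takes a genuinely different normalisation. The paper bounds $w_G(\F)/Z$ using $Z\geq z^n$ with $z=\max\{q,(\emm^\beta-1)^{\Delta/2}\}$, and must then \emph{separately} prove $Z/Z^{\ord}=O(1)$ by invoking item~3 of Theorem~1 and items~1 and~3 of Theorem~2 of \cite{RCM-Helmuth2020}; this second step occupies the last third of the paper's proof. You instead normalise by the extremal weight, using $Z^{\ord}\geq w_G(\F_{+})$ and $Z^{\dis}\geq w_G(\F_{-})$, which eliminates that step and makes the proof more self-contained. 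The price is that at $\beta=\beta_c$ one has $w_G(\F_{+})=q\,(\emm^{\beta_c}-1)^{|E|}$, which is smaller than $q^n\leq Z$ by a factor $\emm^{O(nq^{-2/\Delta})}$; this must be (and is) absorbed by the $\tfrac{\zeta}{40}n\log q$ savings, whose per-vertex rate is bounded below by a constant for $q\geq q_0$.

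There is one sign error you should fix. You assert that $\tau:=\log(\emm^\beta-1)-\tfrac{2}{\Delta}\log q\geq 0$ for all $\beta\geq\beta_c$ and accordingly evaluate the affine upper bound at the endpoint $|\In(\F)|=(1-\zeta)|E|$. In fact the exact formula gives $\emm^{\beta_c}-1=(q-1)^{2/\Delta}\cdot\tfrac{1-(q-1)^{-2/\Delta}}{1-(q-1)^{-(1-2/\Delta)}}<(q-1)^{2/\Delta}<q^{2/\Delta}$ for $\Delta\geq 5$, so $\tau(\beta_c)<0$ and the maximum over the window is attained at the \emph{other} endpoint $|\In(\F)|=(1-\eta)|E|$ whenever $\beta$ is at or just above $\beta_c$. (Your disordered case, where you correctly take $\tau\leq 0$ for $\beta\leq\beta_c$, is internally inconsistent with $\tau(\beta_c)\geq 0$.) The slip is quantitatively harmless: $|\tau(\beta_c)|=O(q^{-2/\Delta})$, so evaluating at the wrong endpoint costs only an additive $n(\eta-\zeta)\tfrac{\Delta}{2}|\tau|=O(nq^{-2/\Delta})$ in the exponent, which is negligible against the main term $-\tfrac{\zeta}{40}n\log q$. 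But as written your displayed inequality is not a valid upper bound for every $\F$ in the window near criticality; replace it by the maximum of the two endpoint evaluations, or bound the term linear in $|\In(\F)|$ by $n\eta\max\{0,\log q-\tfrac{\Delta}{2}\log(\emm^\beta-1)\}=O(nq^{-2/\Delta})$ uniformly over $\beta\geq\beta_c$.
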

\begin{proof}
We prove the first bound, the second is similar.
Choose $\zeta = 20 \Delta/\log(q_0)$. Note that $\zeta < \eta$, as desired.
Let  $\Omega' = \{\F \in \Omega^\ord: |\In(\F)|\leq (1-\zeta)|E|\}$ and let 
$$Z' = \sum_{\F \in \Omega'} w_G(\F) = \sum_{\F\in \Omega'} q^{c(\F)} (\emm^\beta - 1)^{|\In(\F)|}.$$
Let $z = \max\{q,(\emm^{\beta}-1)^{\Delta/2}\}$. Since $Z \geq z^n$ every $\F\in \Omega$ has
$$w_G(\F)/Z \leq z^{c(\F) + 2|\In(\F)|/\Delta  - n} = z^{n(c(\F)/n + |\In(\F)|/|E|  - 1)}
.$$
Since $\zeta < \eta < 1/2$,  for any
$\F\in \Omega'$, 
$\zeta |E| \leq (1-\eta)|E| \leq |\In(\F)| \leq (1-\zeta)|E|$.
By Lemma~\ref{lem:class},   w.h.p., $G\in \mathcal{G}_{\Delta,\delta}$.
By Lemma~\ref{lem8}, for $n\geq 360/(\zeta \delta)$, we have
$c(\F)/n + |\In(\F)|/|E| \leq 1-\zeta/40$ so
$Z'/Z \leq 2^{n\Delta/2} z^{n(-\zeta/40)}$, which gives 
$Z'/Z = \emm^{-\Omega(n)}$  since
$q > \emm^{20 \Delta/\zeta}$. 
To finish, we will show that $Z/Z^\ord = O(1)$, so
that $\frac{Z'}{Z^\ord} = \frac{Z'}{Z} \times \frac{Z}{Z^\ord} = \emm^{-\Omega(n)} O(1) = \emm^{-\Omega(n)}$.
If $\beta > \beta_c$, item~3 of \cite[Theorem 1]{RCM-Helmuth2020}  says that there are positive numbers~$n_0$ and~$\xi$ such that, for $n\geq n_0$,
$(1/n) \log ((Z-Z^\ord)/Z) \leq -\xi$.
So for $n\geq n_0$,  $Z^\ord/Z \geq 1 - e^{-\xi n}$, which is at least~$1/2$ for $n\geq 1/\xi$.
If $\beta = \beta_c$,  
then items~1 and~3 of~\cite[Theorem 2]{RCM-Helmuth2020} show that 
$Z^\ord/Z$ converges in distribution to a random variable $Q/(Q+1)$ where (say) $Q\geq q/2$ for $q\geq q_0$. So for sufficiently large~$q$, $Z^\ord/Z = O(1)$.
 \end{proof}

\subsection{Disordered polymers}

We will use \textit{disordered polymers} from \cite[Section~2.3]{RCM-Helmuth2020}. 
Given a graph $G = (V,E)$ and a configuration~$\F\in \Omega^\dis$, a \textit{disordered polymer} of~$\F$ is a connected component of $G[\In(\F)]$.
Since $\F \in \Omega^\dis$, each such connected component contains at most $\eta|E|$ edges.

Each disordered polymer $\gamma=(V(\gamma),E(\gamma))$ has an associated weight given by $ w_\gamma^\dis := q^{1-|V(\gamma)|} (e^{\beta}-1)^{|E(\gamma)|}
$.  We observe the following relation between the weight of a disordered configuration and the weights of its polymers.
\begin{observation}\label{obs:obs}
For $\F\in\Omega^\ord$, let $\Gamma^\dis(\F)$ be the set of disordered polymers of $\F$. Then
$$w_G(\F) = q^n \prod_{\gamma\in\Gamma^\dis(\F)} w_\gamma^\dis.$$
\end{observation}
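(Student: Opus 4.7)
The plan is to prove the identity by a direct bookkeeping argument, matching the exponents of $q$ and of $e^\beta-1$ on both sides. The key point is that the disordered polymers $\gamma\in\Gamma^\dis(\F)$ are, by definition, the connected components of the spanning subgraph $G[\In(\F)]=(V,\In(\F))$, and so the vertex sets $\{V(\gamma)\}_{\gamma\in\Gamma^\dis(\F)}$ partition $V$ while the edge sets $\{E(\gamma)\}_{\gamma\in\Gamma^\dis(\F)}$ partition $\In(\F)$ (with isolated vertices counted as singleton polymers contributing weight $1$).

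From this partition I would immediately extract three identities: $\sum_{\gamma}|V(\gamma)|=n$, $\sum_{\gamma}|E(\gamma)|=|\F|$, and $|\Gamma^\dis(\F)|=c(\F)$, the last one simply because the polymers enumerate the connected components. Substituting into the product,
\[
\prod_{\gamma\in\Gamma^\dis(\F)} w_\gamma^\dis = \prod_{\gamma\in\Gamma^\dis(\F)} q^{1-|V(\gamma)|}(e^\beta-1)^{|E(\gamma)|} = q^{c(\F)-n}(e^\beta-1)^{|\F|},
\]
and multiplying by $q^n$ recovers the definition $w_G(\F)=q^{c(\F)}(e^\beta-1)^{|\F|}$ from the introduction.

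There is no real obstacle here; the only subtlety worth spelling out is the convention that every vertex of $V$ (including those isolated in $G[\In(\F)]$) belongs to exactly one disordered polymer, so the partition of $V$ is genuine and the product telescopes cleanly. The note that $\F\in\Omega^\dis$ (rather than $\Omega^\ord$ as written) plays no role in the algebraic identity itself; it is only used implicitly via the size constraint $|E(\gamma)|\leq\eta|E|$ that makes the polymer representation useful in subsequent cluster-expansion arguments.
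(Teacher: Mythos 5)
Your proof is correct and is exactly the paper's argument: the observation is justified in the text by noting that the polymers' vertex sets partition $V$, their edge sets partition $\In(\F)$, and each polymer is a component of $G[\In(\F)]$, after which the product telescopes to $q^{c(\F)-n}(e^\beta-1)^{|\F|}$ just as you compute. Your remark that the hypothesis should read $\F\in\Omega^\dis$ (a typo in the statement) and that singleton polymers contribute weight $1$ is also consistent with the paper's conventions.
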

This observation follows from the fact each vertex of $G$ is contained in some polymer of $\F$, each in-edge of $\F$ is an edge of some polymer of $\F$, and each polymer of $\F$ corresponds to one component of $G[\In(\F)]$. 

Based on the work by Helmuth, Jenssen and Perkins \cite[Proposition 11]{RCM-Helmuth2020}, we can upper bound the probability that $\F\sim\pi^\dis$ contains a large polymer.

\begin{lemma}\label{lemm:polymer-weight-disordered}
Let $\Delta \geq 5$ and $K\geq 0$ be integers, $\delta\in(0, 1/2)$ be real. Let $C$ be a constant. Then, for all sufficiently large $q$, the following holds for all $\beta \leq \log(q^{2.1/\Delta}+1)$. Let $n$ be sufficiently large. For $G\in\G_{\Delta,\delta,K}$ with $n=|V(G)|$, for $s:=C \log n$,
$$\pi^{\dis}_G\big(\mbox{$\F$ contains a polymer with at least $s$ edges}\big)\leq n^{-3/2}/2.$$
\end{lemma}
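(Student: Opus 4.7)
The plan is to exploit the disordered polymer representation (Observation~\ref{obs:obs}) to write
\[
Z^{\dis}_G = q^n \sum_{\mathcal{A}} \prod_{\gamma \in \mathcal{A}} w_\gamma^{\dis},
\]
where $\mathcal{A}$ ranges over collections of pairwise vertex-disjoint disordered polymers whose total edge count is at most $\eta|E|$, and then bound the marginal appearance probability of a fixed polymer under $\pi^{\dis}_G$ by a cluster-expansion / Kotecký--Preiss argument. The bulk of the work is to show the KP-type marginal bound $\pi^{\dis}_G(\gamma^* \in \F) \leq O(w_{\gamma^*}^{\dis})$ for every polymer $\gamma^*$. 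Assuming this, a straightforward union bound reduces the lemma to a geometric estimate:
\[
\pi^{\dis}_G\bigl(\exists\,\gamma \in \F: |E(\gamma)| \geq s\bigr) \leq \sum_{v \in V}\sum_{k \geq s} \sum_{\substack{\gamma^* \ni v\\|E(\gamma^*)| = k}} O\bigl(w_{\gamma^*}^{\dis}\bigr).
\]

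For the quantitative estimate, I would bound a single polymer's weight by its tree-excess parameter. If $\gamma^*$ has $k$ edges and tree-excess $t$, then $|V(\gamma^*)| = k+1-t$, so
\[
w_{\gamma^*}^{\dis} = q^{t-k}(e^{\beta}-1)^k \leq q^t \bigl((e^{\beta}-1)/q\bigr)^k \leq q^t \cdot q^{-k(1-2.1/\Delta)},
\]
using $\beta \leq \log(q^{2.1/\Delta}+1)$. Combined with the standard entropy bound that the number of connected edge-subgraphs of $G$ of size $k$ through a given vertex is at most $(e\Delta)^k$, this yields $\sum_{\gamma^* \ni v,\, |E(\gamma^*)|=k} w_{\gamma^*}^{\dis} \leq q^{t_{\max}(k)} \cdot (e\Delta \cdot q^{-(1-2.1/\Delta)})^k$. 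For $\Delta \geq 5$ and sufficiently large $q$ the exponential base in $k$ is strictly less than $1$; summing over $k\geq s$ and over the $n$ choices of $v$ gives a bound of the form $n \cdot \rho^{s}$ for some $\rho \in (0,1)$, and choosing the constant $C$ in $s = C\log n$ large enough relative to $\log(1/\rho)$ yields the desired bound $n^{-3/2}/2$.

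The main obstacle is controlling the tree-excess factor $q^{t_{\max}(k)}$: for polymers of diameter at most $\tfrac{1}{3}\log_{\Delta-1}n$, the locally $K$-treelike hypothesis $G \in \G_{\Delta,\delta,K}$ forces $t \leq K$, contributing only a harmless constant $q^K$ that is dominated by the geometric decay. For polymers of larger diameter, however, the tree excess can in principle grow with $k$, and one must instead exploit the expansion $\phi_G(1/2) \geq 1/10$ and $\phi_G(\delta)\geq 5/9$ to show that such ``spread-out'' polymers are combinatorially scarce enough that the count-versus-weight trade-off still works. This is precisely the content of the cluster-expansion convergence analysis of~\cite[Proposition~11]{RCM-Helmuth2020}, and the argument applies here because our regime $\beta \leq \log(q^{2.1/\Delta}+1)$ lies inside the range of validity of that analysis (adjusting the $2/\Delta$ exponent they use to $2.1/\Delta$ merely changes the constants). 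Importing their KP verification and plugging in $s = C\log n$ with $C$ chosen large enough in terms of $\Delta$ then completes the proof.
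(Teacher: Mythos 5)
Your overall strategy matches the paper's: bound the probability that a fixed disordered polymer $\gamma$ appears under $\pi^\dis_G$ by its weight $w_\gamma^\dis$, then union-bound over all polymers with at least $s$ edges through each vertex, deferring the convergence/summability estimate to \cite[Proposition~11]{RCM-Helmuth2020}. Two small remarks on the ingredients. First, the marginal bound $\pi^\dis_G(\gamma\in\Gamma^\dis(\cdot))\leq w_\gamma^\dis$ does not need any Kotecký--Preiss machinery: the paper gets it by an elementary removal argument (setting $\In(\F')=\In(\F)\setminus E(\gamma)$ multiplies the weight by exactly $1/w_\gamma^\dis$, so $Z^\dis\geq(1+1/w_\gamma^\dis)\sum_{\F\in\Omega_\gamma}w_G(\F)$). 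Second, your tree-excess/subgraph-counting discussion and your worry about dense polymers is legitimate, but in the end you defer to exactly the same external verification the paper does, namely the KP condition \cite[Equation~(13)]{RCM-Helmuth2020}, which already packages the expansion-based control of $|V(\gamma)|$ versus $|E(\gamma)|$; so this part is fine.

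The one genuine flaw is the closing step: you propose ``choosing the constant $C$ in $s=C\log n$ large enough relative to $\log(1/\rho)$.'' But $C$ is \emph{given} in the statement --- it is an arbitrary, possibly very small, constant, and the lemma is in fact later invoked with $C=\tfrac{1}{3(2+K)\log(\Delta-1)}$ in the proof of Theorem~\ref{thm:RRwsmB}. You are not free to enlarge it. The repair is available from your own estimates: the quantifier order is ``for all $C$, for all sufficiently large $q$,'' and the per-edge decay rate $\log(1/\rho)\approx(1-2.1/\Delta)\log q$ grows with $q$, so for any fixed $C>0$ one takes $q$ large enough (depending on $C$ and $\Delta$) that $C\log(1/\rho)\geq 5/2$, whence $n\rho^{C\log n}\leq n^{-3/2}/2$. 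This is exactly how the paper finishes, using the boosted condition $\sum_{\gamma\ni v}e^{(1+\rho)|E(\gamma)|}w_\gamma^\dis\leq\tfrac12$ with $\rho=\log(q)/(4\Delta)$ and requiring $C(1+\rho)-1\geq 3/2$. As written, your argument proves the lemma only for large $C$, which is not the statement; with the quantifier corrected, it goes through.
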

\begin{proof}
Let $q$ and $n$ be large enough so that Proposition 11 from \cite{RCM-Helmuth2020} applies, and so in particular \cite[Equation (13)]{RCM-Helmuth2020} holds, and also that $s > 1$ and $q\geq e^{4\Delta(\frac{5}{2C}-1)}$.

For a disordered configuration $\F$, let $\Gamma^\dis(\F)$ be the set of disordered polymers of $\F$. 
 
First we will bound, for a particular polymer $\gamma$ with $|E(\gamma)| \geq 1$, $\Pr(\gamma\in\Gamma^\dis(.))$. Suppose $\F\in\Omega^\dis$ has $\gamma\in\Gamma^\dis(\F)$. By Observation~\ref{obs:obs}, 
$w_G(\F) = q^n \prod_{\gamma\in\Gamma^\dis(\F)} w_\gamma^\dis$.
Let $\F'$ be the (disordered) configuration defined by $\In(\F') = \In(\F)\backslash E(\gamma)$. Since $E(\gamma)\neq\emptyset$, $\F\neq\F'$. By construction,
$\F'\in\Omega^\dis$. Since vertices in~$\gamma$ are not incident to edges in $\In(\F')$, $\Gamma^\dis(\F') = (\Gamma^\dis(\F)\setminus\{\gamma\})\cup_{v\in\gamma} \{v\}$.
A polymer~$\gamma'$ consisting of a single vertex has weight $w_{\gamma'}^\dis = 1$.
Thus, $w_G(\F') = w_G(\F)/w_\gamma^\dis$.
Let $\Omega_\gamma$ be the set of all disordered configurations containing $\gamma$ as a polymer.
Then 
$Z^\dis \geq \sum_{\F\in\Omega_\gamma} (w_G(\F) + w_G(\F')) = (1 + 1/w_\gamma^\dis)\sum_{\F\in\Omega_\gamma} w_G(\F)$.
Thus, the probability that $\F\sim\pi^\dis$ contains the polymer~$\gamma$ is at most $\frac{1}{1 + 1/w_\gamma^\dis} \leq w_\gamma^\dis$.

 Next,  for $v\in V(G)$ 
 let $p_v$ be the probability that $\F\sim \pi_G^\dis$ has a polymer with at least $s$ edges containing the vertex~$v$ vertex~$v$.  By a union bound, $p_v \leq P_v := \sum_{\gamma\ni v:  |E(\gamma)|\geq s} w_\gamma^\dis$.
 Let $\rho = \log(q)/(4 \Delta)$.
By   \cite[Equation (13)]{RCM-Helmuth2020},
$$ \sum_{\gamma\ni v: |\gamma|>1} e^{(1+\rho)|E(\gamma)|} w_\gamma^\dis \leq \tfrac{1}{2}.$$
Thus also
$$e^{(1+\rho)s} P_v = 
e^{(1+\rho)s}\sum_{\gamma\ni v: |E(\gamma)|\geq s} w_\gamma^\dis \leq \sum_{\gamma\ni : |E(\gamma)|\geq s} e^{(1+ \rho)|E(\gamma)|} w_\gamma^\dis \leq \tfrac 12.$$
Hence $P_v \leq (1/2) e^{-(1+\rho)s}$. By a union bound over all vertices, the probability in the statement of the lemma is at most $\sum_v P_v \leq (n/2) 2^{-(1+\rho)s}= 
(1/2) n^{1-C(1+\rho)}
$.  The lemma follows by requiring $q$ to be sufficiently large with respect to~$\Delta$ and~$C$ that $C(1+\rho) -1 \geq 3/2$.
 
\end{proof}

\subsection{Ordered polymers}

 We use the definition of \textit{ordered polymers} from \cite[Section~2.4.2]{RCM-Helmuth2020}. Let $G=(V,E)$ be a graph of max degree $\Delta$ with $n=|V(G)|$ and $m=|E(G)|$. For a  set of edges $A\subseteq E$, Let $\B_0(A) = A$, and inductively for $i = 0,1,2,\dots$ define $\B_{i+1}(A)$ to be the set of all edges such that they are either in $\B_i(A)$ or edges that are incident to a vertex that has at least $ {5\Delta}/{9}$ incident edges in $\B_i(A)$. Let $\B_\infty(A) = \bigcup_{i\in\N} \B_i(A)$. It is shown in \cite[Lemma 12]{RCM-Helmuth2020} that $|\B_\infty(A)|\leq 10|\B_0(A)|$.

 An ordered polymer is a connected subgraph $\gamma=(V(\gamma),E(\gamma))$ of $G$ together with an edge configuration $\ell: E(\gamma)\rightarrow \{0,1\}$ subject to: (i) $\Out(\ell)\leq \eta m$, and (ii) $\B_{\infty}(\Out(\ell))=E(\gamma)$. We let $E_{u}(\gamma)$ be the set of unoccupied edges $\Out(\ell)$ of the polymer and $c'(\gamma)$ be the number of components in the graph $G[E\backslash E_{u}(\gamma)]$ with fewer than $n/2$ vertices. The size of $\gamma$ is defined as $|E_\gamma|$, whereas the weight of $\gamma$ is defined as $w_\gamma^\ord := q^{c'(\gamma)} (e^\beta-1)^{-E_u(\gamma)}$.

 Given a (partial) configuration $\F$, the set of ordered polymers of $\F$, denoted by $\Gamma(\F)$, consists of the connected components of $G[\B_\infty(\Out(\F))]$, each with the labelling on the edges induced by the corresponding assignment in $\F$.

We will use a couple of polymer properties from \cite{RCM-Helmuth2020}. First, we note the following connection between the weight of a configuration $\F$ to the weight of its polymers $\Gamma(\F)$.

\begin{lemma}\label{lemm:polymer-weight-configuration-weight}  
Fix $\Delta\geq 5$, $K\geq 0$ and $\delta\in(0, 1/2)$. Let $G\in\G_{\Delta,\delta,K}$ and $\F\in\Omega^\ord$.  Then
$w_G(\F) = q(\emm^\beta - 1)^{|E|} \prod_{\gamma\in\Gamma(\F)} w_\gamma^\ord$.
\end{lemma}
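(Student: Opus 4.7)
Plugging in $w_G(\F)=q^{c(\F)}(\emm^\beta-1)^{|\In(\F)|}$ and $w_\gamma^\ord=q^{c'(\gamma)}(\emm^\beta-1)^{-|E_u(\gamma)|}$ reduces the identity to matching the $(\emm^\beta-1)$ exponents (i.e.\ $|\In(\F)|=|E|-\sum_\gamma|E_u(\gamma)|$) and the $q$ exponent (i.e.\ $c(\F)=1+\sum_{\gamma\in\Gamma(\F)} c'(\gamma)$). The $(\emm^\beta-1)$ part is routine: polymers are the connected components of $G[\B_\infty(\Out(\F))]$, so they are pairwise vertex-disjoint and their edge sets partition $\B_\infty(\Out(\F))$; since $\Out(\F)\subseteq\B_\infty(\Out(\F))$, every out-edge of $\F$ belongs to a unique polymer and hence lies in that polymer's $E_u(\gamma)=E(\gamma)\cap\Out(\F)$. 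Therefore $\Out(\F)=\bigsqcup_\gamma E_u(\gamma)$, yielding $|\In(\F)|=|E|-\sum_\gamma|E_u(\gamma)|$ as required.

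For the $q$ exponent I would use Corollary~\ref{cor:big}: since $\F\in\Omega^\ord$, the graph $H:=G[\In(\F)]$ has a unique giant component $C_\ast$ with $|C_\ast|\geq(1-\delta)n$, and all other components of $H$ have fewer than $n/2$ vertices. For each polymer $\gamma$, the graph $G_\gamma:=G[E\setminus E_u(\gamma)]$ contains $H$ as a subgraph, so it too has a unique giant component, and $c'(\gamma)$ is exactly the number of small (i.e.\ fewer than $n/2$ vertex) components of $G_\gamma$. Writing $s(H)$ for the number of small components of $H$, the identity $c(\F)-1=\sum_\gamma c'(\gamma)$ is equivalent to producing a bijection between the small components of $H$ and $\bigsqcup_\gamma\{\text{small components of }G_\gamma\}$, sending each component to itself as a vertex subset.

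The whole bijection rests on the structural claim that every small component of $H$ is contained in the vertex set $V_\gamma$ of a unique polymer $\gamma$. Uniqueness is immediate (polymer vertex sets are disjoint). For existence, the key observation is that a vertex $v\in V_0:=V\setminus\bigcup_\gamma V_\gamma$ has no incident edge in $\B_\infty(\Out(\F))$ and in particular no incident out-edge, so all $\Delta$ edges at $v$ lie in $\In(\F)$ and the $H$-component of $v$ contains its full $G$-neighbourhood. For any small $H$-component $S$ the bound $|\partial_G(S)|\leq|\Out(\F)|\leq\eta|E|$ together with $\phi_G(1/2)\geq 1/10$ gives $|S|\leq 5\eta n$; iterating the ``neighbourhood-closure'' property of $V_0$-vertices inside $G$ and using the vertex-mass bound $|\bigcup_\gamma V_\gamma|\leq 2|\B_\infty(\Out(\F))|\leq 20|\Out(\F)|\leq 20\eta|E|$ then forces $S\cap V_0=\emptyset$. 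Granted the claim, the bijection is easy: passing from $H$ to $G_\gamma$ only adds the edges $\bigsqcup_{\gamma'\neq\gamma}E_u(\gamma')$, all of which have both endpoints inside some $V_{\gamma'}$ disjoint from $V_\gamma$, so a small $H$-component $S\subseteq V_\gamma$ remains a small component of $G_\gamma$ unchanged; conversely, applying the structural claim to $G_\gamma$ shows any small component of $G_\gamma$ already lies inside $V_\gamma$ and therefore equals the corresponding $H$-component.

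\textbf{Main obstacle.} The only delicate step is the structural claim that no small $H$-component touches the ordered region $V_0$. The intuition is that a $V_0$-vertex ``pulls'' its entire $G$-neighbourhood into its $H$-component, and on an expander this should inflate the component beyond the budget $|S|\leq 5\eta n$ or else force it to merge with the giant $C_\ast$. Turning this intuition into a clean BFS/expansion argument, while honouring the bound $|\bigcup_\gamma V_\gamma|\leq 20\eta|E|$ to prevent the component from ``snaking out'' through polymer boundaries, is the main piece of work; once it is in hand, the rest of the lemma is a routine bookkeeping consequence.
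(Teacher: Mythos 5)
Your reduction to the two exponent identities, and the bijection bookkeeping converting $c(\F)-1=\sum_{\gamma}c'(\gamma)$ into a matching between small components of $H:=G[\In(\F)]$ and small components of the graphs $G[E\setminus E_u(\gamma)]$, are sound (and the $(\emm^\beta-1)$ part is exactly the paper's first line). Note, however, that the paper does not reprove the $q$-exponent identity at all: it simply cites \cite[Lemma 21]{RCM-Helmuth2020} for $q^{\sum_\gamma c'(\gamma)}=q^{c(\F)-1}$. So everything after your first paragraph is an attempt to reconstruct that external lemma from scratch.

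The gap is exactly where you flag it: the structural claim that a small component $S$ of $H$ avoids $V_0=V\setminus\bigcup_\gamma V(\gamma)$. Your proposed route --- iterate the neighbourhood-closure property of $V_0$-vertices and invoke $|\bigcup_\gamma V(\gamma)|\le 20\eta|E|$ --- does not go through. The iteration can die after one step: a vertex $v\in S\cap V_0$ does force all of its $G$-neighbours into $S$, but every such neighbour may already lie in some $V(\gamma)$, and polymer vertices are under no obligation to pull \emph{their} neighbourhoods into $S$ (their remaining edges can be out-edges). So the closure only yields $|S|\ge \Delta+1$, nowhere near a contradiction with $|S|\le 5\eta n$. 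The vertex-mass bound does not rescue this: $20\eta|E|=10\eta\Delta n$ can be as large as $n/2$ for the paper's choice $\eta=\min(\delta/5,1/100)$ and $\Delta\ge 5$, so ``$S$ cannot fit inside the polymer region'' is simply false in general. The argument that does work is the one the paper records as Item~1 of Lemma~\ref{lem:shortname}: let $\tau\subseteq S$ be the set of vertices with some incident edge outside all polymers; check that every edge of the cut $(\tau,V\setminus\tau)$ lies in a polymer (it is either an out-edge or incident to a vertex of $S\setminus\tau$); use $|\tau|\le\delta n$ (Corollary~\ref{cor:big}) and the expansion $\phi_G(\delta)\ge 5/9$ to find, by averaging, a vertex of $\tau$ with at least $5\Delta/9$ incident edges in $\B_\infty(\Out(\F))$ --- which, by the inductive definition of $\B_{j+1}$, forces \emph{all} its incident edges into a polymer, contradicting $v\in\tau$. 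The deliberate matching of the expansion constant $5/9$ against the $5\Delta/9$ threshold in the polymer definition is the ingredient your sketch is missing; without it the claim, and hence the lemma, is not established.
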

\begin{proof}
   Note that $\Out(\F) = \bigcup_{\gamma\in\Gamma(\F)} E_u(\gamma)$, so $(\emm^\beta-1)^{|E| - \sum_{\gamma\in\Gamma(\F)}|E_u(\gamma)|} = (\emm^\beta-1)^{|\In(\F)|}.$
    Hence, by \cite[Lemma 21]{RCM-Helmuth2020}, $q^{\sum_{\gamma\in\Gamma(\F)} c'(\gamma)} = q^{c(G[\In(\F)]) - 1}$, so the result follows.
\end{proof}

\begin{lemma}\label{lemm:large-q-large-polymer-unlikely}\label{cor:bbb}
Let $\Delta \geq 5$ and $K\geq 0$ be integers, $\delta\in(0, 1/2)$ be real. Then, for all sufficiently large $q$, the following holds for all $\beta \geq \log(q^{1.9/\Delta}+1)$. For $G\in\G_{\Delta,\delta,K}$ with $n=|V(G)|$, for $s:={2000}\log(n)/\beta$,
\[
    \pi^{\ord}_G\big(
    \mbox{$\F$ contains a polymer with size at least $s$} \big)\leq 2n^{-3/2}.
\]
\end{lemma}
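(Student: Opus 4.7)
The plan is to mimic the proof of Lemma~\ref{lemm:polymer-weight-disordered}, namely a polymer-swap argument combined with a Kote\v{c}k\'y--Preiss style summation, but adapted to ordered polymers and upgraded to yield a $\beta$-dependent decay rate. The two essential differences from the disordered case are (i) replacing Observation~\ref{obs:obs} by Lemma~\ref{lemm:polymer-weight-configuration-weight} in the swap step, and (ii) producing a polymer-sum bound whose exponential decay rate scales linearly with $\beta$, which is what allows the stronger threshold $s=2000\log(n)/\beta$ in place of the $\Theta(\log n)$ one used for disordered polymers.

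First we would show, for each ordered polymer $\gamma$, that $\pi_G^{\ord}(\gamma\in\Gamma(\F))\leq w_\gamma^{\ord}$. Given $\F\in\Omega^{\ord}$ with $\gamma\in\Gamma(\F)$, define $\F'$ by flipping every edge of $\gamma$ to be an in-edge; since $|\In(\F')|\geq |\In(\F)|\geq (1-\eta)|E|$ we still have $\F'\in\Omega^{\ord}$, and by construction $\Gamma(\F')=\Gamma(\F)\setminus\{\gamma\}$. Lemma~\ref{lemm:polymer-weight-configuration-weight} then gives $w_G(\F)/w_G(\F')=w_\gamma^{\ord}$, and since $\F\mapsto\F'$ is injective this yields $\sum_{\F:\,\gamma\in\Gamma(\F)}w_G(\F)\leq w_\gamma^{\ord}Z^{\ord}$. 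A union bound over vertices $v\in V$ and polymers $\gamma\ni v$ of size at least $s$ then reduces the lemma to the per-vertex estimate
\[
\max_{v\in V}\ \sum_{\gamma\ni v,\ |E(\gamma)|\geq s}w_\gamma^{\ord}\ \leq\ \tfrac{1}{2}\,\emm^{-c_1\beta s}
\]
for some absolute constant $c_1>0$ (independent of $q,\beta$). Indeed, substituting $s=2000\log(n)/\beta$ bounds the right-hand side by $\tfrac{1}{2}n^{-2000c_1}$, and summing over the $n$ vertices yields a total probability of at most $\tfrac{n}{2}\cdot n^{-2000c_1}\leq 2n^{-3/2}$ provided $c_1\geq 1/800$.

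The hard part will be establishing this $\beta$-dependent polymer-sum bound, which is the ``extra work'' alluded to in the outline of Theorem~\ref{thm:ordered}: the Kote\v{c}k\'y--Preiss-type estimate used in \cite{RCM-Helmuth2020} has a $\log q$-type decay rate and would yield only $s=\Theta(\log n)$. To extract a linear-in-$\beta$ rate, we would combine $|E_u(\gamma)|\geq |E(\gamma)|/10$ from \cite[Lemma~12]{RCM-Helmuth2020} with $(\emm^\beta-1)^{-1}\leq 2\emm^{-\beta}$ (which holds since $\beta\geq \log(q^{1.9/\Delta}+1)\geq \log 2$) to get $(\emm^\beta-1)^{-|E_u(\gamma)|}\leq 2^{|E_u(\gamma)|}\emm^{-\beta|E(\gamma)|/10}$. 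The entropy factor $q^{c'(\gamma)}$ would be controlled by a bound of the form $c'(\gamma)\leq \alpha'|E_u(\gamma)|$ for a sufficiently small $\alpha'=\alpha'(\Delta)$ derivable from the $B_\infty$-saturation property of ordered polymers; the hypothesis $\beta\geq (1.9/\Delta)\log q$ then ensures $\alpha'\log q\leq \beta/20$, so $q^{c'(\gamma)}$ is absorbed into half of the previous decay, leaving a spare $\emm^{-c_1\beta|E(\gamma)|}$ factor. Combined with the standard enumeration bound of at most $(\emm\Delta)^k$ connected subgraphs of $G$ of size $k$ through a fixed vertex, we obtain a geometric sum of ratio less than $1/2$ for all sufficiently large $q$, completing the estimate.
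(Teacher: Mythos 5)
Your proposal follows essentially the same route as the paper's proof: the polymer-swap bound $\pi^\ord(\gamma\in\Gamma(\cdot))\leq w_\gamma^\ord$ via Lemma~\ref{lemm:polymer-weight-configuration-weight}, a union bound over vertices, and a hand-computed polymer sum using $c'(\gamma)\leq\tfrac{9}{5\Delta}|E_u(\gamma)|$, $|E(\gamma)|\leq 10|E_u(\gamma)|$ and the enumeration bound to extract a decay rate linear in $\beta$. One small correction: with the actual constant $\alpha'=9/(5\Delta)$ from \cite[Lemma 14]{RCM-Helmuth2020}, the hypothesis $\beta\geq\tfrac{1.9}{\Delta}\log q$ gives $\alpha'\log q\leq\tfrac{9}{9.5}\beta$, so the entropy factor consumes roughly $\tfrac{19}{20}$ of the decay (not $\tfrac{1}{20}$ as you wrote), leaving only the residual $(\emm^\beta-1)^{-|E_u(\gamma)|/20}$ --- the conclusion survives, but the margin is thin rather than generous, and $\alpha'$ cannot be taken ``sufficiently small''.
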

\begin{proof}
This proof closely follows the proof of Proposition 15 from \cite{RCM-Helmuth2020}.
Suppose that $q$ is large enough so that $q^{1.9/\Delta}\geq (2\emm\Delta)^{400}$ and fix arbitrary $\beta \geq \log(q^{1.9/\Delta}+1)$.

Consider an arbitrary ordered polymer~$\gamma$. For any configuration $\F\in \Omega^{\ord}$ with $\gamma\in \Gamma(\F)$, by
Lemma~\ref{lemm:polymer-weight-configuration-weight}, it holds that $w_G(\F) \leq q(\emm^\beta - 1)^{|E|} w_\gamma^\ord$.
Note that for any $\F\in\Omega^\ord$ such that $\gamma\in\Gamma(\F)$, there is a configuration $\F'$ with $\Gamma(\F') = \Gamma(\F)\setminus\{\gamma\}$ (obtained by setting $\In(\F') = \In(\F)\cup E_u(\gamma)$), which therefore has weight $w_G(\F') = w_G(\F)/w_\gamma^\ord$.

Let $\Omega_\gamma$ be the set of all ordered configurations with a polymer $\gamma$. We can lower bound $Z^\ord$ by $\sum_{\F\in\Omega_\gamma} (w_G(\F) + w_G(\F')) = (1 + \frac{1}{w_\gamma^\ord})
\sum_{\F\in\Omega_\gamma} w_G(\F)$. Therefore, 
$$
\pi^\ord(\gamma\in\Gamma(\cdot)) = \frac{\sum_{\F\in\Omega_\gamma} w_G(\F)}{Z^\ord} \leq \frac{1}{1 + \frac{1}{w_\gamma^\ord}}\leq w_\gamma^\ord,
$$
so we conclude that the probability that any fixed polymer $\gamma$ is a polymer of $\F\sim\pi^\ord_G$ is at most $w_\gamma^\ord$.

Let $u$ be a vertex in~$V(G)$ and denote by $\Gamma_u$ the set of polymers containing $u$. Let $p_u$ be the probability that $u$ is contained in a polymer of size at least~$s$ for $\F\sim\pi^\ord_G$. By a union bound over the polymers in $\Gamma_u$, 
We have that $p_u\leq P_u$, where 
\[
P_u:= \sum_{\gamma\in \Gamma_u;\  |E(\gamma)|\geq s} w_\gamma^\ord 
\]

By \cite[Lemma 14]{RCM-Helmuth2020}, 
$c'(\gamma)\leq 9|E_u(\gamma)|/(5 \Delta)$. The bound on~$\beta$ ensures that 
\[w_\gamma^\ord = q^{c'(\gamma)} (\emm^\beta-1)^{-|E_u(\gamma)|} \leq (\emm^\beta-1)^{(\frac{9}{5\Delta}\cdot \frac{\Delta}{1.9} - 1)|E_u(\gamma)|}\leq (\emm^\beta-1)^{-|E_u(\gamma)|/20}.\]

Hence we get
\[
P_u \leq \sum_{\gamma\in \Gamma_u;\  |E(\gamma)|\geq s}  (\emm^\beta-1)^{-|E_u(\gamma)|/20}.
\]
By \cite[Lemma 12]{RCM-Helmuth2020}, for any ordered polymer $\gamma$, $|E(\gamma)|\leq 10|E_u(\gamma)|$. Thus  
\[
    P_u \leq \sum_{k\geq s/10}\  \ \sum_{\gamma\in \Gamma_u;\ |E_u(\gamma)| = k} (\emm^\beta-1)^{-|E_u(\gamma)|/20}.
\]
By \cite[Proof of Proposition 15]{RCM-Helmuth2020}, the number of polymers with $k$ unoccupied edges containing a particular vertex is at most $(2\emm\Delta)^{10k}$, hence
\[ P_u\leq \sum_{k\geq s/10}\big((2\emm\Delta)^{10}(\emm^{\beta}-1)^{-1/20}\big)^{k}.\]
By the lower bounds on $\beta$ and $q$, we have $\emm^{\beta}-1\geq q^{1.9/\Delta}\geq (2\emm\Delta)^{400}$ so 
\[ P_u\leq \sum_{k\geq s/10} (\emm^{\beta}-1)^{-k/40} \leq  \sum_{k\geq s/10} \emm^{-\beta k/80}.\]
The last sum is a geometric series with ratio $\emm^{-\beta/80}<1/2$, so it is upper bounded by twice its largest term. Since $s=\tfrac{2000}{\beta}\log n$, it follows that $P_u\leq 2/n^{5/2}$.
By a union bound over the vertices of $G$, the probability that there is a polymer of $\F\sim \pi^{\ord}_G$ which has size at least~$s$ is  therefore  at most $2/n^{3/2}$, as required.
\end{proof}

Furthermore, we can prove the following lemma based on the proof of \cite[Lemma 21]{RCM-Helmuth2020}.

\begin{lemma}\label{lemm:vertices-in-small-components-have-all-edges-in-single-polymer}\label{lem:shortname}
Fix $\Delta \geq 5$, $K\geq 0$ and $\delta \in (0,1/2)$. Suppose $n\geq 2/\delta$.
Let $G=(V,E)$ with $|V|=n$ be a graph in $\G_{\Delta,\delta,K}$. Consider $\F\in\Omega^\ord$. Let $\kappa$ be the giant component of $G[\In(\F)]$ (which exists by Corollary~\ref{cor:big}).
\begin{enumerate}
\item \label{item:one} 
If $u\notin \kappa$ and $e$ is incident to~$u$ then $e$ is in a polymer   of $\F$.
            
\item \label{item:two}
Let $S$ be a non-empty  set of edges with $|S| <  {9\Delta n}/{200}$.
Let $\kappa'$ be a component of $G[\In(\F)\backslash S]$ with
$|V(\kappa')| < n/2$. Then all but at most $45\Delta|S|$ vertices of $\kappa'$ are such that all their incident edges belong to a polymer of~$\F$. 
            
\item \label{item:three} 
For~$S$ and~$\kappa'$ as in Item~\ref{item:two},
there are at most $50\Delta^2|S|$ components in $G[\B_\infty(\Out(\F))]$ containing a vertex of $\kappa'$, i.e., there are  at most $50\Delta^2|S|$ polymers of~$\F$ containing vertices of $\kappa'$.
    \end{enumerate}
\end{lemma}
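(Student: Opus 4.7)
All three items follow from a single expansion-based analysis of the $\B$-closure operation used to define ordered polymers, adapting the techniques underlying~\cite[Lemma~21]{RCM-Helmuth2020}.

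For Item~\ref{item:one} I argue by contradiction. With $A=\Out(\F)$, let $T:=\{u\in V\setminus V(\kappa): \exists\, e\ni u \text{ with } e\notin \B_\infty(A)\}$ and suppose $T\neq\emptyset$. Since every out-edge already lies in $\B_0(A)$, any bad edge at $u\in T$ is an in-edge, and its other endpoint lies in the same non-giant component of $G[\In(\F)]$ as $u$---hence in $V\setminus V(\kappa)$---and also carries that bad edge, so it too is in $T$. Moreover, because $u\in T$ never gets absorbed by the closure, $u$ has at most $\lceil 5\Delta/9\rceil-1$ edges in $\B_\infty(A)$, so at most $\lceil 5\Delta/9\rceil-1$ edges leaving $T$. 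Summing yields $|E_G(T,V\setminus T)|\leq (\lceil 5\Delta/9\rceil-1)|T|$, but Corollary~\ref{cor:big} gives $|T|\leq |V\setminus V(\kappa)|\leq \delta n$, and then $\phi_G(\delta)\geq 5/9$ forces $|E_G(T,V\setminus T)|\geq (5\Delta/9)|T|$, a contradiction.

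For Item~\ref{item:two} I apply the same closure argument to $T^{\ast}:=\{u\in V(\kappa'): \exists\, e\ni u,\, e\notin\B_\infty(A)\}$. By Item~\ref{item:one}, $T^{\ast}\subseteq V(\kappa)\cap V(\kappa')$. The key new observation is that for $v\in V(\kappa')$, any bad in-edge $\{v,w\}$ with $w\notin V(\kappa')$ must lie in $S$, since $\kappa'$ is a component of $G[\In(\F)\setminus S]$. Writing $s_v$ for the number of $S$-edges at $v$, each $v\in T^{\ast}$ has at least $\Delta-\lceil 5\Delta/9\rceil+1-s_v$ bad in-edges back into $T^{\ast}$, hence at most $\lceil 5\Delta/9\rceil-1+s_v$ edges leaving $T^{\ast}$. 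Summing and using $\sum_v s_v\leq 2|S|$ gives $|E_G(T^{\ast},V\setminus T^{\ast})|\leq (\lceil 5\Delta/9\rceil-1)|T^{\ast}|+2|S|$. When $|T^{\ast}|\leq \delta n$, combining with $\phi_G(\delta)\geq 5/9$ gives $|T^{\ast}|=O(|S|)$, comfortably within $45\Delta|S|$. For $|T^{\ast}|>\delta n$ (so $|V(\kappa')|>\delta n$ too), I instead apply the weaker expansion $\phi_G(1/2)\geq 1/10$ to $V(\kappa')$, combined with $|S|<9\Delta n/200$ and $|\Out(\F)|\leq \eta|E|$, and exploit that all $V(\kappa')$-boundary in-edges of $G[\In(\F)]$ lie in $S$; the calibration $\eta=\min(\delta/5,1/100)$ from Definition~\ref{def:eta} leaves just enough slack to deliver the desired bound in this case too.

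For Item~\ref{item:three} I classify the polymers meeting $V(\kappa')$. Polymers containing a vertex of $T^{\ast}$ number at most $45\Delta|S|$ by Item~\ref{item:two}, since distinct polymers are vertex-disjoint. Any remaining polymer has all its $V(\kappa')$-vertices good, so---because every polymer must contain some bad vertex (otherwise $\B_\infty(A)$ would cover a whole component of $G$, impossible in view of $|\B_\infty(A)|\leq 10|\Out(\F)|<|E|$)---it must extend beyond $V(\kappa')$ via a polymer edge crossing the boundary. Such a crossing edge is either an in-edge in $S$ (giving at most $|S|$ polymers directly) or an out-edge of $\F$; the out-edge case is handled by a short counting argument that uses each good boundary vertex carrying all $\Delta$ of its edges into its unique polymer. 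Aggregating the contributions yields the claimed bound $50\Delta^2|S|$. The main obstacle is the large-$|T^{\ast}|$ case of Item~\ref{item:two} together with the analogous out-edge counting in Item~\ref{item:three}, both of which rely on a tight balancing of the two expansion profiles against the prescribed numerical constraints.
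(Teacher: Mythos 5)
Your Item~1 and the small-$|T^{\ast}|$ half of Item~2 are sound and essentially coincide with the paper's expansion argument (the paper phrases it via averaging to find a single absorbed vertex rather than summing cut degrees, but the content is the same). The genuine gap is the case $|T^{\ast}|>\delta n$ in Item~2. The ingredients you invoke there --- $\phi_G(1/2)\geq 1/10$ applied to $V(\kappa')$, the bound $|\Out(\F)|\leq \eta|E|$, and the fact that every in-edge of $\F$ in the cut $E(V(\kappa'),V\setminus V(\kappa'))$ lies in $S$ --- yield exactly $\tfrac{\Delta}{10}|V(\kappa')|\leq \eta|E|+|S|$, i.e.\ $|V(\kappa')|\leq 5\eta n+10|S|/\Delta\leq \delta n+10|S|/\Delta$. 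Since $|T^{\ast}|$ can legitimately sit in the window $(\delta n,\,\delta n+10|S|/\Delta]$, this produces neither a contradiction nor a bound of the form $O(\Delta|S|)$; with $\eta=\delta/5$ the inequality degenerates to $0<|S|$, so there is no slack at all. What is missing is the paper's device for precisely this window: pass from $\tau=T^{\ast}$ to a subset $U\subseteq\tau$ of size $|\tau|-\lceil 10|S|/\Delta\rceil\leq\delta n$, apply $\phi_G(\delta)$ to $U$, and absorb the at most $\Delta\lceil 10|S|/\Delta\rceil$ edges between $U$ and $\tau\setminus U$ into an additive $O(\Delta|S|)$ error, giving $|E(\tau,V\setminus\tau)|\geq \tfrac59\Delta|\tau|-O(\Delta|S|)$; the averaging/ceiling step then forces $|\tau|\leq 45\Delta|S|$. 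Without this (or an equivalent) step your Item~2 does not close.

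Item~3 as sketched is also not complete: the polymers whose $V(\kappa')$-vertices are all good may reach their bad vertex through an out-edge of $\F$ crossing $\partial V(\kappa')$, and the number of such out-edges is only bounded by $\eta|E|=\Theta(n)$, not by $O(\Delta^2|S|)$; the ``short counting argument'' you defer to is exactly the nontrivial part. The paper avoids boundary crossings entirely: since $\kappa'$ is connected, deleting the at most $\Delta|\tau|$ edges incident to the bad set $\tau$ leaves at most $1+\Delta|\tau|\leq 50\Delta^2|S|$ components, and each component of good vertices lies inside a single polymer because a good vertex carries all $\Delta$ of its edges in its polymer. I would adopt that route, which makes Item~3 an immediate corollary of Item~2.
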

\begin{proof} 
We start with the proof of Item~\ref{item:one}.
Consider a non-giant component $\kappa'$ of $G[\In(\F)]$. Let $\tau$ be the set of vertices in $\kappa'$ which have an incident edge that is not in a polymer of~$\F$.  
For contradiction, assume that $\tau$ is non-empty.       
By Corollary~\ref{cor:big}, $|\kappa'|\leq \delta|V|$ and thus $|\tau|\leq\delta|V|$. 
The definition of~$\tau$ 
and the fact that out edges are in polymers imply
that every edge in the cut $(\tau, V\backslash \tau)$  is in some polymer. By the expansion properties $|E(\tau,V \backslash \tau)|\geq 5\Delta|\tau|/9$, hence  there is a vertex  $u\in \tau$ with at least $ {5\Delta}/{9}$ incident edges in the cut, and thus, from the definition of polymers, all edges adjacent to~$u$ are in polymers, contradicting that~$u$ is in $\tau$.
    
Before proving Item~\ref{item:two}, we use it to prove Item~\ref{item:three}. Let $\kappa'$ be a non-giant component in $G[\In(\F)\backslash S]$. As before, let $\tau$ be the set of vertices in $\kappa'$  which have an incident edge that is not in a polymer of~$\F$. By Item~\ref{item:two},   $|\tau| \leq 45\Delta |S|$. Let $E_\tau$ be the set of edges that are incident to vertices in~$\tau$. Note that $G[V(\kappa'),E(\kappa')]$ is connected, and removing an edge from a graph can increase the number of components by at most one. Thus, $(V(\kappa'),E(\kappa')\setminus E_\tau)$ has at most $1+\Delta |\tau|$ components. Any edges in these components  are in polymers of~$\F$. Thus there are at most $1 + \Delta|\tau| \leq 50\Delta^2|S|$ components of $G[\B_\infty(\Out(\F))]$ that contain vertices in $V(\kappa')$,  proving Item~\ref{item:three}.
 
To finish, we prove Item~\ref{item:two}.
First we bound the size of $\kappa'$. Since $\F \in \Omega^\ord$, $|\In(\F)\backslash S| \geq (1 - \eta) |E| - |S|$. 
Since $\eta \leq 1/100$, $\eta + \frac{|S|}{|E|} < 1/10 \leq \phi_G(1/2)$, hence we can apply Lemma \ref{lemm:big-component} with $E' = \Out(\F) \cup S$ and $\theta = |E'|/|E|$. We find that $(V,\In(\F) \backslash S)$ has a component of size at least $(1 - \frac{\theta}{2 \phi_G(1/2)}) n \geq (1 - {5 \theta} ) n$ so the number of vertices in $\kappa'$ is at most $5 \theta n \leq 5(\eta + |S|/|E|)n \leq 
\delta n + 10|S|/\Delta$. Thus, $|\tau| \leq \delta n + 10 |S|/\Delta$.
Next let 
$\rho = \lceil 10 |S|/\Delta\rceil$ and let
$U$ be a subset of~$\tau$ of size $|\tau| -  \rho \leq \delta n$.
Then
\begin{align*} 
|E(\tau, V\backslash \tau)|
&\geq |E(U,V\setminus \tau)| \\
&\geq |E(U,V\setminus U)| - |E(U,\tau\setminus U)|\\
&\geq \tfrac59 \Delta |U| -  \rho \Delta =
\tfrac59 \Delta |\tau| - 14 \rho \Delta /9.
\end{align*}

By the definition of~$\tau$, all edges in 
$E(\tau,V(\kappa')\setminus \tau)$ are in polymers of~$\F$.
Since $\kappa'$ is a component of $G[\In(\F)\setminus S]$,
there are at most $|S|$ edges in $\In(\F)$ between~$\tau$
and $V\setminus V(\kappa')$.
Thus the number of edges in $E(\tau, V\setminus \tau)$ that are in polymers is at least $|E(\tau, V\backslash \tau)| - |S|$. 
This is at least 
$\tfrac59 \Delta |\tau| - 14 \rho \Delta /9 - |S| \geq \tfrac59 \Delta |\tau| - 149 |S|/9 - 14\Delta/9 \geq \tfrac59 \Delta |\tau| - 45 \Delta|S|/9$,
where the last inequality follows from $\Delta\geq 5$, $|S|\geq 1$. 

Averaged over all vertices $w\in \tau$,
the number of edges in $E(\tau,V\setminus \tau)$ that are in polymers and
are incident to~$w$
is at least $\tfrac59 \Delta  - 45 \Delta|S|/(9\tau)$.
We may assume $\tau$ is non-empty, otherwise Item~\ref{item:two} directly follows. 
Thus, there is a vertex $w\in \tau$ 
such that the number of edges in $E(\tau,V\setminus \tau)$ that are in polymers and
are incident to~$w$
is at least $\tfrac59 \Delta  - 45 \Delta|S|/(9\tau)$.
Since this number is an integer, the number of edges in $E(\tau,V\setminus \tau)$ that are in polymers and
are incident to~$w$
is at least 
$$ W := \Big\lceil \frac59 \Delta  - \frac{45\Delta |S|}{9|\tau|}\Big\rceil. $$ 

Suppose for contradiction that $|\tau| > 45\Delta |S|$. Then the term that is subtracted in the definition of~$W$ is less than~$1/9$. On the other hand, if the first term, $5\Delta/9$ is not an integer, then its non-integer part is at least~$1/9$.
Either way, $W = \lceil 5 \Delta/9\rceil \geq 5\Delta/9$. As in the proof of Item~\ref{item:one}, all incident edges of $w$ hence must be a in a polymer, contradicting that $w\in\tau$.
\end{proof}

\section{Proving WSM within the ordered phase}\label{subsec:wsm-ordered}\label{sec4}

In this section we will prove Theorem~\ref{thm:RRwsm}. We will use the following notation.
\begin{definition}
Let $G=(V,E)$ be a graph. Given vertices $u,v\in V$, 
let $d_G(v,u)$ be the distance in~$G$ from~$v$ to~$u$. For $U\subseteq V$ and $F\subseteq E$, denote by $d_G(v,U)$ the min distance in~$G$ from~$v$ to any vertex in~$U$, and by $d_G(v,F)$ the min distance in~$G$ to any endpoint of any edge in~$F$.
Let $\partial_G(F)$ be the set of vertices in~$V$ incident to both an edge in~$F$ and an edge in $E\backslash F$. For a rooted tree $T$ and a vertex $u\in V(T)$, let $T_u$ denote the subtree of $T$ rooted at $u$.
\end{definition}
Fix positive integers $\Delta \geq 5$ and $K\geq 0$. 
Let $M = M(K,\Delta)=2000 \times 400 \times (2+K) \Delta\log(\Delta-1)$. Fix $q$ sufficiently large and $\beta \geq  \beta_c$.
Fix a graph $G=(V,E)\in\G_{\Delta,\delta,K}$ with $n=|V|$ sufficiently large and fix a vertex $v\in V$.  Let
$r := \min(\tfrac\M\beta, \tfrac 13) \log_{\Delta-1} n$.

\begin{definition}\label{def:excess-edges-and-tree}
Consider a breadth-first search from~$v$ in~$G[B_r(v)]$.
Let $T_0$ be the rooted tree consisting of all forward edges in this breadth-first search.
We refer to edges in $E(T_0)$ as the tree edges.
We refer to the edges in $\Ec := E(B_r(v)) \backslash E(T_0)$ as \emph{excess edges}.
\end{definition}

\begin{observation}\label{obs:excess}
For every edge $\{u,u'\}$ in~$\Ec$, $d_{T_0}(v,u)$ and $d_{T_0}(v,u')$ differ by at most one. For any  $u\in B_r(v)$, $d_G(v,u) = d_{T_0}(v,u)$. 
\end{observation}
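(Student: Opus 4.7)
The plan is to prove the second statement first, since once it is established the first statement will follow almost immediately. For the second statement, I would invoke the standard correctness of breadth-first search: when BFS is run from $v$ in $G[B_r(v)]$, each vertex $u$ is first discovered at level equal to $d_{G[B_r(v)]}(v,u)$. Since $u\in B_r(v)$, any shortest $v$-to-$u$ path in $G$ has length at most $r$ and every vertex along it is at distance $\leq r$ from $v$, so the path lies entirely in $B_r(v)$; hence $d_{G[B_r(v)]}(v,u) = d_G(v,u)$. The forward edges comprising $T_0$ connect each non-root discovered vertex to a parent one level lower, so a short induction on $d_G(v,u)$ shows that the unique $v$-to-$u$ path in $T_0$ has length exactly $d_G(v,u)$, giving $d_{T_0}(v,u) \leq d_G(v,u)$. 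The reverse inequality is immediate from $E(T_0)\subseteq E(G)$, since any path in $T_0$ is a path in $G$.

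For the first statement, consider any $\{u,u'\}\in\Ec$. Both endpoints lie in $B_r(v)$ because $\Ec\subseteq E(B_r(v))$. Since $\{u,u'\}$ is an edge of $G$, the triangle inequality in $G$ yields $|d_G(v,u) - d_G(v,u')| \leq 1$. Applying the second statement to $u$ and $u'$ converts this into $|d_{T_0}(v,u) - d_{T_0}(v,u')| \leq 1$, which is the desired bound.

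I do not anticipate any real obstacle: both claims are standard properties of BFS trees, and the only mild point to verify is that shortest paths from $v$ to a vertex of $B_r(v)$ stay inside $B_r(v)$, so that running BFS in the induced subgraph gives the same level structure as running BFS in all of $G$ (restricted to $B_r(v)$).
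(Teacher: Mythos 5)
Your argument is correct and complete; the paper itself states this observation without proof, treating it as a standard property of BFS trees, and your write-up supplies exactly the standard justification (BFS levels equal distances, shortest paths to vertices of $B_r(v)$ stay inside $B_r(v)$, and the triangle inequality for the excess edge). Nothing to add.
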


Since $\G_{\Delta,\delta,K}$, $|\Ec|\leq K$ so the ball $B_r(v)$ is close to being a tree. The next observation picks out a piece of $B_r(v)$ without excess edges.

\begin{observation}\label{obs:forest-without-excess}\label{obs24}
Suppose $r > K+1$. Then there are integers $r_1$ and $r_2$ satisfying $r \geq r_1 > r_2 \geq 0$ such that $E(B_{r_1}(v))\setminus E(B_{r_2}(v))$ contains no edges from $\Ec$ and $r_1 - r_2 \geq \frac{r}{1 + K} - 1$.
\end{observation}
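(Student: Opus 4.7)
The plan is a pigeonhole argument over distance levels. For each excess edge $e = \{u,u'\} \in \Ec$, define its \emph{level} $\ell(e) := \max\{d_G(v,u), d_G(v,u')\}$; by Observation~\ref{obs:excess}, $\ell(e) \in \{1, \ldots, r\}$ (at most $r$ because both endpoints lie in $B_r(v)$, at least~$1$ because $v$ is the unique vertex at distance~$0$). Extending $\ell$ to every edge of $E(B_r(v))$ via the same $\max$, one verifies directly that for integers $0 \leq r_2 < r_1 \leq r$, an edge $e$ belongs to $E(B_{r_1}(v)) \setminus E(B_{r_2}(v))$ if and only if $r_2 < \ell(e) \leq r_1$. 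Hence the task reduces to finding $r_1, r_2$ with $\{r_2+1,\ldots,r_1\} \cap L = \emptyset$, where $L := \{\ell(e) : e \in \Ec\} \subseteq \{1,\ldots,r\}$.

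Since $|L| \leq |\Ec| \leq K$, the set $L$ partitions $\{1,\ldots,r\}$ into at most $K+1$ maximal runs of consecutive integers lying outside~$L$, and these runs together contain at least $r - K$ integers. By pigeonhole, some maximal run $\{a, a+1, \ldots, b\}$ has size $b-a+1 \geq (r-K)/(K+1) > r/(K+1) - 1$. Taking $r_1 := b$ and $r_2 := a - 1$ then gives $0 \leq r_2 < r_1 \leq r$ together with the required lower bound $r_1 - r_2 \geq r/(K+1) - 1$; the hypothesis $r > K+1$ ensures $r - K > 1$, so at least one good run is nonempty and $r_1 > r_2$.

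I do not foresee a real obstacle. The only point of mild care is verifying $\ell(e) \geq 1$ for every excess edge (so that $a \geq 1$ and hence $r_2 \geq 0$), and noting that Observation~\ref{obs:excess} correctly covers both inter-level excess edges (where endpoint distances differ by $1$) and intra-level ones (where they are equal) via the single definition of $\ell(e)$ as a maximum.
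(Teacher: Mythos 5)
Your proof is correct and is essentially the paper's argument: your level function $\ell(e)$ is exactly the paper's $d^+(v,e)$, and both proofs run the same pigeonhole over the at most $K+1$ maximal gaps in $\{1,\dots,\lfloor r\rfloor\}\setminus\{\ell(e):e\in\Ec\}$, ending with the same choice $r_2 = a-1$. The only cosmetic difference is that the paper works with $\{1,\dots,\lfloor r\rfloor\}$ since $r$ need not be an integer, which replaces your ``$r-K$ integers'' count by $\lfloor r\rfloor - K \geq r-1-K$ but yields the same final bound $r/(K+1)-1$.
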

\begin{proof} 
For every edge~$e$ of~$G$, let $d^+(v,e)$ denote the maximum distance in~$G$ from~$v$ to an endpoint of~$e$.
 Let
$\mathcal{D} = \{ d^+_G(e) \mid  e \in \Ec\}$. Note that $|\mathcal{D}| \leq K$. Thus 
the set $\{1,2,\ldots,\lfloor r\rfloor\} \backslash \mathcal{D}$ can be partitioned into at most 
$K + 1$ intervals of contiguous distances.
Thus, there are integers~$r_1$ and~$r'_2$  with $r_1 \geq r_2'$ such that 
the set $\{r_2', r_2' + 1, \dots, r_1\}$ does not intersect~$\mathcal{D}$ 
 and $r_1 - r_2' + 1 \geq \frac{\lfloor r\rfloor - K}{K + 1} \geq \frac{r - 1 - K}{K + 1} = \frac{r}{K+1} - 1 $.

Let $r_2 = r_2' - 1$. We claim that $(E(B_{r_1}(v))\backslash E(B_{r_2}(v)))\cap\Ec = \emptyset$. To see this, consider any $ e \in \Ec$. 
By the choice of $r_1$ and $ r_2$, either $d^+(v,e)> r_1$ or $ d^+(v,e)\leq r_2$.  In the former case, an endpoint of~$e$ is outside $V(B_{r_1}(v))$, thus $e \notin B_{r_1}(v)$. In the latter case, both endpoints of~$e$ are in $B_{r_2}(v)$, thus $ e\in B_{r_2}(v)$. That concludes the proof.
\end{proof}

\subsection{Definitions/Notation}\label{sec:partnotation}

In addition to a configurations, we will work with \textit{partial configurations}, where some edges are not yet \textit{revealed}. We will use the symbol ``$*$'' to denote that an edge $e$ has not yet been revealed.
\begin{definition}
A \emph{partial configuration} $\A$ is a function $\A : E \rightarrow \{0,1,*\}$. 
We write $R(\A) \coloneqq\A^{-1}(\{0,1\})$ and refer to it as the set of \emph{revealed edges} in~$\A$. We denote by $\In(\A)$ the set of edges $e$ with $\A(e)=1$ (the ``in'' edges)  and by $\Out(\A)$ the set of edges $e$ with $\A(e)=0$ (the ``out'' edges).
We use $\Omega^*$ to denote the set of all partial configurations.
\end{definition}

Note that each configuration is also a partial configuration -- these are precisely the partial configurations~$\A$ with $R(\A)=E$.
Next we define notation for partial configurations.

\begin{definition}
    Let $\A_1,\A_2$ be partial configurations. We say that $\A_2$ is a \emph{refinement} of $\A_1$, written $\A_1\subseteq\A_2$, whenever $\In(\A_1)\subseteq\In(\A_2)$ and $\Out(\A_1)\subseteq\Out(\A_2)$ (and hence $R(\A_1)\subseteq R(\A_2)$ as well). If $R(\A_1)$ and $R(\A_2)$ are disjoint, the \textit{union} of $\A_1$ and $\A_2$, written $\A_1\cup\A_2$, is the partial configuration with $\In(\A_1\cup\A_2) = \In(\A_1)\cup\In(\A_2)$ and $\Out(\A_1\cup\A_2)=\Out(\A_1)\cup\Out(\A_2)$.
\end{definition}

The following distributions on configurations and partial configurations will be useful.

\begin{definition}\label{def:refine}
    Let $\A$ be a partial configuration in~$\Omega^*$.
    Then $\Omega_\A := \{ \F \in \Omega \mid \A \subseteq \F\}$ will denote the set of all configurations that refine $\A$ and $\pi_\A$  is the conditional distribution of~$\pi$ in~$\Omega_\A$, i.e., for $\F \in \Omega_\A$, $\pi_\A(\F) = \pi(\F) /\sum_{\F' \in \Omega_\A} \pi(\F')$.
    Similarly, if $\Omega_\A \cap \Omega^\ord$ is non-empty, then $\pi^\ord_\A$ is the conditional distribution of~$\pi^\ord$ in~$\Omega_\A\cap \Omega^\ord$.  
\end{definition}

\subsection{The coupling process}\label{sec:coupling}

From now on, we assume that $n=|V(G)|$ is sufficiently large, so $r>K + 1$. 
We define $r_1$ and $r_2$ as in Observation~\ref{obs:forest-without-excess}.
Note $r_1\geq \frac{r}{K+1}-1$.
Let $T = T_0[B_{r_1}(v)]$ be the first $r_1 + 1$ levels of the tree $T_0$.
To prove Theorem~\ref{thm:RRwsm}, we construct a coupling of $\F^+\sim\pi_{\B^+_{r_1}(v)}$ and $\F^\ord\sim\pi^\ord$, such that  $\Pr(\F^+(e)\neq\F^\ord(e)) \leq 1/(100 |E|)$.

To do this, we generate two sequences of coupled partial configurations, starting by revealing all edges outside the ball $B_{r_1}(v)$ and progressively revealing the edges of~$T$, starting from its leaves. We reveal edges one sub-tree at a time until either the boundary is contained in a giant component of in edges,
or an edge incident to $B_{r_2}(v)$ is revealed. In the first case, we use Observation~\ref{obs:same-connectivity-boundary-implies-same-marginals} from Appendix~\ref{app:boundary} to couple edges incident to $v$ perfectly. In the other case, we will be able to show the existence of a large polymer (which is an unlikely event).

Formally, we will construct a sequence of edge subsets $F_0 \subseteq F_1 \subseteq \cdots \subseteq E$, sequence of vertex subsets $V_0, V_1, \dots$, and two sequences of partial configurations $\F_0^+\subseteq\F_1^+\subseteq\dots\subseteq\F^+$ and $\F_0^\ord\subseteq\F_1^\ord\subseteq\dots\subseteq\F^\ord$, maintaining the following invariant for all integers~$i\geq 0$:

\[
    R(\F_i^+) = R(\F_i^\ord)= F_i  \mbox{ and }  \In(\F_i^\ord) \subseteq \In(\F_i^+)
\]

For each $i$, all unrevealed edges of some subtree of $T$ get revealed. It will also hold, for each $i$, that $E\setminus E(B_{r_1}(v)) \subseteq F_i$. 
To facilitate this process of revealing the edges, the following definitions will be helpful.
\begin{definition}\label{def:F_revealed}
 Let $F$ be a subset of $E$. Then $\Omega_{F} \coloneqq \{ \A \in \Omega^* \mid R(\A) = F\}$. The distribution~$\pi_F$  on $\Omega_F$ is defined as follows. For $\A\in \Omega_F$, $\pi_F(\A) := \pi(\Omega_\A)$. 
 Similarly, $\pi^\ord_F$ is the distribution on $\Omega_F$ so that  $\pi_F^\ord(\A) = \pi^\ord(\Omega_\A)$. 
\end{definition}
\begin{definition}\label{def:long}
Let $\A\in \Omega^*$ be a partial configuration. Let $F$ be a subset of $E$ with $R(\A) \subseteq F$. Then $\Omega_{\A,F} := \{ \A'\in \Omega_F \mid \A\subseteq \A', R(\A)=F\}$. We define a distribution $\pi_{\A,F}$ on $\Omega_{\A,F}$ given by 
$$\pi_{\A,F}(\A') := \frac{\pi(\Omega_{\A'})}{\sum_{\A'' \in \Omega_{\A,F}} \pi(\Omega_{\A''})} .$$      
\end{definition}

We can now describe the process in detail.
\begin{enumerate}[{Step} 1.]
\item \label{stepone}
Let $i := 0$. Let $F_0 := E \setminus E(B_{r_1}(v))$.
Let $\F_0^\ord\sim\pi^\ord_{F_0}$.
Let $V_0 := \{ w\in V \mid d_G(v,w) = r_1\}$. 

\begin{enumerate}
\item \label{steponea}
If $|\In(\F_0^\ord)| < (1-\eta) |E|$: Generate $\F^\ord \sim \pi^\ord_{\F_0^\ord}$ and generate $\F^+_0\sim\pi_{\B_{r+1}^+(v)}$. Let $\F^+ := \F^+_0$. Terminate (unsuccessfully). \label{step-tree-like:first-terminate}
        
\item \label{steponeb}
Otherwise: Define $\F_0^+$ by $R(\F_0^+) = \In(\F_0^+) = F_0$.  
\end{enumerate}

\item \label{steptwo}
Repeat until $d_G(F_i, v) \leq r_2$:

\begin{description}
\item[\quad] 
Let $W_i$ be the set of all vertices in $\partial_G(F_i)$ 
that are in a component of size less than~$n/2$ in $G[\In(\F_i^\ord)]$.

\begin{enumerate}
\item \label{steptwoa}
If $W_i$ is non-empty: Choose $w_i \in W_i$ 
to maximise $d_G(w_i,v)$.
Let $p_{i}$ be the parent of~$w_i$ in~$T$.
Let $F_{i+1} := F_i \cup E(T_{p_i})$. Generate, optimally coupled, 
$\F^\ord_{i+1}\sim \pi_{\F^\ord_i,F_{i+1}}$ and $\F_{i+1}^+ \sim \pi_{\F^+_i,F_{i+1}}$. Let $V_{i+1} := (V_{i}\backslash V(T_{p_i}))\cup\{p_i\}$. Let $i := i + 1$.  
        
\item \label{steptwob} Otherwise: Generate, optimally coupled, $\F^\ord\sim\pi_{\F_i^\ord}$ and $\F^+\sim\pi_{\F^+_i}$ and terminate (succesfully).
\end{enumerate}
\end{description}

\item \label{stepthree}
Generate, optimally coupled, $\F^\ord\sim\pi_{\F_i^\ord}$ and $\F^+\sim\pi_{\F^+_i}$. Terminate (unsuccesfully).
\end{enumerate}

We next derive some useful observations justifying the definition of the process. We will require the following definition.

\begin{definition}\label{def:boundary}
Let $\A$ be a partial configuration. The \emph{boundary component set} $\xi(\A)$ of $\A$ is the set of equivalence classes corresponding to components of the boundary vertices, i.e.,
$   \xi(\A) = \{ \kappa \cap \partial(R(\A)) \mid 
\mbox{$\kappa$ is a component in  $G[\In(\A)]$}
\}$.
\end{definition}

\begin{observation}\label{obs:inv}
Suppose that the process does not terminate in Step~\ref{steponea} (with $i=0$). Then it maintains the following invariants for all $i\geq 0$ such that $\F_i^\ord, \F_i^+, F_i$ and $V_i$ are defined.

\begin{enumerate}[{Inv}1{(i)}.]
\item \label{inv1} $R(\F_i^\ord) = R(\F_i^+)=F_i$, $\In(\F_i^\ord) \subseteq \In(\F_i^+)$, and $E\setminus E(B_{r_1}(v))\subseteq F_i$, 
\item \label{inv2} For all distinct $u_1, u_2\in V_i$, $V(T_{u_1})\cap V(T_{u_2}) = \emptyset$,
\item \label{inv3} $\bigcup_{u\in V_i} V(T_u) = V(F_i)\cap V(T)$ and $\bigcup_{u\in V_i} E(T_u) \subseteq F_i$,
\item \label{inv4} If $d_G(F_i, v) > r_2$, then $\partial_G (F_i)\subseteq V_i$,
\item \label{inv5} If $F_{i+1}$ is defined, then $F_i \subset F_{i+1}$, and
\item \label{inv6} If $\F_{i+1}^\ord$ and $\F_{i+1}^+$ are defined in the process, then $\F^\ord_i \subseteq \F^\ord_{i+1}$ and $\F^+_i \subseteq \F^+_{i+1}$
\end{enumerate}
\end{observation}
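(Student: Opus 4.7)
}
The plan is to prove all six invariants simultaneously by induction on $i$, verifying the base case ($i=0$) from the definitions in Step~\ref{stepone} and the inductive step by checking what Step~\ref{steptwoa} does to $F_i$, $V_i$, $\F_i^\ord$, $\F_i^+$. Invariants Inv5 and Inv6 are trivial by inspection of the update rule, and Inv1 reduces to the fact that $F_i$ is the common revealed set together with the requirement that the coupling chosen in Step~\ref{steptwoa} is a monotone coupling of $\pi_{\F_i^\ord,F_{i+1}}$ and $\pi_{\F_i^+,F_{i+1}}$ (which exists by monotonicity of the random-cluster distribution, combined with the inductive hypothesis $\F_i^\ord \leq \F_i^+$); the base case for Inv1 is immediate because Step~\ref{steponeb} sets $\In(\F_0^+)=F_0\supseteq \In(\F_0^\ord)$.

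The structural invariants Inv2 and Inv3 are then bookkeeping about the subtree decomposition. For the base case, $V_0$ consists of the leaves of $T$ at depth $r_1$, so each $T_u$ is a single vertex and both invariants hold trivially; the equality $V(F_0)\cap V(T)=V_0$ uses that the only vertices of $T$ incident to edges outside $E(B_{r_1}(v))$ are the leaves. Inductively, when $V_{i+1}=(V_i\setminus V(T_{p_i}))\cup\{p_i\}$ replaces a (possibly strict) collection of subtrees rooted in $V_i\cap V(T_{p_i})$ by the single enclosing subtree $T_{p_i}$, disjointness is preserved because no $u\in V_i\setminus V(T_{p_i})$ can be a proper ancestor of $p_i$ (such a $u$ would force $w_i\in V(T_u)$, contradicting Inv2 at step $i$), and the covering identity follows from $V(F_{i+1})=V(F_i)\cup V(T_{p_i})$ together with $\bigcup_{u\in V_i\cap V(T_{p_i})}V(T_u)\subseteq V(T_{p_i})$. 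Inv5 follows from the same ancestor argument applied to the edge $\{p_i,w_i\}$, which is in $E(T_{p_i})\setminus F_i$.

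The main obstacle, and the step requiring the most care, is Inv4. Given $d_G(F_{i+1},v)>r_2$, take $u\in\partial_G(F_{i+1})$; I would split on whether $u$ is already incident to $F_i$ or is newly reached by $E(T_{p_i})$. In the first case, Inv4 at step $i$ places $u$ in $V_i$, so the only issue is if $u\in V_i\cap V(T_{p_i})$, in which case $u$ is a proper descendant of $p_i$ (since $p_i\notin V_i$). In the second case, $u\in V(T_{p_i})$, and either $u=p_i\in V_{i+1}$ or $u$ is a proper descendant of $p_i$. Both problematic subcases are handled uniformly: a proper descendant $u$ of $p_i$ has depth at least $r_2+1$, so Observation~\ref{obs24} rules out any excess edge from $u$ inside $V(T)$; its tree edge to its parent and to its children all lie in $E(T_{p_i})\subseteq F_{i+1}$; and its edges leaving $V(T)$ lie in $E\setminus E(B_{r_1}(v))\subseteq F_0\subseteq F_{i+1}$. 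Hence every edge incident to such $u$ is in $F_{i+1}$, so $u\notin\partial_G(F_{i+1})$, a contradiction. The remaining cases give $u\in V_{i+1}$, completing the induction. The careful use of Observation~\ref{obs24}, and the fact that $p_i$ has depth at least $r_2$, is exactly what drives the argument and is the place where the choice of the strip between $r_1$ and $r_2$ pays off.
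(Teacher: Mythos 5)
Your plan is correct and takes essentially the same route as the paper's proof: the same simultaneous induction on $i$, the same base-case computations for Inv1--Inv4, the same use of monotonicity for the second part of Inv1, and the same crucial reliance on the excess-edge-free strip between depths $r_2$ and $r_1$. The only (harmless) divergence is in the inductive step for Inv4, where the paper routes through Inv3 at stage $i+1$ to locate the root $u'\in V_{i+1}$ with $u\in V(T_{u'})$ and then forces $u=u'$, whereas you argue directly that a proper descendant of $p_i$ at depth above $r_2$ has every incident edge already in $F_{i+1}$ and so cannot lie on the boundary; the two arguments rest on identical facts and are interchangeable.
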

\begin{proof}
    We do an induction on $i$. For the base case let $i = 0$. Inv\ref{inv1}($0$) holds by the definitions of $F_0$, $\F_0^\ord$ and $\F_0^+$.
    Inv\ref{inv2}($0$) follows from the fact that any $u\in V_0$ is a leaf of $T$ and thus $T_u$ contains a single vertex - $u$.
    
The second part of Inv\ref{inv3}($0$) is trivial 
as neither of the subtrees have any edges.  We show the  first part of Inv\ref{inv3}($0$)  by proving that for all $u\in V(T)$, $u\in V_0$ if and only if $u\in V(F_0)$.
First, suppose that $u\in V(F_0)\cap V(T)$.  The definitions of~$F_0$ and~$T$  
imply that $d_G(u,v) = r_1$ so $u\in V_0$, as desired. 
For the other direction,  consider $u\in V_0$.  We need to show that $G$ has an edge 
$\{u,u'\}$ with $d_G(u',v) > r_1$. The vertex~$u$ is a leaf of $T$, hence $u$ has a single incident edge in $E(T)$. Given that $G$ is $\Delta$-regular with $\Delta > 1$, there is some edge $\{u, u'\}$, such that $u'$ is not $u$'s parent in $T$. By Observation~\ref{obs:excess},  $d_G(u', v) \in \{r_1 - 1, r_1, r_1 + 1\}$. By the choice of $r_1$, $ r_2$, and $u'$, $\{u ,u'\} \notin E(B_{r_1}(v))\setminus E(B_{r_2}(v))$, so  $d_G(u', v) = r_1 + 1 > r_1$,  and the result follows.

The invariant Inv\ref{inv4}($0$) follows from the fact that for any $u\in\partial_G (F_0)$ there are edges $\{u ,u_1\} \in F_0$ and $\{u,u_2\}\notin F_0$, so   $d_G(u, v) \leq r_1$ and $d_G(u_2, v) \leq r_1$, and thus also $d_G(u_1, v) > r_1$, so by Observation~\ref{obs:excess} it follows that $d_G(u, v) = r_1$ and thus $u\in V_0$.

For the inductive step, suppose $F_{i+1}$, $\F_{i+1}^\ord$, $F_{i+1}^+$ and $V_{i+1}$ are all defined and suppose that Inv\ref{inv1}($i$)-Inv\ref{inv4}($i$) hold.
We now show Inv\ref{inv5}($i$), Inv\ref{inv6}($i$) and
Inv\ref{inv1}($i+1$)--Inv\ref{inv4}($i+1$).

For Inv\ref{inv5}($i$), $F_{i}\subseteq F_{i+1}$ follows by construction. For the strict inequality, it is enough to show that $\{w_i, p_i\}\in F_{i+1}\setminus F_i$. Since $F_{i+1}$ is defined, $d_G(F_i, v) > r_2$, otherwise the process would terminate before the $(i+1)$th iteration of Step~\ref{stepthree}. Thus by Inv\ref{inv4}($i$) and by $w_i\in \partial_G (F_i)$, $w_i\in V_i$. So 
for any $u\in V_i$, Inv\ref{inv2}($i$) implies that $p_i\notin V(T_u)$. Hence 
Inv\ref{inv3}($i$) implies that $p_i\notin V(F_i)$ so $\{w_i, p_i\}\notin F_i$. But $E(T_{p_i})\subseteq F_{i+1}$, 
hence $\{w_i ,p_i\}\in F_{i+1}$, completing the proof of Inv\ref{inv5}($i$).
Inv\ref{inv6}($i$) follows by construction. 

The first and third parts of Inv\ref{inv1}($i+1$) follow by construction, the second part follows from Corollary~\ref{cor:monotonicity} (see Appendix~\ref{app:boundary}, using $\In(\F_i^\ord)\subseteq\In(\F_i^+)$ and the fact that the coupling is optimal.
Inv\ref{inv2}($i + 1$) follows by construction and Inv\ref{inv2}($i$).
Also, the second part of Inv\ref{inv3}($i + 1$) follows by construction and Inv\ref{inv3}($i$).

For the first part of Inv\ref{inv3}($i+1$), 
by the definition of $F_{i+1}$, if $u\in V(F_{i+1})\cap V(T)$, then $u\in V(T_{p_i})$ or $u\in V(F_{i})\cap V(T)$. In the first case, note $p_i\in V_{i+1}$. Otherwise, by  Inv\ref{inv3}($i$), there is a vertex $u'\in V_i$ with $u\in V(T_{u'})$. If $u'$ is in $V(T_{p_i})$, then so is $u$, hence the first case applies. If not, then $u'\in V_{i+1}$. 
For the converse, for $u\in V_{i+1}$, there are two cases. If $u = p_i$ then
$E(T_{p_i})\subseteq F_{i+1}$ so
$V(T_{p_i}) \subseteq V(F_{i+1})\cap V(T)$. Otherwise, $u\in V_i$, so the result follows by  Inv\ref{inv3}($i$) and Inv\ref{inv5}($i$).

Finally we prove Inv\ref{inv4}($i+1$). Suppose that $d_G(F_{i+1}, v) > r_2$ and let $u\in\partial_G F_{i+1}$. Our goal is to show $u\in V_{i+1}$ Let $u_1, u_2\in V$ be such that $\{u ,u_1\}\in F_{i+1}$ and $\{u ,u_2\}\notin F_{i+1}$. By Inv\ref{inv1}($i+1$), $E\setminus E(B_{r_1}(v))\subseteq F_{i+1}$, and so $u, u_2 \in V(B_{r_1}(v)) = V(T)$. Also,   $d_G(u, v) > r_2$  implies $\{u ,u_2\}\in E(B_{r_1}(v))\setminus E(B_{r_2}(v)$. Since $\{u, u_1\}\in F_{i+1}$, $u\in V(F_{i+1})$, so by Inv\ref{inv3}($i+1$), there is a vertex~$u'\in V_{i+1}$ such that $u\in V(T_{u'})$. By the choice of $r_1$ and $r_2$, $\{u ,u_2\}$ is a tree edge, so if $u\neq u'$, by the second part of Inv\ref{inv3}($i+1$), $\{u ,u_2\}$ would be then in $F_{i+1}$, contradicting the choice of $u_2$. Thus $u = u' \in V_{i+1}$.
\end{proof}

\begin{observation}\label{obs:distributions are correct}\label{obs:dist}
The process eventually terminates, with $\F^\ord \sim\pi^\ord$ and $\F^+\sim\pi_{\B_{r_1}^+(v)}$. 
\end{observation}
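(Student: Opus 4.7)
The plan is to verify termination and the two distributional claims separately by structural induction, using the tower property of conditional distributions. Termination is immediate: if the process does not halt in Step~\ref{steponea}, then by Inv\ref{inv5} of Observation~\ref{obs:inv} we have $F_i \subsetneq F_{i+1}$ whenever $F_{i+1}$ is defined, so since $E$ is finite the loop in Step~\ref{steptwo} must exit (via either Step~\ref{steptwob} or Step~\ref{stepthree}) in at most $|E|$ iterations.

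For the law of $\F^\ord$, the key structural point is that when we do not terminate in Step~\ref{steponea}, we have $|\In(\F_0^\ord)| \geq (1-\eta)|E|$; by Inv\ref{inv1} and Inv\ref{inv6} every later $\F_i^\ord$ refines $\F_0^\ord$, so every full refinement of $\F_i^\ord$ lies in $\Omega^\ord$. This gives $\pi_{\F_i^\ord} = \pi^\ord_{\F_i^\ord}$ and $\pi_{\F_i^\ord, F_{i+1}} = \pi^\ord_{\F_i^\ord, F_{i+1}}$, so the sampling steps ``$\sim \pi_{\F_i^\ord, F_{i+1}}$'' and ``$\sim \pi_{\F_i^\ord}$'' in Steps~\ref{steptwoa}, \ref{steptwob}, and~\ref{stepthree} actually produce the corresponding ordered conditionals. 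I would then show by induction on $i$ that $\F_i^\ord \sim \pi^\ord_{F_i}$: the base case $\F_0^\ord\sim \pi^\ord_{F_0}$ holds by construction, and the inductive step is a one-line tower computation, since for any $\A \in \Omega_{F_{i+1}}$ its restriction $\A' := \A|_{F_i}$ is the unique element of $\Omega_{F_i}$ with $\A' \subseteq \A$, and
\begin{equation*}
\pi^\ord_{F_i}(\A') \cdot \pi^\ord_{\A', F_{i+1}}(\A) \;=\; \pi^\ord(\Omega_{\A'}) \cdot \frac{\pi^\ord(\Omega_\A)}{\pi^\ord(\Omega_{\A'})} \;=\; \pi^\ord_{F_{i+1}}(\A).
\end{equation*}
At termination a further tower application to $\F^\ord \sim \pi^\ord_{\F_i^\ord}$ yields $\F^\ord \sim \pi^\ord$. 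In the Step~\ref{steponea} case the same identity applied directly to $\F_0^\ord \sim \pi^\ord_{F_0}$ and $\F^\ord \sim \pi^\ord_{\F_0^\ord}$ gives the conclusion.

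The argument for $\F^+ \sim \pi_{\B_{r_1}^+(v)}$ is entirely dual. In Step~\ref{steponeb}, $\F_0^+$ assigns every edge of $F_0 = E \setminus E(B_{r_1}(v))$ to $1$, matching the (deterministic) marginal of $\pi_{\B_{r_1}^+(v)}$ on $F_0$. Since $\F_i^+ \supseteq \F_0^+$ always has all of $F_0$ as in-edges, conditioning $\pi$ on $\F_i^+$ agrees with conditioning $\pi_{\B_{r_1}^+(v)}$ on the restriction of $\F_i^+$ to $F_i \setminus F_0$, so the identical induction (with $\pi_{\B_{r_1}^+(v)}$ replacing $\pi^\ord$) gives $\F_i^+ \sim (\pi_{\B_{r_1}^+(v)})_{F_i}$, and hence $\F^+ \sim \pi_{\B_{r_1}^+(v)}$ at termination; in the Step~\ref{steponea} case this holds by construction. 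There is no real obstacle here~-- the statement is essentially bookkeeping; the only points requiring care are that the conditional distributions are well defined at every step (which follows because the partial configurations in the support of each intermediate law have strictly positive $\pi^\ord$- or $\pi$-mass on their refinements) and that the optimal couplings invoked in Steps~\ref{steptwoa}, \ref{steptwob}, and~\ref{stepthree} do not disturb the marginals, which is automatic from the definition of a coupling. Observation~\ref{obs:dist} makes no claim about the joint law of $(\F^\ord,\F^+)$; bounding $\Pr(\F^+(e)\neq \F^\ord(e))$ will be handled later via the polymer estimates (Lemma~\ref{introlem} and its corollaries).
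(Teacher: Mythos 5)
Your proposal is correct and follows essentially the same route as the paper's proof: termination via Inv5, the observation that $|\In(\F_0^\ord)|\geq(1-\eta)|E|$ forces every refinement into $\Omega^\ord$ so that the $\pi$- and $\pi^\ord$-conditionals coincide, and the support argument for $\F_0^+$ matching $\pi_{\B_{r_1}^+(v)}$. The only difference is that you spell out the tower-property induction that the paper leaves implicit, which is a harmless (and arguably helpful) elaboration.
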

\begin{proof}
    By Inv\ref{inv5}, the process always terminates. If the process terminates in Step~\ref{steponea}, then, by construction, the resulting configurations are drawn from the correct distributions. Otherwise, Step~\ref{steptwo} is executed. Thus, $(1-\eta) |E| \leq |\In(\F^\ord_0)|$. By Inv\ref{inv6}, for every value of $i$ until termination, $(1-\eta) |E| \leq |\In(\F^\ord_0)| \leq |\In(\F^\ord_i)|$. This implies that every $\Omega_{\F_i^\ord} \subseteq \Omega^\ord$ so $\pi_{\F_i^\ord}^\ord = \pi_{\F_i^\ord}$. From this it follows that $\F^\ord \sim\pi^\ord$. Given how $\F_0^+$ is generated in Step~\ref{steponeb}, it holds that, for any $\F\in\Omega$, $\pi_{\B_{r_1}^+(v)}(\F) > 0$ precisely when $\F_0^+\subseteq\F$. Then $\F^+\sim\pi_{\B_{r_1}^+(v)}$ follows from the way that $\F^+_i$ is generated in the process.
\end{proof}
 
\begin{observation} \label{obs:if-end-in-Step2b-edges-of-v-agree}
    If the process terminates in Step~\ref{steptwob}, then there is a non-negative integer~$i$ such that $G[\In(\F_i^\ord)]$ has a giant component containing all vertices in~$\partial_G(F_i)$. Also, $\F^\ord$ and $\F^+$ agree on the edges that are incident to~$v$.
\end{observation}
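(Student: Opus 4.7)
The plan is to prove the two assertions separately. For the first assertion, the process reaches Step~\ref{steptwob} at iteration $i$ exactly when $W_i=\emptyset$, meaning every vertex of $\partial_G(F_i)$ lies in a component of $G[\In(\F_i^\ord)]$ of size at least $n/2$. Since the process did not terminate at Step~\ref{steponea}, we have $|\In(\F_0^\ord)|\geq (1-\eta)|E|$, and Inv\ref{inv6} propagates this to $|\In(\F_i^\ord)|\geq (1-\eta)|E|$. Corollary~\ref{cor:big} then supplies a (necessarily unique) giant component in $G[\In(\F_i^\ord)]$ of size at least $(1-\delta)|V|>n/2$, which must therefore contain every vertex of $\partial_G(F_i)$.

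For the second assertion, I would use the fact that, for any partial configuration $\A$ with $R(\A)=F_i$, the conditional RC distribution on the unrevealed edges $E\setminus F_i$ depends on $\In(\A)$ only through the equivalence relation it induces on the set $U$ of endpoints of unrevealed edges. Concretely, writing $X := \In(\F)\cap (E\setminus F_i)$, one decomposes
\[
c(\In(\A)\cup X) \;=\; c_{\mathrm{ext}}(\A) \;+\; \tilde c(\tilde G_\A, X),
\]
where $c_{\mathrm{ext}}(\A)$ counts the components of $\In(\A)$ disjoint from $U$ (constant in $X$) and $\tilde c(\tilde G_\A, X)$ is the number of components of the multigraph obtained by contracting each remaining $\In(\A)$-component to a single super-vertex and retaining only the $X$-edges (whose endpoints all lie in $U$). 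The conditional weight $q^{c(\In(\A)\cup X)}(\emm^\beta-1)^{|\In(\A)|+|X|}$ therefore depends on $\A$, up to a multiplicative constant, only through the equivalence relation $\sim_\A$ on $U$ induced by $\In(\A)$.

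It remains to check that $\sim_{\F_i^\ord}$ and $\sim_{\F_i^+}$ coincide on $U$. Any $u\in U\setminus \partial_G(F_i)$ is incident only to unrevealed edges, so it is an isolated vertex in both $G[\In(\F_i^\ord)]$ and $G[\In(\F_i^+)]$. By the first assertion every vertex of $\partial_G(F_i)$ lies in the giant component of $G[\In(\F_i^\ord)]$; by the monotone-coupling invariant $\In(\F_i^\ord)\subseteq \In(\F_i^+)$ from Inv\ref{inv1}, the same vertices all lie in a single component of $G[\In(\F_i^+)]$. Thus the two equivalence relations on $U$ agree, the conditional distributions of $X$ under $\pi_{\F_i^\ord}$ and $\pi_{\F_i^+}$ coincide, and the optimal coupling of Step~\ref{steptwob} can be chosen so that $\F^\ord$ and $\F^+$ agree on every edge of $E\setminus F_i$. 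Since the loop did not exit via the condition $d_G(F_i,v)\leq r_2$, we have $d_G(F_i,v)>r_2\geq 0$, so every edge incident to $v$ lies in $E\setminus F_i$ and the conclusion follows.

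The main obstacle is the contracted-weight decomposition above; although it is a routine combinatorial verification, care is required because a single component of $\In(\A)$ can straddle both $U$ and its complement, and the contraction must treat such a component as one super-vertex in $\tilde G_\A$. Once that is in place, the monotonicity invariant from Inv\ref{inv1} immediately yields the matching of equivalence classes on $U$, and hence of the marginals on every unrevealed edge.
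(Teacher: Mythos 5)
Your proof is correct and follows essentially the same route as the paper: non-emptiness of the giant component plus $W_i=\emptyset$ puts all of $\partial_G(F_i)$ in one class, the monotonicity invariant $\In(\F_i^\ord)\subseteq\In(\F_i^+)$ transfers this to $\F_i^+$, and equality of the induced boundary partitions forces the conditional marginals on the unrevealed edges (in particular those at $v$, which are unrevealed since $d_G(F_i,v)>r_2\geq 0$) to coincide. The only difference is that you re-derive the marginal-equality step via your contraction decomposition, whereas the paper simply invokes its Observation~\ref{obs:same-connectivity-boundary-implies-same-marginals}, whose proof uses the same component-splitting idea.
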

\begin{proof}
The only way that the process can get to Step~\ref{steptwob}, with some index $i$, is if $d_G(F_i, v) > r_2$ and $W_i = \emptyset$. In this case all vertices of the boundary $\partial_G(F_i)$ are contained in the giant component of $G[\In(\F_i^\ord)]$.

Since $r_2 \geq 0$, $d_G(F_i, v) > r_2 \geq 0$, so the edges that are incident to~$v$ are not in~$F_i$. Since $G$ is connected, the boundary $\partial_G (F_i)$ is non-empty.    

Thus $\xi(\F_{i}^\ord)$ consists of a single equivalence class containing all vertices of~$\partial_G(F_i)$. By Inv\ref{inv1}($i$), $\In(\F_i^\ord)\subseteq\In(\F_i^+)$ and $R(\F_i^\ord) = R(\F_i^+)$, 
so  $G[\In(\F_i^+)]$ has a giant component containing all vertices in~$\partial_G(F_i)$ and $\xi(\F_i^+)$ contains a single equivalence class, containing all vetices in~$\partial_G(F_i)$. By Observation~\ref{obs:same-connectivity-boundary-implies-same-marginals}, $\pi_{\F_i^\ord}$ and $\pi_{\F_i^+}$ have the same projection on $E\backslash F_i$, meaning that for any partial configuration $\A\in \Omega_{E\backslash F_i}$, $\pi_{\F_i^\ord}(\F_i^\ord \cup \A) = \pi_{\F_i^+}(\F_i^+ \cup\A)$. Since the coupling in Step~\ref{steptwob} is optimal,  $\F^\ord$ and $\F^+$ agree on the edges incident to~$v$ (since they are not in $F_i$).
\end{proof}

\begin{observation}\label{obs:path-from-w_i-to-r_1-distance}
Suppose that Step~\ref{steptwoa} is executed with index~$i$ and let  $k = r_1 - d_G(w_i, v)$. There are non-negative integers $j_0,\ldots,j_k$
with $j_k=i$ such that, for every
$\ell\in \{0,\ldots,k-1\}$,  
$j_{\ell} < j_{\ell+1}$ and
$w_{j_{\ell+1}} = p_{j_\ell}$.
\end{observation}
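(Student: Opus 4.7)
The plan is to induct on $k = r_1 - d_G(w_i, v)$, tracing the history of how $w_i$ came to enter the boundary set $V_i$ in the coupling process. The backward trace is the essence: since $w_i \in V_i$, either $w_i$ sat in $V_0$ from the start, or it was added to the boundary at some earlier iteration as $p_{j'}$ (with $w_{j'}$ a child of $w_i$ in $T$). Iterating this dichotomy walks an ancestor path in $T$ toward the original depth-$r_1$ boundary, and the distance from $v$ increases by exactly one at every step of the trace, so the chain has length exactly $k$.

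The key auxiliary fact needed is a structural lemma: for every iteration $i$ of the process, every vertex $u \in V_i$ is either an element of $V_0$ (and hence has $d_G(u,v) = r_1$) or equals $p_{j'}$ for some $j' < i$ (in which case $d_G(u,v) = d_G(w_{j'}, v) - 1 \leq r_1 - 1$, since $w_{j'} \in V_{j'} \subseteq V(T)$ so $d_G(w_{j'}, v) \leq r_1$). This is a routine induction on $i$ driven by the update rule $V_{i+1} = (V_i \setminus V(T_{p_i})) \cup \{p_i\}$ from Step~\ref{steptwoa}; new vertices are only ever inserted as $p_i$. A consequence is the useful dichotomy: if $d_G(w_i, v) = r_1$ then necessarily $w_i \in V_0$, and if $d_G(w_i, v) < r_1$ then there exists $j' < i$ with $p_{j'} = w_i$.

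With this in hand, the base case $k = 0$ is immediate: $d_G(w_i, v) = r_1$ forces $w_i \in V_0$, and the singleton chain $j_0 := i$ trivially satisfies the (vacuous) consecutive-pair condition. For the inductive step $k \geq 1$, we have $d_G(w_i, v) < r_1$, so the auxiliary fact supplies some $j_{k-1} < i$ with $p_{j_{k-1}} = w_i$; observe that Step~\ref{steptwoa} must have been executed at iteration $j_{k-1}$ (otherwise $p_{j_{k-1}}$ would be undefined), which is exactly what the inductive hypothesis requires. Since $w_{j_{k-1}}$ is a child of $w_i = p_{j_{k-1}}$ in $T$, we get $d_G(w_{j_{k-1}}, v) = d_G(w_i, v) + 1$, so $r_1 - d_G(w_{j_{k-1}}, v) = k-1$. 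Applying the inductive hypothesis at iteration $j_{k-1}$ yields a strictly increasing chain $j_0 < j_1 < \cdots < j_{k-1}$ with the required consecutive-parent relations; appending $j_k = i$ and using $w_{j_k} = w_i = p_{j_{k-1}}$ completes the chain.

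I do not expect any serious obstacle: the argument is purely combinatorial/bookkeeping on the recursion defining $V_i$ and $p_i$, and no probabilistic or geometric input is used beyond Observation~\ref{obs:excess} (to equate $d_G$ with $d_{T_0}$ inside $B_r(v)$, so that parenthood in $T$ really does decrement the distance from $v$ by exactly one). The only minor care is ensuring, in the auxiliary lemma, that a vertex $u$ which is temporarily removed from some $V_j$ and later reappears in $V_{i}$ still fits the dichotomy—this is automatic because the only mechanism for insertion into $V$ is as some $p_{j'}$, so the second clause of the dichotomy persists.
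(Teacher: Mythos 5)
Your proposal is correct and follows essentially the same route as the paper: induction on $k$, using the fact that membership in $V_i$ arises either from $V_0$ (distance exactly $r_1$) or from insertion as some $p_{j'}$ with $j'<i$, which the paper invokes directly as ``by the construction of $V_i$'' while you package it as an explicit auxiliary lemma. The extra bookkeeping you flag (that Step~\ref{steptwoa} must have run at iteration $j_{k-1}$ for $p_{j_{k-1}}$ to exist, and that parenthood in $T$ decrements distance by one via Observation~\ref{obs:excess}) matches the paper's reasoning.
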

\begin{proof}
By construction, $w_i\in\partial_G(F_i)$ and $d_G(F_i,v) > r_2$. By Inv\ref{inv4}($i$), $\partial_G(F_i) \subseteq V_i\subseteq V(T)$. Hence $d_G(w_i, v) \leq r_1$, so $k \geq 0$. The proof is by induction on $k$.
    
For the base case, $k = 0$, it suffices to define $j_0 := i$ and the condition in the statement of the observation is vacuous.

For the induction step, fix $k\geq 0$ and assume that the observation holds whenever Step~\ref{steptwoa} is executed with any index~$j$ such that $r_1 - d_G(w_j, v)\leq k$. Suppose that Step~\ref{steptwoa} is executed with index~$i$ such that $r_1 - d_G(w_i, v) =  k+1$. Define $j_{k+1} := i$. By Invariant~Inv\ref{inv4}($i$), $w_i\in V_i$. By the construction of $V_i$, either $d_G(w_i, v) = r_1$, or there is an index $i'<i$ such that $w_i = p_{i'}$. The first case is ruled out by $d_G(w_i, v) = r_1 - (k+1) < r_1$. In the second case, $w_i$ is the parent of~$w_{i'}$ in the BFS tree, so  $d_G(w_{i'}, v) = d_G(w_i, v) + 1$ and $k= r_1 - d_G(w_{i'},v)$. We finish by applying the inductive hypothesis with index $i'$.
\end{proof}

Equipped with these results, we can now prove the following lemma.
 
\begin{lemma}\label{lemm:agree-on-edges-of-v-or-large-polymer}
Fix $\Delta\geq 5$ and $K, M>0$.  Suppose that $\beta \geq 3 M$.
Suppose that $n$ is sufficiently large so  that $r:= \tfrac\M\beta \log_{\Delta-1} n >K$.
and $|B_{r}(v)| \leq 9 \Delta n/200$. 
Define $r_1$ as in Observation~\ref{obs24} so that
$r_1\geq\frac{r}{K+1}-1$. 
Let $\F^\ord$ and $\F^+$ be generated by the process. Then at least one of the following conditions holds.
\begin{enumerate}
\item \label{condone} $\F^\ord$ and $\F^+$ agree on the edges that are incident to~$v$.
\item \label{condtwo} $|\In(\F^\ord)\setminus E(B_{r_1}(v))| < (1-\eta) |E|$.
\item \label{condthree} $\F^\ord$ contains a polymer of size at least $\frac{r}{400\Delta(1+K)}-1$.
\end{enumerate}
\end{lemma}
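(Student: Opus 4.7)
The proof splits naturally on how the coupling process terminates. If it terminates in Step~\ref{steponea}, then by construction $|\In(\F_0^\ord)|<(1-\eta)|E|$, and because $R(\F_0^\ord)=E\setminus E(B_{r_1}(v))$ and $\F_0^\ord\subseteq \F^\ord$, this gives $|\In(\F^\ord)\setminus E(B_{r_1}(v))|=|\In(\F_0^\ord)|<(1-\eta)|E|$, which is condition~\ref{condtwo}. If it terminates in Step~\ref{steptwob}, condition~\ref{condone} is immediate from Observation~\ref{obs:if-end-in-Step2b-edges-of-v-agree}. The remaining case, termination at Step~\ref{stepthree}, is where the real work lies and my goal is to establish condition~\ref{condthree}.

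In this case, $d_G(F_i,v)\leq r_2$ holds at some iteration~$i$, while $d_G(F_0,v)=r_1>r_2$. Hence the drop below $r_2$ must first occur at some iteration $i-1\geq 0$ via Step~\ref{steptwoa}, which added $E(T_{p_{i-1}})$ to $F_{i-1}$; since the minimum-depth edge of $E(T_{p_{i-1}})$ is incident to $p_{i-1}$, we get $d_G(w_{i-1},v)=d_G(p_{i-1},v)+1\leq r_2+1$. Applying Observation~\ref{obs:path-from-w_i-to-r_1-distance} at index $i-1$ with $k:=r_1-d_G(w_{i-1},v)\geq r_1-r_2-1\geq \tfrac{r}{K+1}-2$ extracts indices $j_0<j_1<\dots<j_k=i-1$ such that $w_{j_{\ell+1}}$ is the parent in~$T$ of $w_{j_\ell}$; thus $w_{j_0},\dots,w_{j_k}$ form a root-to-leaf path $P$ of length~$k$ in~$T$. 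For each $\ell$, Invariant~Inv\ref{inv6} gives $|\In(\F_{j_\ell}^\ord)|\geq(1-\eta)|E|$, so $w_{j_\ell}\in W_{j_\ell}$ combined with Corollary~\ref{cor:big} forces the component $C_\ell\ni w_{j_\ell}$ in $G[\In(\F_{j_\ell}^\ord)]$ to have at most $\delta|V|$ vertices.

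To extract a large polymer of $\F^\ord$, my plan is to identify a small set $S\subseteq \In(\F^\ord)$ of ``bridge'' in-edges (those $\F^\ord$-in-edges connecting some $C_\ell$ to $V\setminus C_\ell$) and to verify that removing~$S$ leaves each $w_{j_\ell}$ in a component of size $<n/2$ in~$G[\In(\F^\ord)\setminus S]$. Lemma~\ref{lem:shortname} Item~\ref{item:three} then bounds the number of polymers of $\F^\ord$ meeting the small components that contain path vertices by $O(\Delta^2|S|)$; a pigeonhole over the $k+1$ vertices of~$P$ yields a single polymer containing $\Omega\!\big(k/(\Delta^2|S|)\big)$ of them, and since polymers are connected, this polymer has size $\Omega\!\big(k/(\Delta^2|S|)\big)$. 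Tracking constants and requiring $|S|$ to be bounded by a quantity depending only on $K$ and $\Delta$ should recover the target polymer size $\tfrac{r}{400\Delta(1+K)}-1$.

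The main obstacle is bounding $|S|$ tightly. I plan to use three ingredients: (i) $F_0$ already reveals all edges incident to vertices outside~$V(B_{r_1}(v))$, so any merge of a $C_\ell$ with the giant must be witnessed by tree edges internal to~$B_{r_1}(v)$; (ii) $B_{r_1}(v)$ has tree-excess at most~$K$, limiting how many independent non-tree paths can contribute bridges; and (iii) the greedy selection of $w_{j_\ell}$ as deepest in~$W_{j_\ell}$, which forces the $C_\ell$'s to respect the tree order along~$P$ and prevents uncontrolled growth of the bridging set across iterations. Once $|S|=O_{K,\Delta}(1)$ is secured, absorbing constants delivers the claimed polymer-size bound.
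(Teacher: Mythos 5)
Your handling of the first two termination cases is correct and matches the paper. The gap is in the Step~\ref{stepthree} case, specifically in your treatment of the set $S$ of ``bridge'' in-edges. You define $S$ as the $\F^\ord$-in-edges connecting a small component $C_\ell$ of $G[\In(\F^\ord_{j_\ell})]$ to its complement, and your whole extraction of a large polymer rests on securing $|S|=O_{K,\Delta}(1)$. This cannot be secured: the bridges are exactly the parental edges (in $T$) of vertices of $C_\ell\cap\partial_G(F_{j_\ell})\subseteq V_{j_\ell}$ that later get revealed as in-edges, and $C_\ell$ may contain polynomially many vertices of $V_{j_\ell}$ (it can have up to $\delta|V|$ vertices, and nothing in the tree-excess bound $K$ or the greedy choice of $w_{j_\ell}$ controls how many roots of already-revealed subtrees it absorbs — the excess edges only constrain non-tree adjacencies, while these bridges are tree edges). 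With $|S|$ unbounded, your pigeonhole over the $k+1$ path vertices yields a polymer of size $\Omega(k/(\Delta^2|S|))$, which is vacuous.

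The paper's resolution is different in kind: rather than bounding $|S|$, it proves (Claim~2a via Claims~2b--2e) that the component $\kappa'$ of $w_j$ in $G[\In(\F^\ord)\setminus S]$ satisfies $|V(\kappa')|\geq |S|\,r'$ — each bridge is the parental edge of a distinct vertex $u_1^\gamma\in V_j\cap W_j$, whose subtree $T_{u_1^\gamma}$ contributes at least $r'$ vertices to $\kappa'$ because the deepest-first choice of $w_j$ forces $d_G(u_1^\gamma,v)\leq r_1-r'$. The pigeonhole is then taken over all of $V(\kappa')$, so the ratio $\frac{|S|r'-45\Delta|S|}{50\Delta^2|S|}$ is independent of $|S|$. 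The paper also needs a three-way case split you have not anticipated: the path may lie entirely outside the giant component of $G[\In(\F^\ord)]$ (where Item~\ref{item:one} of Lemma~\ref{lem:shortname} puts the whole path in one polymer directly), or some path vertex may sit in the giant but be separable by the single edge $\{w_k,p_k\}$ (where $S$ is a singleton and a different counting applies). Without the $|V(\kappa')|\geq|S|r'$ inequality, or some substitute that decouples the polymer-size bound from $|S|$, your argument does not close.
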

\begin{proof}
By Observation~\ref{obs:distributions are correct}, the process eventually terminates. If it terminates in Step~\ref{steponea}, then $|\In(\F^\ord)\setminus E(B_{r_1}(v))| < (1-\eta)|E|$, thus Condition~\ref{condtwo} is satisfied. If the process terminates in Step~\ref{steptwob}, then by Observation~\ref{obs:if-end-in-Step2b-edges-of-v-agree}, Condition~\ref{condone} holds.
    
For the rest of the proof we assume that the process terminates in Step~\ref{stepthree}. 
We will show that Condition~\ref{condthree} holds.
Let $i'$ be the index that triggers the condition $d_G(F_{i'}, v) \leq r_2$ in Step~\ref{steptwo}. Let $i := i' - 1$ so that step~\ref{steptwoa} is run for the last time with index~$i$. By the choice of~$i'$, $d_G(F_i, v) > r_2$. Since $p_{i}$ is an endpoint of an edge in~$F_{i'}$, $d_G(p_i,v) \leq r_2$. Since $w_i\in \partial_G(F_i)$, $d_G(v,w_i) > r_2$. Since  $d_G(w_i, v) = d_G(p_i, v) + 1$, we conclude that $d_G(w_i, v) = r_2 + 1$.  By Observation~\ref{obs:path-from-w_i-to-r_1-distance} with index~$i$ and $k = r_1 - (r_2+1)$, there are non-negative integers $\{j_0,\ldots,j_k\}$ with $j_k=i$ such that, for every $\ell\in \{0,\ldots,k-1\}$, $j_\ell < j_{\ell+1}$ and $p_{j_\ell} = w_{j_{\ell+1}}$. Thus $w_{j_{\ell+1}} \in V_{\ell+1}$.  Note that $w_{j_0},\ldots,w_{j_k}$ is a path in~$T$ and that $w_{j_k}$ is closest to the root~$v$ in this path. By Observation~\ref{obs24} and the definition of~$k$, $k \geq r/(K+1)-2$. Let $r' = \lfloor k / 2\rfloor$,  $J' = \{j_{\ell} \mid r' \leq \ell \leq k\}$ and $W' = \{w_j \mid  j\in J'\}$. Since $V(T) \subseteq V(B_{r+1}(v))$, we have established that for all $w\in W'$ $r_2+1 \leq d_G(v,w) \leq r_1$.  
We now distinguish three cases.
    
\noindent{\bf Case 1.} 
For all $j\in J'$, $w_j$ is not in  a giant component of $G[\In(\F^\ord)]$.
            
Let $F'$ be the set of edges with endpoints in $W'$ and note that $F'$ contains the edges in the path $w_{r'},\ldots,w_{k}$. 
By Observation~\ref{obs:dist}, $\F^\ord \in \Omega^\ord$.
Applying Item~1 of~Lemma \ref{lemm:vertices-in-small-components-have-all-edges-in-single-polymer} to each vertex $u\in W'$, we find that every edge in~$F'$ is contained in a polymer of~$\F^\ord$. Since the vertices in~$W'$ form a path in~$G$, the edges in~$F'$ are all in the same polymer of~$\F^\ord$. Thus, $\F^\ord$ has a polymer of size at least $k/2 \geq  r/(2(K+1))-1$.
        
\noindent{\bf Case 2.} 
There is an index $j \in J'$ such that $w_j$ is in 
the giant component of $G[\In(\F^\ord)]$ 
and the giant component of $G[\In(\F^\ord)\setminus\{w_j, p_j\}]$.
    
We make the following claim. 
    
\noindent{\bf Claim~2a: }{\sl  
There is then a nonempty set~$S$ of at most $9 \Delta |V|/200$ edges of~$G$ such that the size of the component~$\kappa'$ of~$w_j$ in the graph $G[\In(\F^\ord)\backslash S]$ is 
less than $n/2$ and furthermore
$|V(\kappa')| \geq |S|r'$.}

Before proving Claim~2a, we show that it implies Condition~\ref{condthree}, completing the proof of Case~2. 
By Lemma \ref{lemm:vertices-in-small-components-have-all-edges-in-single-polymer}, item~\ref{item:two}, 
all but at most $45\Delta|S|$ vertices of~$\kappa'$ are such that all their incident edges belong to polymer. Thus,
at least $|S|r' - 45\Delta|S|$ vertices of $\kappa'$ are such that all their incident edges are in a polymer.
By Lemma \ref{lemm:vertices-in-small-components-have-all-edges-in-single-polymer}, item~\ref{item:three}, there are 
at most $50\Delta^2|S|$ polymers of $\F^\ord$ containing vertices of~$\kappa'$.
We conclude that there is a polymer of $\F^\ord$ containing at least 
$\frac{|S|r' - 45\Delta|S|}{50\Delta^2|S|}$ vertices of $\kappa'$ with the property that all of their incident edges are in the polymer.
 
Since the number of edges incident to a set of size~$z$ is at least $\Delta z/2$,   
there is a polymer of $\F^\ord$ containing a vertex of $B_{r_1}(v)$ with size at least
$$\frac{\Delta}{2} \times \frac{|S|r' - 45\Delta|S|}{50\Delta^2|S|} \geq \frac{r' - 45\Delta}{100\Delta} \geq \frac{k/2 - 50\Delta}{100\Delta} \geq \frac{r}{400\Delta(1+K)} - 1.$$

To conclude the proof of Case~2, we prove Claim~2a.    
Since the process ends in Step~\ref{stepthree}, Step~\ref{steponea} does not occur, so $|\In(F^\ord_0)|\geq (1-\eta)|E|$. 
Since $\F^{\ord}_0\subseteq \F^\ord_j$ from Inv\ref{inv6},   $\In(F^{\ord}_0)\subseteq \In(F^\ord_j)$, so $|\In(F^\ord_j)|\geq (1-\eta)|E|$. By Corollary \ref{cor:big}, $G[\In(\F^\ord_j)]$ has a (unique) giant component. Call this component $\kappa_j$. Similarly, $G[\In(\F^\ord)]$ has a giant component, call this component $\kappa$. Note that $V(\kappa_j)\subseteq V(\kappa)$. Since $w_j$ was chosen in Step~\ref{steptwoa},   $w_j\in V(\kappa)\backslash V(\kappa_j)$.  
    
Let $\Gamma$ be the set of all paths from $w_j$ to $V(\kappa_j)$ in the graph $G[\In(\F^\ord)]$. Every path $\gamma\in\Gamma$ contains an edge in $E\setminus F_j$. Let $\{u_1^\gamma, u_2^\gamma\}$ be the first such edge in $\gamma$, with $d_G(u_1^\gamma,w_j) < d_G(u_2^\gamma,w_j)$.
Let $S:= \{ \{u_1^\gamma,u_2^\gamma\} \mid \gamma \in \Gamma\}$. 

By the definition of $S$, any path from $w_j$ to $\kappa_j$ goes through an edge in $S$, thus $w_j$ is not connected to $\kappa_j$ in $G[\In(\F^\ord)\setminus S]$. Let $\kappa'$ be the component~of $w_j$ in $G[\In(\F^\ord)\setminus S]$. The component~$\kappa'$ contains no vertices of $\kappa_j$. Since $|V(\kappa_j)|>n/2$, we get $|V(\kappa')| < n/2$.

We conclude by we proving that $|V(\kappa')\cap B_{r_1}(v)|\geq |S|r'$.
The proof follows from a sequence of claims.

\noindent{\bf Claim 2b:}  For any $\gamma \in \Gamma$, $u_1^\gamma \in \partial_G(F_j)$ and $d_G(v,u_1^\gamma) > r_2$. 

The proof of Claim~2b is as follows. If $u_1^\gamma = w_j$, then $u_1^\gamma\in\partial_G(F_j)$ by the choice of $w_j$. Otherwise, 
since $u_1^\gamma$ is the first edge on~$\gamma$ that is not in $E\setminus F_j$, the edges on the path from $w_j$ to~$u_1^\gamma$ are in~$F_j$, so
there is a at least one edge from $F_j$ incident to $u_1^\gamma$, and the edge $\{u_1^\gamma, u_2^\gamma\}\notin F_j$ is also incident to $u_1^\gamma$, so  $u_1^\gamma \in \partial_G(F_j)$. In either case,
$d_G(v,u_1^\gamma) \geq d_G(v,F_j)> r_2$.

\noindent{\bf Claim 2c:} For any $\gamma\in\Gamma$, 
$d_G(v,u_1^\gamma) \leq r_1$, $d_G(v,u_2^\gamma) \leq r_1$,
$u_1^\gamma \in V_j$, and
$u_2^\gamma$ is the parent of $u_1^\gamma$ in $V(T)$

To prove Claim~2c, consider $\gamma \in \Gamma$.
Since the edge $\{u_1^\gamma,u_2^\gamma\} \in E\setminus F_j$, 
both~$u_1^\gamma$ and~$u_2^\gamma$ are in 
$V(B_{r_1}(v))=V(T)$. Since Step~\ref{steptwo} is executed with index~$j$, $d_G(F_j,v) > r_2$. 
So by Claim~2b and Inv\ref{inv4}($j$),   $u_1^\gamma\in V_j$. 
By Invariant~Inv\ref{inv3}($j$), $E(T_{u_1^\gamma})\subseteq F_j$, so the only candidate for
$u_2^\gamma$ is the parent of $u_1^\gamma$ in~$T$.
 
\noindent{\bf Claim 2d:} $|S| =  |\{u_1^\gamma \mid \gamma\in\Gamma\}|$.

By Claim~2c, the mapping of a vertex $u \in \{u_1^\gamma \mid \gamma\in\Gamma\}$ 
to its parental edge in~$T$ is a bijection 
between $ \{u_1^\gamma \mid \gamma\in\Gamma\}$
and~$S= \{ \{u_1^\gamma,u_2^\gamma\} \mid \gamma \in \Gamma\}$.

\noindent{\bf Claim 2e:} For any $\gamma\in\Gamma$, $|V(\kappa')\cap V(T_{u_1^\gamma})| \geq r'$. 

To prove Claim~2e, consider $\gamma \in \Gamma$. We consider two cases. First, suppose $u_1^\gamma\neq w_j$. 
By construction $w_j \in V_j$ and from Claim~2c, $u_1^\gamma\in V_j$. 
By Inv\ref{inv2}($j$), $V(T_{w_j})\cap V(T_{u_1^\gamma}) = \emptyset$.
At the beginning of the proof, we established $r_2+1 \leq d_G(v,w_j) \leq r_1$.
By Claims~2b and~2c, $r_2 + 1 \leq d_G(v,u_1^\gamma) \leq r_1$.
Since the path~$\gamma$ goes from $w_j$ to $u_1^\gamma$ without leaving~$F_j$,
it goes from~$w_j$ to a leaf of $T_{w_j}$ and it finishes by going from a leaf of $T_{u_1^\gamma}$ 
to~$u_1^\gamma$. All of these edges are in~$\kappa'$ and there are at least $r_1 - d_G(v,u_1^\gamma)$ of them. Since, by Claim~2b, $u_1^\gamma \in \partial_G(F_j)$ but, by construction, it is not in~$\kappa_j$,
by the definition of~$W_j$, $u_1^\gamma \in W_j$. By the choice of $w_j$, 
$d_G(u_1^\gamma, v) \leq d_G(w_j, v) \leq r_1 - r'$. Thus $|V(\kappa')\cap V(T_{u_1^\gamma})|\geq r_1 - (r_1 - r') = r'$. 

For the second case, suppose  $u_1^\gamma = w_j$. By the assumption of Case~2, $w_j$ is in a giant component of $G[\In(\F^\ord)\setminus\{w_j, p_j\}]$, thus 
$S$ contains an edge other than the edge 
$\{u_1^{\gamma'},u_2^{\gamma'}\}$ with $\gamma' \in \Gamma$ that is not equal to the edge
$\{w_j,p_j\}$. By Claim~2c, $u_1^{\gamma'} \neq w_j$. Applying the argument from the first case to~$\gamma'$, the path $\gamma'$ starts by going from~$w_j$ to a leaf of $T_{w_j}$ 
so it contains at least $r'$ edges of $T_{w_j}$, all of which are in~$\kappa_j$.

We now use Claims~2d and 2e to finish the proof that $|V(\kappa')| \geq |S| r'$, which completes the proof of Claim~2a, and hence the proof of Case~2.
Consider paths~$\gamma$ and~$\gamma'$ in~$\Gamma$ with $u_1^\gamma \neq u_1^{\gamma'}$.
By Inv\ref{inv2}($j$), $V(T_{u_1^\gamma})\cap V(T_{u_1^{\gamma'}})=\emptyset$, therefore $|V(\kappa')| \geq \sum_{u\in\{u_1^\gamma\mid\gamma\in\Gamma\}} |V(\kappa')\cap V(T_{u_1^\gamma})|$. Claim~2e shows
that each term in the sum  is at least $r'$.
Claim~2d shows that $|S|$ is equal to the number of terms in the sum. Therefore we obtain
$|V(\kappa')| \geq |S| r'$, as required.

\noindent{\bf Case 3.} 
There is an index $j \in J'$ such that $w_j$ is in the giant component of $G[\In(\F^\ord)]$. However, for all $j\in J'$, $w_j$ is not in a giant component of $G[\In(\F^\ord)\setminus\{w_j, p_j\}]$.
    
Let $\kappa$ be the giant component of~$G[\In(\F^\ord)]$. Let $j_\ell = \min\{j \in J' \mid w_j \in V(\kappa)\}$.
    
\noindent{\bf Claim 3: }{\sl  
For each $\ell'$ satisfying $\ell \leq \ell' < k$, $w_{j_{\ell'}}\in V(\kappa)$ and  $\{w_{j_{\ell'}},w_{j_{\ell'+1}}\} \in \In(\F^\ord)$.
} 

We prove Claim~3 by induction on~$\ell'$. The base case is $\ell' = \ell$. In this case, $w_{j_{\ell}}\in V(\kappa)$ is from the definition of~$j_\ell$.
Then since $w_{j_\ell}$ is in a giant component of $G[\In(\F^\ord)]$ but is not in a giant component of $G[\In(\F^\ord)\backslash\{w_{j_\ell}, p_{j_{\ell}}\}]$, the edge $\{w_{j_\ell}, p_{j_{\ell}}\} = \{w_{j_\ell}, w_{j_{\ell+1}}\} \in \In(\F^\ord)$, as required. 
For the induction step, fix $\ell'$ satisfying $\ell \leq \ell' < k-1$ and assume Claim~3 for~$\ell'$.
Since Claim~3 implies $w_{j_{\ell'}}\in\kappa$ and $\{w_{j_{\ell'}}, w_{j_{\ell'+1}}\}\in\In(\F^\ord)$, $w_{j_{\ell'}}$ and $w_{j_{\ell'+1}}$  are in the same component of $G[\In(\F^\ord)]$, namely, $\kappa$.  Then since $w_{j_{\ell'+1}}$ is in a giant component of  $G[\In(\F^\ord)]$ but is not in a giant component of $G[\In(\F^\ord)\backslash \{ w_{j_{\ell'+1}}, p_{j_{\ell'+1}}\}]$, the edge $\{w_{j_{\ell'+1}}, p_{j_{\ell'+1}}\} = \{w_{j_{\ell'+1}}, w_{j_{\ell'+2}}\} \in \In(\F^\ord)$, as required. 
    
Claim~3 implies that for every $j$ satisfying $j_\ell \leq j \leq k$, $w_j\in V(\kappa)$. Let $e = \{w_k,p_k\}$. Since $w_k$ is not in a giant component of $G[\In(\F^\ord)\setminus \{e\}]$ and the edge~$e$ is not on the path $w_{j_\ell},\ldots,w_k$ it follows that no vertices in~$W'$ are in a giant component of $G[\In(\F^\ord)\setminus \{e\}]$. We now consider two cases.
    
\noindent{\bf Case 3a.}  $k - \ell  \geq r'/2 $. 
    
Take $S = \{e\}$. Suppose that $|V|$ is sufficiently large that $|S| < 9\Delta|V|/200$. 
For any $j\in J'$ let $\kappa_j$ be the component containing $w_j$ in $G[\In(\F^\ord)\setminus\{w_j,p_j\}]$. Item~\ref{item:two} of Lemma~\ref{lem:shortname} implies that all but at most~$45\Delta$ vertices of~$\kappa_j$ are such that all their incident edges belong to a polymer of~$\F^\ord$. Item~\ref{item:three} implies that there are at most $50 \Delta^2 $ polymers containing vertices of~$\kappa_j$.
Since $k-\ell\geq r'/2$, we conclude that there are at least $ r'/2 - 45\Delta$ vertices of $W'\cap V(\kappa)$ such that all their incident edges are in a polymer, and at most $50\Delta^2$ polymers containing them. Hence there a polymer of size at least 
$\frac{\Delta}{2} \times\frac{r'/2 - 45\Delta}{50\Delta^2}\geq \frac{r' - 90\Delta}{200\Delta} \geq \frac{r}{400\Delta(1+K)}-1$. 
    
\noindent{\bf Case 3b.} $k - \ell  <  r'/2$.
    
We have already seen that  
$w_{r'},\ldots,w_{\ell-1}$ is a path in~$G$ with length at least $\ell - r' \geq r'/2$ and its vertices are not in $V(\kappa)$. By Item~1 of~Lemma \ref{lemm:vertices-in-small-components-have-all-edges-in-single-polymer}, every edge incident to one of these vertices is contained in a polymer of~$\F^\ord$. Since the vertices form a path in~$G$, the edges adjacent to them are all in the same polymer of~$\F^\ord$. Thus, $\F^\ord$ has a polymer of size at least $(r'/2) \geq \frac{(k/2)-1}{2} \geq \frac{r}{4(K+1)}-1$.
\end{proof}

Next we will show that the termination condition from Step~\ref{stepone} of the process is unlikely to happen.

\begin{lemma}\label{cor:large-q-many-edges-outside-a-ball}\label{cor:eee} 
Let $\Delta\geq 5$ and $K\geq 0$ be integers. Let $\delta\in(0,1/2)$ be a real number. 
Let $\eta = \min\{\delta/5,1/100\}$
There are positive real numbers   $q_0\geq 1$ and $n_0$ such that the following holds for all $q\geq q_0$ and all $\beta \geq \beta_c$.  
Let $G=(V,E)$. Let $n=|V|$ and let $v$ be a vertex in $V$. Let $r$ be a real number satisfying $r \leq\tfrac{1}{3}\log_{\Delta-1}(n)$.   Then 
$\pi_G^\ord(|\In(\F)\backslash E(B_r(v))| < (1-\eta)|E|) =  e^{-\Omega(n)}$.
\end{lemma}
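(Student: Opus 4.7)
The plan is to derive this lemma essentially as a direct corollary of the slack encoded in Equation~\eqref{eq:margin} of Lemma~\ref{lem:dominant} (equivalently, Lemma~\ref{lem:slack}). That result furnishes a constant $\zeta=\zeta(\Delta)>0$ with $\zeta<\eta$ such that, for all sufficiently large $q$ and all $\beta\geq \beta_c$, we have $\pi_G^\ord\big(|\In(\F)|<(1-\zeta)|E|\big)=e^{-\Omega(n)}$ w.h.p. over $G\sim\G_{n,\Delta}$. The gap $\eta-\zeta>0$ is a constant depending only on $\Delta$ (and $\delta$), and this is precisely what we will use to absorb the edges lying inside the ball $B_r(v)$.

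Next I would bound $|E(B_r(v))|$ using the degree bound and the hypothesis $r\leq \tfrac{1}{3}\log_{\Delta-1}n$. A crude estimate gives $|B_r(v)|\leq 1+\Delta\sum_{i=0}^{r-1}(\Delta-1)^i\leq \Delta(\Delta-1)^r\leq \Delta\, n^{1/3}$, and therefore $|E(B_r(v))|\leq \tfrac{\Delta}{2}|B_r(v)|\leq \tfrac{\Delta^2}{2}n^{1/3}$. Since $|E|=\Delta n/2$, for any $n\geq n_0$ (sufficiently large in terms of $\Delta,\eta,\zeta$), we have $|E(B_r(v))|\leq (\eta-\zeta)|E|$.

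Now the deterministic implication is immediate: whenever $\F$ satisfies $|\In(\F)|\geq (1-\zeta)|E|$, we have
\[
|\In(\F)\setminus E(B_r(v))|\geq |\In(\F)|-|E(B_r(v))|\geq (1-\zeta)|E|-(\eta-\zeta)|E|=(1-\eta)|E|.
\]
Taking the contrapositive and applying Lemma~\ref{lem:slack},
\[
\pi_G^\ord\big(|\In(\F)\setminus E(B_r(v))|<(1-\eta)|E|\big)\leq \pi_G^\ord\big(|\In(\F)|<(1-\zeta)|E|\big)=e^{-\Omega(n)},
\]
which is the claimed bound.

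There is no real obstacle here; the entire argument is a two-line deterministic calculation plus an appeal to Lemma~\ref{lem:slack}. The only care needed is (i) choosing $q_0$ large enough and $G$ in the appropriate class so that Lemma~\ref{lem:slack} applies with its $\zeta<\eta$, and (ii) choosing $n_0$ large enough so that the polynomially-sized ball $B_r(v)$ has edge-count that fits inside the constant-fraction slack $(\eta-\zeta)|E|=\Theta(n)$; the bound $r\leq \tfrac{1}{3}\log_{\Delta-1}n$ is comfortably enough to make $|E(B_r(v))|=O(n^{1/3})$ negligible against $\Theta(n)$.
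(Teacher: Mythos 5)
Your proof is correct and is essentially identical to the paper's: both invoke the slack constant $\zeta<\eta$ from Equation~\eqref{eq:margin} of Lemma~\ref{lem:dominant} (Lemma~\ref{lem:slack}) and then observe deterministically that $|E(B_r(v))|=O(n^{1/3})$ is swallowed by the constant-fraction gap $(\eta-\zeta)|E|=\Theta(n)$ once $n\geq n_0$. Your version of the edge-count step is in fact slightly cleaner than the paper's (which compares $\Delta^{r(n)+1}$ to $\Delta^{r(n_0)+1}$ rather than directly to $(\eta-\zeta)|E|$), but the argument is the same.
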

\begin{proof}
Let $r_0(n) = \tfrac{1}{3} \log_{\Delta-1}(n)$ so $r(n) \leq r_0(n)$.
Let $n_0$ be sufficiently large that $\Delta^{r(n_0)+1} \leq (\eta - \zeta) n_0\Delta/2$, where $\zeta$ is the constant from Lemma~\ref{lem:dominant}. By Lemma~\ref{lem:dominant}, 
$$\pi_G^\ord\Big(|\In(\F)|\leq (1-\zeta)|E|\Big)=\emm^{-\Omega(n)}.$$
However, if $|\In(\F)| > (1-\zeta)|E|$,
then \[|\In(\F)\backslash E(B_r(v))| > (1-\zeta)|E| - \Delta^{r(n)+1} \geq
(1-\zeta)|E| - \Delta^{r(n_0)+1}\geq
(1-\eta)|E|.\qedhere\]
\end{proof}

\subsection{Proof of WSM within the ordered phase}

We can now prove Theorem~\ref{thm:RRwsm}.  We start with the following Lemma.
   
\begin{lemma}\label{cor:large-q-wsm-within-the-ordered-phase}\label{lem:better}
Let $\Delta\geq 5$ and  $K\geq 0$ be integers and let $\delta\in(0, 1/2)$ be a real.  There is $M=M(\Delta,K)>0$
such that the following holds for all sufficiently large $q$ and any $\beta \geq \beta_c$.
For all sufficiently large $n$ and any 
$n$-vertex graph $G\in\G_{\Delta,\delta,K}$, the RC model on~$G$ with parameters~$q $ and~$\beta $ has WSM within the ordered phase at radius $r_1$ satisfying $r_1\leq \frac{\M}{\beta}\log_{\Delta-1} (n)$. 
\end{lemma}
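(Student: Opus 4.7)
The plan is to put the coupling construction from Section~\ref{sec4} to work, reducing the WSM bound to a union bound over the three alternatives of Lemma~\ref{lemm:agree-on-edges-of-v-or-large-polymer} and then controlling each alternative by the tail estimates of Lemmas~\ref{cor:eee} and~\ref{cor:bbb}. Concretely: fix $v\in V$ and an edge $e$ incident to $v$, run the process to produce $(\F^\ord,\F^+)$ with $\F^\ord\sim\pi^\ord_G$ and $\F^+\sim\pi_{\B^+_{r_1}(v)}$ (valid by Observation~\ref{obs:dist}), and use the standard fact that the total-variation distance between the marginals on $e$ is at most $\Pr(\F^\ord(e)\neq\F^+(e))$.

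The radius is chosen as in Section~\ref{sec4}: set $M=M(\Delta,K)=2000\cdot 400\cdot (2+K)\Delta\log(\Delta-1)$ and $r=\min\!\left(\tfrac{M}{\beta},\tfrac{1}{3}\right)\log_{\Delta-1}(n)$. Take $q$ large enough so that $\beta_c\geq 3M$ (so that $r=\tfrac{M}{\beta}\log_{\Delta-1}(n)$ for every $\beta\geq\beta_c$) and so that $\beta_c\geq \log(q^{1.9/\Delta}+1)$ (so that Lemma~\ref{cor:bbb} applies); then take $n$ large enough so that $r>K+1$ and $|B_r(v)|\leq 9\Delta n/200$, which ensures the hypotheses of Lemma~\ref{lemm:agree-on-edges-of-v-or-large-polymer} hold. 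By Observation~\ref{obs24}, the $r_1$ produced by the process satisfies $1\leq r_1\leq r\leq\tfrac{M}{\beta}\log_{\Delta-1}(n)$, which is the radius claimed in the statement.

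By the coupling lemma, $\Pr(\F^\ord(e)\neq\F^+(e))$ is bounded above by $\Pr(\text{cond.~2})+\Pr(\text{cond.~3})$. Since $E(B_{r_1}(v))\subseteq E(B_r(v))$, condition~2 is a subevent of $\{|\In(\F^\ord)\setminus E(B_r(v))|<(1-\eta)|E|\}$, which by Lemma~\ref{cor:eee} has probability $e^{-\Omega(n)}$. The calibration of $M$ is engineered precisely so that $\tfrac{r}{400\Delta(1+K)}-1\geq \tfrac{2000\log n}{\beta}$ once $n$ is sufficiently large; condition~3 is therefore a subevent of ``$\F^\ord$ contains a polymer of size at least $2000(\log n)/\beta$'', which by Lemma~\ref{cor:bbb} has probability at most $2n^{-3/2}$. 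Summing gives $\Pr(\F^\ord(e)\neq\F^+(e))\leq e^{-\Omega(n)}+2n^{-3/2}$, which is at most $1/(100|E|)=\Theta(1/n)$ for sufficiently large $n$, as desired.

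The main obstacle has already been absorbed into earlier lemmas: Lemma~\ref{lemm:agree-on-edges-of-v-or-large-polymer} provides the structural dichotomy (either boundary connectivity forces $e$ to be coupled perfectly, or a macroscopic polymer must appear), and Lemma~\ref{cor:bbb} provides the crucial $1/\beta$-dependent polymer tail estimate that lets us trade radius $r$ against $\beta$. The remaining content of the proof of Lemma~\ref{lem:better} is therefore the book-keeping sketched above: checking that the choice of $M$ simultaneously delivers the radius bound $r\leq \tfrac{1}{3}\log_{\Delta-1}(n)$ required by Lemma~\ref{cor:eee} and the polymer-threshold inequality $\tfrac{r}{400\Delta(1+K)}-1\geq \tfrac{2000\log n}{\beta}$ required by Lemma~\ref{cor:bbb}, and that both failure probabilities are $o(1/m)$.
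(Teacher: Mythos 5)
Your proposal is correct and follows essentially the same route as the paper: the same choice of $M=2000\cdot 400\cdot(2+K)\Delta\log(\Delta-1)$, the same reduction of the TV distance on the marginal of $e$ to the coupling failure probability via Lemma~\ref{lemm:agree-on-edges-of-v-or-large-polymer}, and the same union bound over conditions~2 and~3 using Lemmas~\ref{cor:eee} and~\ref{cor:bbb}. The only cosmetic difference is that the paper makes the $\beta$-dependence of the ``sufficiently large $n$'' threshold explicit via the quantity $n_1(\beta)$ (needed so that $\tfrac{r}{400\Delta(1+K)}-1\geq\tfrac{r}{400\Delta(2+K)}$), which your phrase ``once $n$ is sufficiently large'' legitimately absorbs given the order of quantifiers in the statement.
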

\begin{proof}
Recall that $\eta = \min\{\delta/5,1/100\}$. Let $M = 2000 \times 400 \times (2+K) \Delta\log(\Delta-1)$.

Let $r(n,\beta) := \frac{\M}{\beta}\log_{\Delta-1}(n)$.
Let $\beta_0(q) := \log(q^{1.9/\Delta}+1)$.
Let $q_0$ and $n_0$ be large enough that 
\begin{itemize}
\item $\beta_c(q_0) \geq \beta_0(q_0) $.  
\item 
Lemmas~\ref{lemm:large-q-large-polymer-unlikely}, \ref{cor:large-q-many-edges-outside-a-ball} and~\ref{lemm:agree-on-edges-of-v-or-large-polymer} apply 
\item $\beta_0(q_0) \geq 3 M$.
\item $2 n_0^{-3/2} + f(n_0) \leq 1/(50 \Delta n_0)$ where $f(n)$ is the $e^{-\Omega(n)}$ upper bound from Lemma~\ref{cor:large-q-many-edges-outside-a-ball}.
  
\end{itemize}

For every~$\beta$ define $n_1(\beta)$ to be the smallest positive integer such that 
\begin{itemize}
\item    $\frac{r(n_1(\beta),\beta)}{400\Delta(1+K)}-1 \geq \frac{r(n_1(\beta),\beta)}{400\Delta(2+K)}  $
\end{itemize}

Now fix  $q\geq q_0$ and 
$\beta \geq  \beta_c(q) $.  

Consider any $n\geq \max\{n_0,n_1(\beta)\}$.
Consider an $n$-vertex graph $G\in\G_{\Delta,\delta,K}$. Let $r_1$ be as in Lemma~\ref{lemm:agree-on-edges-of-v-or-large-polymer}. 
Use the process to generate $\F^\ord$ and $\F^+$. 
We wish to show that for every edge $e$ incident to~$v$,
$$
\Vert{\pi_{\B_{r_1}^+(v)}(e \mapsto \cdot ) - \pi^{\ord}(e\mapsto \cdot) }\Vert_{\tv} \leq 1/(100|E|) = 1/(50\Delta n)  
$$

By  Observation \ref{obs:distributions are correct} 
$\F^\ord \sim\pi^\ord$ and $\F^+\sim\pi_{\B_{r+1}^+(v)}$. 
By Lemma \ref{lemm:agree-on-edges-of-v-or-large-polymer},
if $\F^\ord$ and $\F^+$ do not agree on an edge incident to $v$, then $\F^\ord$ contains a polymer of size at least $\frac{r(n,\beta)}{400\Delta(1+K)}-1$, or $|\In(\F^\ord)\backslash E(B_{r_1}(v))| < (1-\eta)|E|$. Thus the probability that $\F^\ord$ and $\F^+$ do not agree on an edge incident to~$v$ is at most the sum of probabilities of those two events.

Applying Lemma~\ref{lemm:large-q-large-polymer-unlikely}, noting that 
$\frac{r(n,\beta)}{400\Delta(1+K)}-1 \geq \frac{r(n,\beta)}{400\Delta(2+K)}$
and (given the lower bound on $M$ at the start of the proof) that  this quantity is at least
$2000\log n/\beta$, the probability of the first event is at most $2n^{-3/2}$.

Note that $r(n,\beta) \leq \tfrac13 \log_{\Delta-1}(n)$.
Applying Lemma~\ref{cor:large-q-many-edges-outside-a-ball}, the probability of the second event is 
$f(n) = e^{-\Omega(n)}$. The result follows by summing these probabilities. 
\end{proof}

Finally, we use Lemma~\ref{lem:better} to prove Theorem~\ref{thm:RRwsm}.

\rrwsm*
\begin{proof}
By Lemmas~\ref{lem:treelike} and~\ref{lem:class}, there exist $K=K(\Delta)>0$ and $\delta=\delta(\Delta) \in (0,1/2)$ 
such that, w.h.p.,  $G\sim\G_{\Delta,n}$ is in $\G_{\Delta,\delta,K}$.
Let $M = M(\Delta,K)$ be as in Lemma~\ref{cor:large-q-wsm-within-the-ordered-phase}, and note that $M$ is in fact only dependent on $\Delta$. By Lemma~\ref{cor:large-q-wsm-within-the-ordered-phase}, for all $q$ sufficiently large and $\beta \geq \beta_c$, for all $n$ sufficiently large and any $n$-vertex $G\in \G_{\Delta,\delta,K}$,   $G$ has a WSM within the ordered phase at radius~$r_1$ where
$r_1 \leq \tfrac\M\beta\log_{\Delta-1}(n)$.
\end{proof}

\section{Proof of WSM within the disordered phase} \label{sec:WSMdis}

We will prove that for $q$ large enough, all large enough graphs in $\G_{\Delta,\delta,K}$ have WSM within the disordered phase on all low enough temperatures. We use the same notion of partial configurations as in Section~\ref{sec:partnotation}.

\begin{definition}\label{def:refineDIS}
Let $\A$ be a partial configuration in~$\Omega^*$. If $\Omega_\A \cap \Omega^\dis$ is non-empty, then $\pi^\dis_\A$ is the conditional distribution of~$\pi^\dis$ in~$\Omega_\A\cap \Omega^\dis$.
    
For a set $F\subseteq E$, define the distribution $\pi^\dis_F$ to be the distribution on $\Omega_F$, such that for $\A\in\Omega_F$, $\pi_F^\dis(\A) = \pi^\dis(\Omega_\A)$.
\end{definition}

First we prove the following lemma showing that a suitable coupling exists. 

\begin{lemma}\label{lemm:disordered-coupling}
Fix $\Delta\geq 5$, $K\geq 0$ integers, and $\delta\in(0, 1/2)$ a real. Then, for all $q$ large enough and $\beta\leq \log(q^{2.1/\Delta}+1)$, the following holds for all $G=(V,E)\in\G_{\Delta,\delta,K}$ with sufficiently many vertices.
Let $v\in V$ and $r := \frac{1}{3}\log_{\Delta-1} |V|$. Then there is 
an integer~$r_1$ satisfying $r \geq r_1 \geq \frac{r}{K+1}-1$ and there is a coupling $(\F^-,\F^\dis)$ such that $\F^-\sim\pi_{\B^-_{r_1}(v)}$ and $\F^\dis\sim\pi^\dis$, and moreover at least one of the following holds:
\begin{enumerate}
\item $|\In(\F^\dis\backslash E(B_{r_1}(v))| > \eta|E| - |E(B_{r_1}(v))|$.
\item For any edge $e$ incident to $v$, $\F^-(e) = \F^\dis(e)$. 
\item $G[\In(\F^\dis)]$ has a component with at least $\frac{r}{K+1}-2$ edges.
\end{enumerate}
\end{lemma}
\begin{proof}
We use the definition of $T_0$ and $\Ec$ from Definition~\ref{def:excess-edges-and-tree}.
By Observation~\ref{obs24}, we get $r_1$ and $r_2$ satisfying  $r\geq r_1 > r_2 \geq 0$ such that $r_1-r_2\geq \frac{r}{K+1}-1$ and $E(B_{r_1}(v))\setminus E(B_{r_2}(v))$ contains no edges from $\Ec$. 
Let $T$ be the tree consisting of the first $r_1 + 1$ levels of $T_0$, i.e. $T_0[B_{r_1}(v)]$.
    
Construct the coupling as follows:
First, let $F_0:= E\setminus E(B_{r_1}(v))$ and $\F_0^\dis\sim\pi_{F_0}^\dis$.
If $|\In(\F_0^\dis)| > \eta|E| - |E(B_{r_1}(v))|$, then let $\F^\dis\sim\pi^\dis_{\F_0^\dis}$ and $\F^-\sim\pi^\dis_{\B_{r_1}^-(v)}$.
Otherwise, let $\F_0^-$ be the partial configuration with $R(\F_0^-) = \Out(\F_0^-) = F_0$, let $F_1 = E\setminus E(B_{r_2+1}(v))$ and generate, optimally coupled, $\F_1^\dis\sim\pi_{\F_0^\dis,F_1}$ and $\F_1^-\sim\pi_{\F_0^-,F_1}$. Finally, generate, optimally coupled $\F^-\sim\pi_{\F^-_1}$ and $\F^\dis\sim\pi_{\F^\dis_1}$.

First note, that $\F^\dis\sim\pi^\dis$ and $\F^-\sim\pi_{\B_{r_1}^-(v)}$. In the case $|\In(\F_0^\dis)| > \eta|E| - |E(B_{r_1}(v))|$ it follows by construction. Otherwise, we may note that any refinement of $\F^\dis_0$ has at most $\eta|E|$ edges, as $|E\setminus R(\F_0^\dis)| = |E(B_{r_1}(v))|$, thus $\pi_{\F_0^\dis,F_1} = \pi_{\F_0^\dis,F_1}^\dis$. Similarly, $\pi_{\F^\dis_1} = \pi_{\F_1^\dis}^\dis$. For $\F^-$ the result follows from the fact that a configuration $\F$ is a refinement of $\F_0^-$ if and only if $\pi_{\B^-_{r_1}(v)}(\F)>0$.
    
Now we proceed to show that the resulting configurations satisfy one of the conditions. We have three exhaustive cases, and we show that each corresponds to one of the conditions.
    
\noindent{\bf Case 1.} $|\In(\F^\dis_0)| > \eta|E| - |E(B_{r_1}(v))|$.
    
Then its refinement $\F^\dis$ satisfies $|\In(\F^\dis)\backslash E(B_{r_1}(v))| > \eta|E| - |E(B_{r_1}(v))|$, since $E(B_{r_1}(v))\cap R(\F^\dis_0) = \emptyset$, thus the first condition holds.

\noindent{\bf Case 2.} $|\In(\F^\dis_0)| \leq \eta|E| - |E(B_{r_1}(v))|$, and
$\xi(\F_1^\dis)$ is a free boundary. That is, no two vertices in $\partial(F_1)$ are in the same component of $G[\In(\F_1^\dis)]$. 

Since $|\In(\F^\dis_0)| \leq \eta|E| - |E(B_{r_1}(v))|$, we have $R(\F_0^\dis) = R(\F_0^-) = F_0$, and $R(\F_1^\dis) = R(\F_1^0) = F_1$. Also, $\emptyset = \In(\F_0^-)\subseteq \In(\F_0^\dis)$, thus by Corollary~\ref{cor:monotonicity}, and by the fact that $\F_1^\dis$ and $\F_1^-$ were optimally coupled, also $\In(\F_1^-)\subseteq\In(\F_1^\dis)$. 
So since  no two vertices in $\partial(R(\F_1^\dis)) = \partial(F_1) = \partial(R(\F_1^-))$ are in the same component of $G[\In(\F_0^\dis)]$, it is also true that no two vertices in $\partial(F_1)$ are in the same component of $G[\In(\F_0^-)]$.  So $\xi(\F^\dis_0) = \xi(\F^-_0)$, thus by Observation~\ref{obs:same-connectivity-boundary-implies-same-marginals} and by the fact that they were optimally coupled, $\F^-$ and $\F^\dis$ agree on the edges of $E\setminus F_1 = E(B_{r_2+1}(v))$. Note that since $r_2 + 1 \geq 1$, all edges incident to $v$ are in $E(B_{r_2+1}(v))$, thus $\F^\dis$ and $\F^-$ agree on them. This corresponds to condition two.

\noindent{\bf Case 3.} $|\In(\F^\dis_0)| \leq \eta|E| - |E(B_{r_1}(v))|$ and
there are distinct vertices $u_1, u_2\in\partial(F_1)$ such that $u_1$ and $u_2$ are in the same component of $G[\In(\F_1^\dis)]$.

Consider any path $\gamma$ between $u_1$ and $u_2$ in the $G[\In(\F_1^\dis)]$. Since no edges in $B_{r_2}(v)$ are revealed in $\F_1^\dis$, $\gamma$ is contained in $E\setminus E(B_{r_2}(v))$. Note that since $T_{u_1}$ and $T_{u_2}$ - the subtrees of $T$ rooted in $u_1$ and $u_2$ respectively, are disjoint, $V(\gamma)$ is not contained in $V(T_{u_1})$. Let $e = \{w_1, w_2\}$ be the first edge on $\gamma$ such that $w_1\in V(T_{u_1})$ and $w_2\not\in V(T_{u_1})$. Note that for any vertex $w$ with $r_1 > d_G(w, v) > r_2$, all edges incident to $w$ are edges of $T$, and in particular, if it also holds that if $w$ is a non-leaf vertex in $V(T_{u_1})$, all incident edges to $w$ are in $E(T_{u_1})$.

Then, by construction and the choice of $r_1$ and $ r_2$, $\partial(F_1) = \{u\in V \mid d_G(u, v) = r_2 + 1\}$. Also no edges incident to vertices with distance at most $r_2$ from $v$ were revealed, so it must be the case that $d_G(w_1, v) = r_1$.

Hence there is a path containing at least $r_1 - (r_2 + 1) \geq \frac{r}{K+1} - 2$ edges in $G[\In(\F_1^\dis)]$, and thus also in $G[\In(\F^\dis)]$. So there is a connected component of $G[\In(\F^\dis)]$ with size at least $\frac{r}{K + 1}-2$, which corresponds to the third condition.
\end{proof}

 Next we prove Theorem~\ref{thm:RRwsmB}.

\rrwsmb*
\begin{proof}

 By Lemmas~\ref{lem:treelike} and~\ref{lem:class}, there exist $K=K(\Delta)>0$ and $\delta=\delta(\Delta) \in (0,1/2)$ such that, w.h.p.,  $G\sim\G_{\Delta,n}$ is in $\G_{\Delta,\delta,K}$. Let $\beta_1(q) :=\log(q^{2.1/\Delta}+1)$.
 Let $\zeta$ be the constant from Lemma~\ref{lem:dominant} and let $C'$ be constant implicit in Equation~\ref{eq:margin} so that this equation guarantees
$\pi_G^\dis\Big(|\In(\F)|\geq \zeta|E|\Big) \leq e^{-C' n}$.

Let $q_0$ and $n_0$ be large enough that  

\begin{itemize}
\item $\beta_c(q_0) \leq \beta_1(q_0)$.

\item Lemmas \ref{lem:dominant} and \ref{lemm:disordered-coupling} apply.
Lemma \ref{lemm:polymer-weight-disordered} applies with $C = \frac{1}{3(2+K)\log(\Delta-1)}$.

\item $(\eta - \zeta) n_0 \geq \Delta n_0^{1/3}$ and $C'n_0 \geq \log(100\Delta n_0) $ and  $n_0\geq   (50\Delta)^2  $.
        
\item $\frac{\log_{\Delta-1} n_0}{3(K+1)}-2\geq \frac{\log_{\Delta-1} n_0}{3(K+2)}$,
 
\end{itemize}

Now fix  $q\geq q_0$ and 
$\beta \leq  \beta_c(q) $.  

Consider any $n\geq  n_0$.
Consider an $n$-vertex graph $G=(V,E)\in\G_{\Delta,\delta,K}$. 
Let $r:= \frac13 \log_{\Delta-1}(n)$. Fix $v\in V$.
Let $r_1$ be as in Lemma~\ref{lemm:disordered-coupling}. 
Use the process to generate $\F^\ord$ and $\F^+$. 
We wish to show that for every edge $e$ incident to~$v$,
$$
\Vert{\pi_{\B_{r_1}^-(v)}(e \mapsto \cdot ) - \pi^{\dis}(e\mapsto \cdot) }\Vert_{\tv} \leq 1/(100|E|)   
$$
  
By Lemma~\ref{lemm:disordered-coupling}, 
if $\F^\ord$ and $\F^+$ do not agree on all edges incident to $v$, then 
$|\In(\F^\dis)|\geq \eta|E| - |E(B_{r_1}(v))|$ or
$\F^\ord$ contains a polymer of size at least 
$\frac{r}{K+1}-2\geq \frac{\log n}{3(K+2)\log(\Delta-1)} = C \log n$.
Thus the probability that 
$\F^\ord$ and $\F^+$ do not agree on all edges incident to $v$ is at most the sum of the probabilities of these two events.
 
Since $(\eta - \zeta) n \geq \Delta n^{1/3}$,
$\zeta|E| \leq \eta|E| - |E(B_{r_1}(v))|$, so Lemma~\ref{lem:dominant} guarantees
that the probability of the first event is at most $e^{-C'n}\leq \frac{1}{100\Delta n} = \frac{1}{200|E|}$.

By Lemma~\ref{lemm:polymer-weight-disordered}, the probability of the second event is at most $n^{-3/2} / 2\leq \frac{1}{100\Delta n} = \frac{1}{200|E|}$.
The result follows by summing these probabilities. 
\end{proof}

\section{Proof of Theorems~\ref{thm:main1c} and~\ref{thm:main1d}}\label{sec:thmmain2}
In this section, we 
provide the last ingredients that were used
in the proof of Theorem~\ref{thm:main1} by showing the $O(n\log n)$ bounds in Theorems~\ref{thm:main1c} and~\ref{thm:main1d}. 
The overall argument is quite close to what was presented in Section~\ref{sec:f3434f34} for the proof of Theorem~\ref{thm:main1b};  the only extra argument required is a slightly more refined estimate from the distance to stationarity using log-Sobolev constants to save an $O(\log n)$ factor (a similar argument  appears in \cite{blanca2021random}). We present  first the relevant tools in Sections~\ref{sec:logab} and~\ref{sec:intq}, and then finish the proofs in Section~\ref{sec:comp}.

\subsection{The log-Sobolev constant and mixing time inequality}\label{sec:logab}
Let $\mu$ be a distribution supported on a set $\Omega$. For a function $g:\Omega\rightarrow \mathbb{R}_{\geq 0}$, let \[\Ent_{\mu}[g]:=\Eb_{\mu}[g \log g]-\Eb_{\mu}[g]\log \Eb_{\mu}[\log g],\] 
with the convention $0\log 0:=0$. Moreover, for a reversible Markov chain $(X_t)_{t\geq 0}$ on $\Omega$ with transition matrix $P\in \mathbb{R}^{\Omega\times \Omega}_{\geq 0}$, let $\Ec(g,g):=\tfrac{1}{2}\sum_{\omega,\omega'}\mu(\omega)P(\omega,\omega')(g(\omega)-g(\omega'))^2$ be the Dirichlet form for $P$. The standard log-Sobolev constant of the chain is then defined as 
\[\alpha(P):=\min_{\substack{g:\Omega\rightarrow \mathbb{R}_{\geq 0};\\ \Ent_{\mu}[g]\neq 0}}\frac{\Ec(\sqrt{g},\sqrt{g})}{\Ent_\mu[g]}.\]
The following well-known connection between the log-Sobolev constant and the distance from stationarity can be found, e.g., in~\cite[Fact 6.1]{blanca2021random}. Let $\mu_{\min}=\min_{\omega\in \Omega}. \mu(\omega)$. Then, for any $\gamma<\alpha(P)$, it holds that 
\begin{equation}\label{eq:logSob}
    \mbox{$\max_{X_0}$}\, \mathrm{dist}_{\mathrm{TV}}(X_t,\mu)\leq \emm^{-\gamma t/2}\big(\log\tfrac{1}{\mu_{\min}}\big)^{1/2}.
\end{equation}

\subsection{Log-Sobolev constant for free/wired tree-like neighbourhoods }\label{sec:intq} 
Here we briefly discuss how to bound the log-Sobolev for the wired RC dynamics for tree-like neighbourhoods for integer $q>1$ (cf.  Remark~\ref{rem:treeint}) using the results from \cite{treemixingRC} on the tree.
\begin{lemma}
Let $q\geq 2$ and $\Delta\geq 3$ be  integers, and  $K,\beta>0$  be reals. There exists  $\tilde{C}>0$ such that the following holds for every $\Delta$-regular graph $G=(V,E)$ and any integer $r\geq 1$.

Suppose that $\rho\in V$ is such that $G[B_r(\rho)]$ is $K$-treelike. Then, with $n=|B_r(\rho)|$,  the log-Sobolev constant for the  wired RC dynamics on $B_r(\rho)$ is $\geq \tilde{C}/n$.
\end{lemma}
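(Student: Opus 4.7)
The plan reduces the statement to the tree case, which is handled by~\cite{treemixingRC}, and then extends to general $K$-treelike neighbourhoods using that the excess is $O(1)$.

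First I would treat the base case where $G[B_r(\rho)]$ is a tree. After adjoining the auxiliary vertex $v_\infty$ wired to all vertices in $S_r(\rho)$, the wired RC dynamics becomes the single-edge RC Glauber chain on a finite tree with wired boundary. For integer $q\geq 2$, the results of~\cite{treemixingRC} give an $\Omega(1/n)$ lower bound on the log-Sobolev constant of this chain, uniformly in $\beta>0$ and in the boundary condition, with implicit constant depending only on $\Delta,q,\beta$. This covers the case $K=0$ and does the ``hard work'' of the proof.

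For the general $K$-treelike case, let $S\subseteq E(B_r(\rho))$ be a set of $k\leq K$ excess edges whose removal yields a spanning tree $T$ of $B_r(\rho)$. For each $\sigma:S\to\{0,1\}$, conditioning $\hat{\pi}_{B_r(\rho)}$ on $\F|_S=\sigma$ produces another wired RC measure on $T$, with a modified wired boundary in which the endpoints of the in-edges of $\sigma$ are contracted (possibly into the super-component containing $v_\infty$). Hence, by the base case, the restricted chain on $\{\F:\F|_S=\sigma\}$ has log-Sobolev constant $\geq C_1/n$ for some $C_1=C_1(\Delta,q,\beta,K)>0$, uniformly in $\sigma$. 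To recover the full log-Sobolev constant, I would combine the conditional bounds via a standard two-block decomposition of the Dirichlet form (Martinelli--Olivieri--Cesi, or a Diaconis--Saloff-Coste comparison against an auxiliary block chain that occasionally resamples all of $S$ jointly). Since $|S|\leq K$ and every $\sigma\in\{0,1\}^S$ has conditional probability bounded below by a constant depending only on $\Delta,q,\beta,K$, the projection chain onto $\{0,1\}^S$ has log-Sobolev $\Omega(1/n)$, and the decomposition then yields $\alpha\geq \tilde{C}/n$ with $\tilde{C}=\tilde{C}(\Delta,q,\beta,K)>0$.

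The main obstacle is the recombination step, because the single-edge RC dynamics does not decompose cleanly into ``$S$-moves'' and ``$T$-moves'' and the cut-edge rule for accepting an edge is global. The cleanest route is a Diaconis--Saloff-Coste canonical-paths comparison: each joint resampling of $S$ in the auxiliary block chain can be simulated by at most $2^{O(K)}$ single-edge Glauber moves of the original chain along a fixed canonical path, and each of those moves has transition probability at least $c/n$ for some $c=c(\Delta,q,\beta,K)>0$. Bounding the resulting congestion by a constant independent of $n$ transfers the $\Omega(1/n)$ log-Sobolev bound from the auxiliary chain to the original one. Verifying that the single-edge transition probabilities along the canonical path are truly bounded below by a constant multiple of $1/n$, independently of global connectivity, is the subtlest point, but it follows by a case analysis on whether the edge under consideration is a cut edge at the intermediate states, together with $\hat{p}\in(p/q,p)$ being bounded away from $0$ and $1$ in terms of $q,\beta$ alone.
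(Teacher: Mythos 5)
Your base case is exactly the paper's: for integer $q\geq 2$ the $\Omega(1/N)$ log-Sobolev bound for the wired RC dynamics on the tree is quoted from \cite{treemixingRC}. Where you diverge is the transfer to $K$-treelike balls. The paper simply invokes the graph-decomposition argument of \cite[Lemma 4.4]{SinclairsGheissari2022}, which in essence compares the measure and the Glauber Dirichlet form on $G[B_r(\rho)]$ to those of the tree (times an independent factor for the $\leq K$ excess edges): deleting an excess edge changes any configuration's weight by a factor in $[q^{-1}(\emm^\beta-1)^{-1}, q(\emm^\beta-1)]$, and all single-edge update probabilities lie in a fixed interval bounded away from $0$ and $1$, so a Holley--Stroock-type perturbation plus the tensorisation of log-Sobolev for product measures gives $\alpha\geq \tilde C/n$ directly, with no projection chain and no canonical paths. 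Your route --- condition on the excess edges, apply the tree bound to each conditional measure, recombine via a two-block decomposition and a Diaconis--Saloff-Coste comparison --- is workable but strictly more machinery for the same constant-factor loss.

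There is one step in your version that does not go through as written. Conditioning $\hat\pi_{B_r(\rho)}$ on $\F|_S=\sigma$ does \emph{not} produce a wired RC measure on $T$: each in-edge of $\sigma$ identifies two (generally internal) vertices of the tree, so the conditional measure is the RC model on $T$ with a \emph{general} boundary condition (a partition wiring $S_r(\rho)$ together with up to $K$ additional internal pairs). The bound you import from \cite{treemixingRC} is stated for the wired boundary, and RC measures on trees under arbitrary boundary conditions are a genuinely more delicate object, so ``uniformly in the boundary condition'' is not something you can assert for free. The gap is fixable --- each extra wiring changes the component count of any configuration by at most one, so the conditional measure is within a pointwise factor $q^{\pm K}$ of the wired tree measure and Holley--Stroock restores the bound --- but once you are invoking that perturbation you may as well apply it once, globally, to the whole treelike ball and skip the conditioning, the projection chain, and the canonical-path congestion estimate entirely.
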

\begin{proof}

Let $T=T_{\Delta}(\rho)$ denote the $\Delta$-regular tree rooted at $\rho$. Then, for integer $q\geq 2$ and $\beta>0$, it is shown in \cite[Proof of Lemma~32]{treemixingRC} that there exists $C>0$ such that the log-Sobolev constant for the wired RC dynamics on $T[B_r(\rho)]$ is $\geq C/N$ where $N=|B_r(\rho)|$.  

To  translate this into a lower bound on the log-Sobolev constant of $G[B_r(\rho)]$ (which is $K$-treelike) as in the statement of the lemma, we can just use the argument in \cite[Proof of Lemma~4.4]{SinclairsGheissari2022}. There, they show, for the Ising model with all plus boundary condition, it via a graph decomposition argument that there exists a constant $C'=C(q,\beta,K,\Delta)$ independent of $r$ such that  $\alpha\big(P_{G[B_r(\rho)]}\big)\geq C\alpha\big(P_{T[B_r(\rho)]}\big)$ whenever $G[B_r(\rho)]$ is $K$-treelike. The details of the graph decomposition do not depend on the Ising model, so the exact same strategy yields the analogue for the RC model. 
\end{proof}
The analogous result for the free boundary is available from \cite{blanca2021random} for all $q\geq 1$, we state here the following more precise version of Lemma~\ref{lem:mixtreelike}.
\begin{lemma}[{\cite[Lemma 6.5] {blanca2021random}}]\label{lem:bmixtreelike}
Let $\Delta\geq 3$ be an integer, and  $q,K>1$, $\beta>0$ be reals. There exists  $C>0$ such that the following holds for any $\Delta$-regular graph $G$ and integer $r\geq 1$.

Suppose that $\rho\in V$ is such that $G[B_r(\rho)]$ is $K$-treelike. Then, with $n=|B_r(\rho)|$,  the log-Sobolev constant for the  free RC dynamics on $B_r(\rho)$ is $\geq C/n$.
\end{lemma}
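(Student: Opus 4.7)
The statement is essentially a restatement of \cite[Lemma 6.5]{blanca2021random}, so the cleanest proof is to cite that reference directly. To sketch how one would establish it from scratch, the strategy mirrors exactly the proof of the analogous wired statement given just above in Section~\ref{sec:intq}. First, establish a log-Sobolev lower bound $\alpha \geq C'/N$ for the free RC dynamics on the ball $T_\Delta[B_r(\rho)]$ in the infinite $\Delta$-regular tree rooted at $\rho$, where $N = |B_r(\rho)|$ and $C' = C'(q,\beta,\Delta) > 0$. Second, transfer this bound from the tree to the $K$-treelike ball $G[B_r(\rho)]$ via a graph decomposition argument, incurring only a multiplicative constant depending on $q,\beta,K,\Delta$.

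For the tree step, the natural approach is a Martinelli–Olivieri / Lu–Yau style recursive block-dynamics argument. Decompose $T_\Delta[B_r(\rho)]$ into the $\Delta$ subtrees $T_\Delta[B_{r-1}(\rho_i)]$ hanging off the children $\rho_i$ of $\rho$. The crucial observation is that under free boundary, once the edges incident to $\rho$ are revealed, the conditional distributions on distinct subtrees are \emph{independent}, because deleting $\rho$ disconnects them. This subtree factorization is precisely what allows the argument to work for all real $q \geq 1$ (in contrast to the wired case, which in Section~\ref{sec:intq} required integer $q$ and appealed to \cite{treemixingRC}). Combining this factorization with an induction on $r$ that controls the mixing of the root's incident edges yields $\alpha \geq C'/N$; the RC-specific computations are carried out in \cite{blanca2021random}.

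For the second step, apply the same graph-decomposition / comparison argument used to prove the preceding lemma (borrowed from \cite[Proof of Lemma~4.4]{SinclairsGheissari2022}). Since $G[B_r(\rho)]$ is $K$-treelike, it differs from a spanning tree by at most $K$ excess edges. Conditioning on the status of these excess edges only reweights configurations by ratios bounded by $q^{O(K)}\emm^{O(\beta K)}$, so a standard block-dynamics / comparison of Dirichlet forms argument shows that $\alpha\bigl(P_{G[B_r(\rho)]}\bigr) \geq C''\,\alpha\bigl(P_{T[B_r(\rho)]}\bigr)$ for some $C''=C''(q,\beta,K,\Delta)>0$. Nothing in this decomposition depends on the boundary condition, so it transfers from the wired to the free case unchanged.

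The main obstacle is Step~1, the tree bound for real (non-integer) $q$. One cannot appeal to the Potts representation, and the lack of a product structure in the RC model makes the marginal computations at the root genuinely delicate; the induction has to carefully track the RC weights of the recursively coupled subtrees. This is the technical core of \cite{blanca2021random} and is where the bulk of the work lies. Step~2 is comparatively routine once Step~1 is in hand.
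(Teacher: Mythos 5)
Your proposal matches the paper exactly: the paper states this lemma as a direct citation of \cite[Lemma 6.5]{blanca2021random} and provides no proof of its own, which is precisely what you do. Your additional sketch of how the bound would be established from scratch (tree bound plus a treelike-comparison argument in the style of \cite[Lemma 4.4]{SinclairsGheissari2022}) is consistent with the strategy the paper describes for the wired analogue, but it is supplementary rather than required.
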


\subsection{Completing the proof of Theorems~\ref{thm:main1c} and~\ref{thm:main1d}}\label{sec:comp}

\begin{proof}[Proof of Theorems~\ref{thm:main1c} and ~\ref{thm:main1d}]
We first show Theorem~\ref{thm:main1c}. Consider therefore $\Delta\geq 5$, and let $q$ be sufficiently large so that both Lemma~\ref{lem:dominant} and Theorem~\ref{thm:RRwsmB} apply, and suppose that $\beta \leq \beta_c$.

Consider   $G=(V,E)\sim \G_{n,\Delta}$ with $n=|V|$ and $m=|E|$.  By Lemma~\ref{lem:treelike}, we can  assume that  $G$ is locally $K$-treelike. By Lemma~\ref{lem:dominant},
$\pi_G^\dis\big(|\In(\F)|\geq \zeta |E|\big)=\emm^{-\Omega(n)}$. By Theorem~\ref{thm:RRwsmB}, $G$ has  WSM within the disordered phase at  radius $r$ for some  $r\leq \tfrac{1}{3}\log_{\Delta-1}n$.  

 We will consider the RC dynamics $(X_t)_{t\geq 0}$ with $X_0$ being the all-out configuration on the edges. We will also consider 
the ``disordered'' RC dynamics $(\hat{X}_t)_{t\geq 0}$ with $\hat{X}_0\sim\pi^\dis_G$ where we reject moves of the chain that lead to configurations outside of $\Omega^{\dis}$.  Note that $\hat{X}_t\sim\pi^\dis_G$ for all $t\geq 0$. We will show how to couple these two dynamics in $O(n \log n)$ time. The result will follow by showing that there is a coupling between $(X_t)_{t\geq 0}$ and $(\hat{X}_t)_{t\geq 0}$ such that, for $T=O(n\log n)$, it holds that
\begin{equation}\label{eq:bb4r34mm}
    \Pr(X_T\neq \hat{X}_T)\leq 1/4.
\end{equation}
Analogously to the ordered case we use the monotone coupling, where at every step $t$, the two chains choose the same edge $e_t$ to update and use the same uniform number $U_t\in [0,1]$ to decide whether to include $e_t$ in each of $X_{t+1},\hat{X}_{t+1}$. For $t\geq 0$, let $\Ec_t$ be the event that $\In(\hat{X}_t)\leq \zeta|E|$ and  let $\Ec_{< t}:=\bigcap_{t'=0,\hdots,t-1} \Ec_{t'}$. From Lemma~\ref{lem:dominant} we have that  $\pi^{\dis}_G(\Ec_{< t})\geq 1-t\emm^{-\Omega(n)}$.  Using again the monotonicity of the model for $q\geq 1$, under the monotone coupling, for all $t\geq 0$ such that  $\Ec_{<t}$ holds (and hence no reject move has happened in $\hat{X}_t$ so far), we have that $X_t\leq \hat{X}_t$  (i.e., $\In(X_t)\subseteq \In(\hat{X}_t)$). We next proceed to bound the terms in the upper bound 
\begin{equation*}\tag{\ref{eq:rvffv3}}
\Pr\big(X_{t}\neq \hat{X}_t\big)\leq \sum_{e}\Pr\big(X_t(e)\neq \hat{X}_t(e)\big)\leq m\Pr\big(\overline{ \Ec_{<t}}\big)+\sum_{e}\Pr\big(X_t(e)\neq \hat{X}_t(e)\mid \Ec_{< t}\big).
\end{equation*}
So, fix an arbitrary edge $e$ incident to some vertex $v$, and let $(X^v_t)$ be the free RC dynamics on $G[B_r(v)]$. We couple the evolution of $(X_t^v)$ with that of $(X_t)$ and $(\hat{X}_t)$ using the monotone coupling analogously to the ordered case, where in $X_t^v$ we ignore updates of edges outside the ball $G[B_r(v)]$). We have  $X_t^v\leq X_t$ for all $t\geq 0$, and hence, conditioned on $\Ec_{<t}$, we have that $X^v_t\leq X_t\leq \hat{X}_t$. Proceeding as in the proof of Theorem~\ref{thm:main1b}, we therefore get the following analogue of  \eqref{eq:rvffv2}:
\begin{equation}\label{eq:brvffv2}
\begin{aligned}
\Pr\big(X_t(e)\neq \hat{X}_t(e)\mid \Ec_{< t}\big)\leq4\Pr\big(\overline{ \Ec_{<t}}\big)+\big|\Pr(X^v_t(e)&=0)-\pi_{\B^-_r(v)}(e\mapsto 0)\big|\\
&+\big|\pi_{\B^-_r(v)}(e\mapsto 0)-\pi^{\dis}_G(e\mapsto 0)\big|.
\end{aligned}
\end{equation}
Since $G$ has WSM within the disordered phase at radius $r$, we have that
\begin{equation}\label{eq:brvffv1}
\big|\pi_{\B^-_r(v)}(e\mapsto 0)-\pi^{\dis}_G(e\mapsto 0)\big|\leq 1/(100 m).
\end{equation}
Let $C>0$ be the constant in Lemma~\ref{lem:bmixtreelike} and set  $N_v=|E(B_r(v))|\leq \Delta^{r+1}$. By Chernoff bounds, for $T=\Theta(n\log n)$, we get at least $t_v:=\tfrac{40}{C}N_v\log n$ edge updates withing the ball $B_r(v)$ with probability $1-\exp(-n^{\Omega(1)})$. Since $r\leq \tfrac{1}{3}\log_{\Delta-1} n$, $G[B_r(v)]$ is $K$-treelike, so from Lemma~\ref{lem:bmixtreelike} and applying the log-Sobolev inequality \eqref{eq:logSob} (with $\gamma=C/(2N_v)$, $t=t_v$ and $\min_{\F}\pi_{\B^-_r(v)}(\F)\geq (2q \emm^{\beta})^{-N_v}$), we have
\begin{equation}\label{eq:brvffv}
\big|\Pr(X^v_T(e)=0)-\pi_{\B^-_r(v)}(e\mapsto 0)\big|\leq \exp(-n^{\Omega(1)})+ \emm^{-C t_v/(4N_v) }\big(\log\tfrac{1}{(2q \emm^{\beta})^{-N_v}} \big)^{1/2}\leq 1/m^3,
\end{equation}
where the last inequality holds for all $m=\Delta n=\Omega(1)$.
Plugging \eqref{eq:brvffv1} and \eqref{eq:brvffv} into \eqref{eq:brvffv2} for $t=T$, and then back into \eqref{eq:rvffv3}, we get \eqref{eq:bb4r34mm}, i.e., $\Pr(X_T\neq \hat{X}_T)\leq 5m T \emm^{-\Omega(n)}+m/m^3+1/100\leq 1/5$. This finishes the proof of Theorem~\ref{thm:main1b}.

For the proof of Theorem~\ref{thm:main1d} (integer $q$ and $\beta\geq \beta_c$), the argument is completely analogous to what was just presented for $\beta\leq \beta_c$, using now the log-Sobolev bound of Lemma~\ref{lem:mixtreelikeb} to get the analogue of \eqref{eq:brvffv} (see the proof in Section~\ref{sec:f3434f34} and the inequality \eqref{eq:rvffv}).  
\end{proof}

\bibliographystyle{plainurl}
\bibliography{fn.bib}

\appendix

\section{Mixing on tree-like graphs for the wired RC dynamics}\label{app:mixtreelike}
In this section, we prove Lemma~\ref{lem:mixtreelike} which we restate here for convenience.
\begin{lemmixtreelike}
\statelemmixtreelike
\end{lemmixtreelike}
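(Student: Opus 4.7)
The plan is to bound the mixing time of the wired RC dynamics on $B_r(\rho)$ via the canonical-paths method, in parallel with the analysis underlying Lemma~\ref{lem:mixtreelikeb} for the free chain. Let $m := |E(G[B_r(\rho)])|$, noting $m = O(n)$. Fix an ordering $e_1,\ldots,e_m$ of these edges in BFS order from $\rho$ (with outer layers first). For each pair of configurations $(\F_1,\F_2)$ in the state space, define the canonical path from $\F_1$ to $\F_2$ by processing $e_1,\ldots,e_m$ in order and flipping only those edges on which $\F_1$ and $\F_2$ disagree. Each path has length at most $m$ and each transition along the path has probability at least $\min(\hat p, 1-p)/m$, which is $\Omega(1/(m q \emm^\beta))$.

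To bound the congestion $\rho = \max_{(X,Y)} Q(X,Y)^{-1}\sum_{\gamma \ni (X,Y)} \hat\pi_{B_r(\rho)}(\F_1)\hat\pi_{B_r(\rho)}(\F_2)|\gamma|$, I would use the standard Jerrum--Sinclair bijective encoding: given a transition $(X,Y)$ along the canonical path at step $k$, define $\eta$ by $\eta(e_i) = \F_1(e_i)$ for $i \leq k$ and $\eta(e_i) = \F_2(e_i)$ for $i > k$, so that $(X, Y, \eta)$ uniquely recovers $(\F_1, \F_2)$. Since $|\F_1| + |\F_2| = |X| + |\eta|$, this reduces the congestion to bounding the weight distortion $R := \max q^{\hat c(\F_1) + \hat c(\F_2) - \hat c(X) - \hat c(\eta)}$, giving $\rho \leq R \cdot m^2 \cdot q \emm^\beta$. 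Combining with $T_\mathrm{mix} \leq \rho \log(1/\hat\pi_{\min})$, where $\log(1/\hat\pi_{\min}) = O(n(\beta + \log q))$ for the wired stationary distribution, yields $T_\mathrm{mix} = O(n^3 \cdot R \cdot q \emm^\beta)$.

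The main obstacle is bounding the weight distortion $R$. A naive estimate based on the fact that each single-edge flip changes $\hat c$ by at most $1$ gives only $R \leq q^{O(m)}$, which is exponentially too weak. To achieve the target factor $(q^4 \emm^\beta)^{\Delta r}$ the sharper estimate must exploit the $K$-treelike structure of $G[B_r(\rho)]$: in a tree-like graph, flipping an edge $e$ at BFS-distance $\ell$ from $\rho$ affects the component structure (outside of $O(K)$ excess edges) only within the subtree rooted at the deeper endpoint of $e$. With the BFS-compatible canonical paths and encoding, the contributions to $\hat c(\F_1) + \hat c(\F_2) - \hat c(X) - \hat c(\eta)$ from each layer can be bounded by $O(\Delta)$ (accounting for local branching), so the overall cumulative bound is $R \leq q^{O(\Delta r)}$, within the target. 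The technical subtlety is tracking the wired boundary carefully: the virtual vertex $v_\infty$ can cause otherwise-disjoint subtree components to merge, so the component-counting argument must be adapted to respect this wiring.
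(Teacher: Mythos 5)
Your high-level framework matches the paper's proof of Lemma~\ref{lem:mixtreelike}: canonical paths processed edge-by-edge in a fixed order, the Jerrum--Sinclair complement encoding, exact cancellation of the $(\emm^\beta-1)^{|\In(\cdot)|}$ factors, and reduction to a bound on the component-count distortion $\hat c(\F_1)+\hat c(\F_2)-\hat c(X)-\hat c(\eta)$. You also correctly identify that bounding this distortion is the entire difficulty. However, the ordering you choose --- BFS order, outer layers first --- does not work, and the claimed ``contribution $O(\Delta)$ per layer'' is false. The distortion is controlled by the \emph{cut} between processed and unprocessed edges at each intermediate step, and under a layer-by-layer ordering this cut is an entire sphere $S_{d_0}(\rho)$ of size $\Theta((\Delta-1)^{d_0})$. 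Concretely: let $v$ be a vertex at depth $d_0-1$ (unprocessed side) with children $a,b$ at depth $d_0$. Take $\F_1$ with $va$ in and an all-in path from $a$ down to a leaf, $vb$ and $b$'s subtree out; take $\F_2$ with $va$ in but $a$'s subtree out, $vb$ in with an all-in path from $b$ to a leaf. Then $v$ reaches the wired boundary in $\F_1$, in $\F_2$, and in $\eta$ (shallow edges from $\F_2$, deep from $\F_1$), but \emph{not} in $X$ (shallow from $\F_1$, deep from $\F_2$), where its component $\{v,a\}$ is cut off. Each such $v$ contributes $+1$ to the distortion, this gadget can be planted at $\Theta((\Delta-1)^{d_0})$ vertices of a single layer independently, and hence $R$ can be as large as $q^{n^{\Omega(1)}}$, destroying the polynomial bound.

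This is precisely why the paper does \emph{not} use a BFS edge order. It orders vertices by a \emph{depth-first} traversal of the BFS tree and orders edges lexicographically in that vertex order. DFS guarantees two properties that your argument needs and BFS lacks: (a) at every prefix of the edge order, only the $\leq \Delta r$ tree edges incident to ancestors of the current vertex (plus the $\leq K$ excess edges) cross between the processed and unprocessed regions, so the cut has size $O(\Delta r)$ rather than $\Theta((\Delta-1)^{d_0})$; and (b) every subtree $T_w$ with $w$ not an ancestor of the current vertex lies entirely on one side of the cut, so on each whole subtree the pair $(X,\eta)$ coincides with either $(\F_2,\F_1)$ or $(\F_1,\F_2)$, which is what makes the indicator identities \eqref{eq:indicatorsb}--\eqref{eq:indicatorsc} (and hence the exact cancellation of component counts, up to the $O(\Delta r + K)$ tweaked edges) go through. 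Your closing remark about $v_\infty$ merging subtree components is a real issue, but flagging it is not resolving it; the paper handles it by contracting the leaves to a single vertex $v^*$ and by first ``tweaking'' the four configurations to be all-in on a root-to-leaf path through the current vertex and on the ancestor-incident edges, paying only $q^{O(\Delta r + K)}$ for the tweak. Without the DFS ordering and this tweaking step, the proposal does not yield the stated bound.
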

\begin{proof}
We use a canonical paths argument, following largely \cite[Proof of Lemma 8]{MosselSly}. Let  $G=(V,E)$ be a $\Delta$-regular graph and  $\rho\in V$ be such that $G[B_r(\rho)]$ is $K$-treelike. Let $n=|B_r(\rho)|$, $m=|E(B_r(\rho)|$. Let $\hat{\pi}=\hat{\pi}_{B_r(\rho)}$ be the stationary distribution of the wired RC dynamics on $B_r(\rho)$.

Consider a BFS tree $T$ for $G[B_r(\rho)]$  starting from $\rho$, let $L$ be the set of the leaves of $T$ and let $\Ec=E\backslash E(T)$ be the set of excess edges. Note that the height of $T$ is at most $r$,  $S_r(\rho)\subseteq L$ and there are at most $2K$ leaves of $T$ at depth less than $r$ (by the $\Delta$-regularity of $G$, each such leaf must be incident to an excess edge of $G$), so $|L\backslash S_r(\rho)|\leq 2K$. For a vertex $u$, we let $T_u$ be the subtree of $T$ rooted at $u$, and $\mathrm{Anc}(u)$ be the ancestors of $u$ in $T$ (i.e., the vertices on the path from the root $\rho$ to $u$, including $u$). 

 Consider next a depth-first-search traversal of the tree $T$ starting from $\rho$, and let  $v_1,\hdots, v_n$ be the order in which the vertices of $T$ were first visited. We write $v_i<v_j$ whenever $i<j$; moreover, for a subset of vertices $S$ and a vertex $u$, we write $S<u$ to denote that for each $w\in S$ it holds that $w<u$. Since the tree $T$ has depth $r$, at any stage of the DFS traversal there are at most $r$ vertices which are visited but not fully explored (note that a leaf that is visited is automatically explored). Since the degree of any vertex is $\leq \Delta$, it follows that
\begin{enumerate}[(a)]
\item \label{it:alpha} for $i=1,\hdots,n$, there are at most $\Delta r$ tree edges  with one endpoint in $\{v_1,\hdots,v_i\}$ and the other in $\{v_{i+1},\hdots,v_m\}$ (from the DFS traversal). 
\item \label{it:beta1} for an arbitrary vertex $u$ in $T$ and any vertex $w$ which is not an ancestor of $u$ (i.e., $w\notin \mathrm{Anc}(u)$),  we have from the DFS traversal either that $V(T_w)<u$ or $V(T_w) > u$.
\end{enumerate}
We order the edges $e_1,\ldots,e_m$ 
in $G[\B_r(\rho)]$ (including excess edges)
in lexicographic order in terms of the order of the vertices. So edges with an endpoint~$v_1$ come first (ordered by the other endpoint) and so on.

For two configurations $\F,\F':E\rightarrow \{0,1\}$ that differ on the edges $e_{i_1},\hdots, e_{i_k}$ (with $i_1<\cdots<i_k$), define the path $\mathrm{Path}(\F,\F')$ of configurations $\mathrm{Path}(\F,\F')=\F_0\rightarrow \F_1\cdots\rightarrow \F_{k}$ by defining, for $j=0,\hdots,k$, $\F_{j}$ to agree with $\F$ on the edges $\{e_{i_{j+1}},\hdots, e_{i_k}\}$ and to agree with $\F'$ on the edges $\{e_{i_1},\hdots, e_{i_j}\}$. Note that $\F_0=\F,\F_k=\F'$ and both $\F,\F'$ and hence $\F_j$ as well agree on $E\backslash \{e_{i_1},\hdots, e_{i_k}\})$. Then, the relaxation time of the chain, see for example \cite[Chapter 5]{Jerrumbook}, is bounded by 
\[\tau\leq m\max_{(\F,\F')}\sum_{\substack{\A,\A':E\rightarrow \{0,1\};\\ (\F\rightarrow \F')\in \mathrm{Path}(\A,\A')}}\frac{\hat\pi(\A)\hat\pi(\A')}{\hat\pi(\F)\Pr(\F,\F')},\]
where  the maximum is over all pairs of configurations $\F,\F'$ that differ on a single edge and the summation is over all pairs of configurations $\A,\A'$ whose  corresponding $\mathrm{Path}(\A,\A')$ includes the transition $\F\rightarrow \F'$ as part of the path. 

Consider an arbitrary  pair of configurations $\kappa=(\F,\F')$ that differ at a single edge $f=\{v_j, v_{j'}\}$ for some $j<j'$ and suppose further that $f=e_\ell$.
For configurations  $\A,\A'$ such that $(\F\rightarrow \F')\in \mathrm{Path}(\A,\A')$ define the configuration $g_{\kappa}(\A,\A')$ so that
\begin{equation}\label{eq:agree332r}
\mbox{$g_{\kappa}(\A,\A')$ agrees with $\A$ on $\{e_1,\hdots,e_{\ell-1}\}$ and with $\A'$ on  $\{e_\ell,\hdots,e_{m}\}$.}
\end{equation}From the fact that $(\F\rightarrow \F')\in \mathrm{Path}(\A,\A')$, we also have that:
\begin{equation}\label{eq:agree21341}
\mbox{$\A$ agrees with $\F$ on $\{e_{\ell},\hdots,e_{m}\}$},\ \ \mbox{$\A'$ agrees with $\F'$ on $\{e_{1},\hdots,e_{\ell-1}\}$.}
\end{equation}
It follows that the map $g_{\kappa}(\cdot,\cdot)$ is injective, i.e., given its value $\F^*$ and the configurations $\F,\F'$, there is a unique pair $(\A,\A')$ such that $\F^*=g_{\kappa}(\A,\A')$ and $(\F\rightarrow \F')\in \mathrm{Path}(\A,\A')$. 
For a configuration $\Xc: E(B_r(\rho))\rightarrow \{0,1\}$, let $\hat c(\Xc)$ denote the number of components in $(B_r(\rho),\In(X))$ that do not include any of the vertices in $S_r(\rho)$, cf. Footnote~\ref{fn:chat}. 
Let $s:= \Delta r + K$ and $t := 4 \Delta r + 12 K$.
The main step in the proof is to show that:
\begin{gather}
    |\In(\A)|+|\In(\A')|-|\In(\F)|-|\In(g_\kappa(\A,\A'))|\leq   s\label{eq:edges123},\\
    |\hat c(\A)+\hat c(\A')-\hat c(\F)-\hat c(g_{\kappa}(\A,\A'))|\leq  t.\label{eq:comp123}
\end{gather}
Assuming these for the moment, we have that 
\[
\frac{\hat\pi(\A)\hat\pi(\A')}{\hat\pi(\F)\hat\pi\big(g_{\kappa}(\A,\A')\big)}\leq q^{t}\emm^{\beta s}.\]

Recall that $p=1-\emm^{-\beta}$
and that for $q>1$ it holds that $\hat p\in(p/q,p)$. Therefore,
for the RC dynamics, we have that $\Pr(\F,\F')\geq 
\frac{\min\{p/q,(1-p)\}}{m} \geq (1-\emm^{-\beta})/(q m \emm^\beta)$ using that $\min\{p/q,(1-p)\}\geq \emm^{-\beta}/q$. 
Using this and the injectivity of $g$, we can bound $\tau$ by
\[\tau\leq \frac{m^2 q^{t+1}\emm^{\beta (s+1)}}
{(1-\emm^{-\beta})}
\sum_{\substack{\A,\A':E\rightarrow \{0,1\};\\ (\F\rightarrow \F')\in \mathrm{Path}(\A,\A')}} \hat\pi\big(g_{\kappa}(\A,\A')\big)\leq 
\frac{m^2 q^{t+1}\emm^{\beta (s+1)}}
{(1-\emm^{-\beta})}.\]
The mixing time is at most
$ \tau (1 + \tfrac12 \log(\min_{\Xc} \hat\pi(\Xc)^{-1}))$
Since 
$\min_{\Xc}\hat\pi(\Xc)\geq 
2^{-n} q^{-n} e^{-\beta m}$
and $m\leq \Delta n$, 
the mixing time is at most
 $n^3 \Delta^2 q^{t+1}\emm^{\beta (s+1)}\tfrac{1+ \log(q) + \beta \Delta}{1-\emm^{-\beta}}$
so
the statement for the mixing time follows by taking 
$ \hat{C}:= 2 \Delta^3 (q \emm^{\beta})^{12K+3}/{(1-\emm^{-\beta})}$.

It remains to show \eqref{eq:edges123} and \eqref{eq:comp123}. For convenience let $\F^*=g_{\kappa}(\A,\A')$. 
We will shortly prove the following facts for an arbitrary edge $e=\{u,w\}$:
\begin{enumerate}
\item\label{p:one} if $\A(e)=\A'(e)$, then $\A(e)=\A'(e)=\F(e)=\F^*(e)$.
\item \label{p:two} if $\A(e)\neq \A'(e)$, then $\big(\A(e), \A'(e)\big)= \big(\F^*(e), \F(e)\big)$ when 
$u,w\leq v_j$ 
and $\big(\A(e), \A'(e)\big)= \big(\F(e), \F^*(e)\big)$ when 
$u,w>v_j$. 
\end{enumerate}
Indeed, for Item~\ref{p:one}, consider 
$e$ with $\A(e) = \A'(e)$
and suppose first that  $ \A(e)=\A'(e)=1$. Then $\F(e)=\F'(e)=1$ (since by the construction of $\mathrm{Path}(\A,\A')$, all configurations in it agree on the edges where $\A,\A'$ agree). Moreover,  $\F^*(e)=1$ by construction (since $\F^*=g_{\kappa}(\A,\A')$ agrees with $\A,\A'$ on the edges where the latter agree). The proof for the second case  $ \A(e)=\A'(e)=0$ is analogous.  Item~\ref{p:two} follows from \eqref{eq:agree332r} and \eqref{eq:agree21341} after observing that $u,w\leq v_j$ implies that 
$e=\{u,w\}<e_\ell$, while $u,w> v_j$  gives that $e>e_\ell$.

Let $W_j$ be the tree edges which are incident to a vertex in $\mathrm{Anc}(v_j)$, so that $|W_j|\leq \Delta r$. 
 From Item~\eqref{it:alpha}, these are the only tree edges that can have one endpoint in $\{v_1,\hdots, v_j\}$ and the other among $\{v_{j+1},\hdots, v_n\}$, along with any non-tree edges (i.e., excess edges $\Ec$). Therefore, from Items~\ref{p:one} and~\ref{p:two} above, it follows that for an edge $e\in E\backslash (W_j \cup \Ec)$ it holds that
\begin{equation}\label{eq:44412}
\A(e)+\A'(e)=\F(e)+\F^*(e),
\end{equation}
which establishes \eqref{eq:edges123} since $|W_j \cup \Ec|\leq \Delta r+K$.

To show \eqref{eq:comp123}, fix an arbitrary root-to-leaf path $P$ passing through $v_j$, and denote by $E_P$ be the edges of the path.  Now, consider the slightly ``tweaked'' configurations $\hat{\A},\hat{\A}',\hat{\F},\hat{\F}^*$ obtained from $\A,\A',\F,\F^*$ respectively by setting, for each $\Xc\in \{\A,\A',\F,\F^*\}$,  $\hat{\Xc}(e)=1$  for $e\in E_P\cup W_j$, $\hat{\Xc}(e)=0$  for $e\in \Ec$ and $\hat{\Xc}(e)=\Xc(e)$ for $e\notin E_P\cup W_j\cup \Ec$. Note that, for each $\Xc\in \{\A,\A',\F,\F^*\}$,  we have $\big||\hat{\Xc}|-|\Xc|\big|\leq \Delta r+K$, so $\big||\hat c(\hat\Xc)|-|\hat c(\Xc)|\big|\leq \Delta r+K$ as well. So, to prove \eqref{eq:comp123}, it suffices to show
\begin{equation}\label{eq:fgt5}
|\hat c(\hat\A)+\hat c(\hat\A')-\hat c(\hat\F)-\hat c(\hat\F^*)|\leq 8K.
\end{equation}
To show this, first note that $\hat{\A},\hat{\A}',\hat{\F},\hat{\F}^*$ still satisfy Items~\ref{p:one} and \ref{p:two} above (replacing $\A$ with $\hat\A$, and so on) since the only changes are for edges in  $E_P\cup W_j\cup \Ec$ on which the assignments of the tweaked configurations are identical (and hence fall under Item~\ref{p:one}). In fact, Items~\ref{p:one} and~\ref{p:two} now apply to all edges for the tweaked configurations (previously, edges in $W_j\cup \Ec$ were potentially not covered in Item~\ref{p:two}, but now are covered since they fall under Item~\ref{p:one}). So, the analogue of \eqref{eq:44412}  for $\hat{\A},\hat{\A}',\hat{\F},\hat{\F}^*$ holds for every edge of $G[B_r(\rho)]$, yielding that
\begin{equation}\label{eq:43t34}
|\In(\hat\A)|+|\In(\hat \A')|=|\In(\hat\F)|+|\In(\hat \F^*)|,
\end{equation}
and, further, for any edge $e=\{u,w\}$ we have that 
\begin{equation}\label{eq:hatA}\big(\hat\A(e),\hat\A'(e)\big)=\begin{cases} \big(\hat\F^*(e),\hat\F(e)\big) & \mbox{ if } u,w\leq v_j\\ \big(\hat\F(e),\hat\F^*(e)\big),& \mbox{ otherwise. } \end{cases}
\end{equation}

At this stage, it will be convenient to consider the graph $T^*$ obtained from $G[B_r(\rho)]$ by identifying all the leaves of $T$ into a single vertex $v^*$ (recall that the leaves of $T$ consist of the vertices $S_r(\rho)$, which are at depth $r$, together with at most $2K$ leaves that are at depth $<r$). For $\Xc\in \{\hat\A,\hat\A',\hat\F,\hat\F^*\}$, let $c^*(\Xc)$ be the number of components in the graph $T^*[\In(\Xc)]$ that do not include $v^*$ and let $\C^*(\Xc)=(V^*(\Xc),E^*(\Xc))$ be the connected component of $v^*$. Note that any connected component of $(B_r(\rho),\In(\Xc))$ that does not include a vertex from $S_r(\rho)$ is not connected to $v^*$ in $T^*[\In(\Xc)]$, unless it contains one of the $2K$ leaves at depth $<r$. It follows that  $|c^*(\Xc)-\hat{c}(\Xc)|\leq 2K$, so to prove \eqref{eq:fgt5} it suffices to show
\begin{equation}\label{eq:fgt5b}
c^*(\hat\A)+c^*(\hat\A')= c^*(\hat\F)+ c^*(\hat\F^*).
\end{equation}
 For $\Xc\in \{\hat\A,\hat\A',\hat\F,\hat\F^*\}$, note that  a  connected component $F$ of $\Xc$ that does not include $v^*$ is a subgraph of $T$  (using that $\Xc(e)=0$ for $e\in \Ec$) and hence a tree, so $1=|V(F)|-|E(F)|$. Summing over all such components, we obtain that
 \[c^*(\Xc)=|V(T^*)|-|V^*(\Xc)|-|\In(\Xc)|+|E^*(\Xc)|.\]
Therefore, using \eqref{eq:43t34}, to finish the proof of \eqref{eq:fgt5b} it suffices to  show that for an arbitrary vertex $v$ and an arbitrary edge $e$ it holds that 
\begin{gather}
\mathbf{1}\{v\in V^*(\hat\A)\}+\mathbf{1}\{v\in V^*(\hat\A')\}=\mathbf{1}\{v\in V^*(\hat\F)\}+\mathbf{1}\{v\in V^*(\hat\F^*)\}.\label{eq:indicatorsb}\\
\mathbf{1}\{e\in E^*(\hat\A)\}+\mathbf{1}\{e\in E^*(\hat\A')\}=\mathbf{1}\{e\in E^*(\hat\F)\}+\mathbf{1}\{e\in E^*(\hat\F^*)\}.\label{eq:indicatorsc}
\end{gather}
Clearly \eqref{eq:indicatorsb} is true for any 
$v\in \mathrm{Anc}(v_j)$ 
since $v\in \C(\Xc)$ for each $\Xc\in \{\hat\A,\hat\A',\hat\F,\hat\F^*\}$ by construction of the tweaked configurations. So, consider 
$v\notin \mathrm{Anc}(v_j)$.
First, suppose $v<v_j$.
Let $u$ be the ancestor of $v$ whose parent $w$ is an ancestor of $v_j$.   The key point is that for each $\Xc\in \{\hat\A,\hat\A',\hat\F,\hat\F^*\}$ we have  $w\in V^*(\Xc)$ (since $w$ is an ancestor of $v_j$) and the edge $e=\{u,w\}$ belongs to $W_j$ , so $\hat\Xc(e)=1$. Therefore whether  
$v \in V^*(\Xc)$
is determined by the set of those paths contained in the subtree $T_u$ which  start from $v$ and end either at  $u$  or a leaf of $T_u$. Denote by $\Pc_{v}$ the set of all such paths. 
Since 
$v<v_j$, we  have that $u<v_j$  and that $u$ is not ancestor of $v_j$, so from Item~\ref{it:beta1} it holds that $V(T_u)<v_j$. It follows from~\eqref{eq:hatA} that for every edge $e\in E(T_u)$ we have $(\hat\A(e),\hat\A'(e))=(\F^*(e),\hat\F^*(e))$, so
\[v\in V^*(\hat\A) \Leftrightarrow\, \mbox{$\exists P'\in \Pc_{v}:\ E(P')\subseteq \In(\hat\A)$} \Leftrightarrow\,\mbox{$\exists P'\in \Pc_{v}:\ E(P')\subseteq \In(\hat\F^*)$} \Leftrightarrow  v\in V^*(\hat\F^*),\]
and similarly $ v\in V^*(\hat\A')\Leftrightarrow v\in V^*(\hat\F)$, proving \eqref{eq:indicatorsb} for $v<v_j$. An analogous argument applies when $v>v_j$, then $V(T_u)>v_j$ and, using~\eqref{eq:hatA} again, $v\in V^*(\hat\A)\Leftrightarrow v\in V^*(\hat\F)$, $ v\in V^*(\hat\A')\Leftrightarrow v\in V^*(\hat\F^*)$, finishing the proof of \eqref{eq:indicatorsb}. The proof of \eqref{eq:indicatorsc} is very similar, after noting that for $\Xc\in \{\hat\A,\hat\A',\hat\F,\hat\F^*\}$ and an edge $e=\{u,w\}$ it holds that $e\in E^*(\Xc)$ iff $ \Xc(e)=1$ and $u,w\in V^*(\Xc)$.

This completes the proof of \eqref{eq:fgt5b}, and hence the proof of \eqref{eq:fgt5} and \eqref{eq:comp123} as well, concluding therefore the lemma.
\end{proof}

\section{Boundary conditions and monotonicity}
\label{app:boundary} 

We will be interested in vertices on the boundary of a partial configuration, and in the structure of connected components in the graph induced by in-edges. See Definition~\ref{def:boundary}.

\begin{observation}\label{obs:same-connectivity-boundary-implies-same-marginals}\label{obs:shortname}
Let  $\F_1$ and $\F_2$ be two partial configurations such that $R(\F_1) = R(\F_2)$ and $\xi(\F_1)=\xi(\F_2)$. Let $F = E\backslash R(\F_1)$. Then $\pi_{\F_1}$ and $\pi_{\F_2}$ have the same projection on $F$, meaning that for any partial configuration  $\A\in \Omega_F$, $\pi_{\F_1}(\F_1 \cup \A) = \pi_{\F_2}(\F_2 \cup\A)$.
\end{observation}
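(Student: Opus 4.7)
\medskip

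\noindent\textbf{Proof plan.} The plan is to show that, after cancellation in the ratio defining $\pi_{\F_i}$, the conditional weight of $\F_i \cup \A$ factorises into an $\A$-independent factor and an $\A$-dependent factor that only sees the boundary partition $\xi(\F_i)$. Recall $w_G(\F_i\cup\A) = q^{c(\F_i\cup\A)}(e^\beta-1)^{|\In(\F_i)|+|\In(\A)|}$, so the only non-trivial dependence on $\F_i$ that survives the normalisation is through $c(\F_i\cup\A)$. The goal is therefore to prove that
\[ c(\F_1\cup \A) - c(\F_2\cup \A) \]
is a constant independent of $\A\in\Omega_F$, which combined with $|\In(\F_1)| = |\In(\F_2)|$ (from $R(\F_1)=R(\F_2)$ and the fact that this quantity is determined by the revealed in-edges alone... but actually these could differ — see below) will suffice. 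In fact the claim is about equality of projections, so a common multiplicative factor depending on $i$ is fine; the essential point is that the $\A$-dependence is identical.

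First I will partition $V$ into three sets depending on $R(\F_i)$ (which is the same for $i=1,2$): the set $V_R$ of vertices whose every incident edge lies in $R(\F_i)$, the boundary $V_B=\partial_G(R(\F_i))$, and the set $V_U$ of vertices whose every incident edge lies in $F$. Observe that every edge in $F$ has both endpoints in $V_B\cup V_U$, since a vertex in $V_R$ has no unrevealed incident edge. Consequently, when we form $(V,\In(\F_i\cup\A))$ from $(V,\In(\F_i))$ by adding the edges in $\In(\A)$, only components meeting $V_B\cup V_U$ can be affected by $\A$; the remaining components, namely those of $(V,\In(\F_i))$ lying entirely inside $V_R$, persist unchanged. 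Call the number of such components $c_{V_R}(\F_i)$; it depends on $\F_i$ but not on $\A$.

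Next I will describe the components that do meet $V_B\cup V_U$ via a quotient graph. Define an equivalence relation $\sim_i$ on $V_B\cup V_U$ by declaring $u\sim_i v$ iff $u,v$ lie in a common class of $\xi(\F_i)$ (and every vertex of $V_U$ is a singleton class). Let $H_i(\A)$ be the graph on $(V_B\cup V_U)/\!\sim_i$ whose edges are the images of $\In(\A)$. A straightforward verification, using the definition of $\xi$ and the fact that components of $(V,\In(\F_i))$ containing a boundary vertex are in bijection with the non-empty classes of $\xi(\F_i)$, shows
\[ c(\F_i\cup \A) \;=\; c_{V_R}(\F_i) \;+\; M_i(\A), \]
where $M_i(\A)$ is the number of connected components of $H_i(\A)$. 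The hypothesis $\xi(\F_1)=\xi(\F_2)$ gives $\sim_1\ =\ \sim_2$, hence $H_1(\A)=H_2(\A)$ and $M_1(\A)=M_2(\A)$ for every $\A\in\Omega_F$.

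Finally, plugging into the definition of $\pi_{\F_i}$ gives
\[ \pi_{\F_i}(\F_i\cup\A) \;=\; \frac{q^{c_{V_R}(\F_i)}(e^\beta-1)^{|\In(\F_i)|}\cdot q^{M_i(\A)}(e^\beta-1)^{|\In(\A)|}}{q^{c_{V_R}(\F_i)}(e^\beta-1)^{|\In(\F_i)|}\sum_{\A'\in\Omega_F} q^{M_i(\A')}(e^\beta-1)^{|\In(\A')|}}, \]
and the $\F_i$-dependent prefactor cancels, leaving an expression that depends on $\F_i$ only through $M_i(\cdot)$; since $M_1=M_2$, the two projections agree. The main conceptual step to get right is the component-counting identity $c(\F_i\cup\A)=c_{V_R}(\F_i)+M_i(\A)$; everything else is bookkeeping around the Gibbs weights.
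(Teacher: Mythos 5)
Your proposal is correct and follows essentially the same route as the paper's proof: both decompose $c(\F_i\cup\A)$ into an $\A$-independent count of components confined to the fully-revealed region plus a count of components meeting the boundary/unrevealed vertices, and observe that the latter depends on $\F_i$ only through $\xi(\F_i)$, so the weights differ by an $\A$-independent multiplicative factor that cancels in the conditional distribution. Your quotient graph $H_i(\A)$ is just a more explicit formalization of the one-to-one correspondence the paper asserts between boundary-meeting components, so no further comment is needed.
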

\begin{proof}
    Let $\A$ be a partial configuration in~$\Omega_F$. Consider the components of
    $G[\In(\F_1\cup\A)]$ and $G[\In(\F_2\cup\A)]$. 
    \begin{itemize}
        \item 
            The components of~$G[\In(\F_1\cup \A)]$ containing only vertices of $V_R(\F_1)\backslash\partial\F_1$ do not depend on the partial configuration~$\A$. Let $C_1$ be the number of these components. Similarly, the components of~$G[\In(\F_2 \cup \A)]$ containing only vertices of $V_R(\F_2) \backslash \partial \F_2$ do not depend on~$\A$. Let $C_2$ be the number of these components.  
        \item 
            The components 
            of~$G[\In(\F_1\cup \A)]$
            containing only vertices of $V\backslash V_R(\F_1)$ depend on~$\A$ but not on~$F$,
            so the same components are in~$G[\In(\F_2\cup \A)]$.  
        \item 
            Since $\xi(\F_1) = \xi(\F_2)$, there is one-to-one correspondence between components of~$G[\In(\F_1 \cup \A)]$ that contain vertices in $\partial \F_1$ and components of~$G[\In(\F_2 \cup \A)]$ containing vertices in $\partial \F_2$. Any two components that correspond to each other induce the same component on~$F$.
    \end{itemize}
    
    Thus, for any $\A \colon F \rightarrow \{0,1,*\}$, 
    the number of components in $G[\In(\A \cup \F_1)]$ minus~$C_1$ is equal to the number of components in $G[\In(\A \cup \F_2)]$ minus~$C_2$.  
    Hence $w_G(\A\cup\F_1)$ and $w_G(\A\cup\F_2)$ differ by a constant multiplicative factor, from which the result follows.
\end{proof}

Cor~\ref{cor:monotonicity} gives information about
marginals in the case where $R(\F_1) = R(\F_2)$ and $\In(\F_1) \subseteq \In(\F_2)$. The proof follows from Observation~\ref{obs:stuff}.

\begin{observation}\label{obs:stuff}
    Let $f$ be an edge in $E$. Consider partial configurations $\F_1$ and $\F_2$ with $R(\F_1) = R(\F_2)= E\backslash \{f\}$ and $\In(\F_1) \subseteq \In(\F_2)$. Let $\A_1 \sim \pi_{\F_1}$ and $\A_2 \sim \pi_{\F_2}$. Then $\Pr( f\in \In(\A_1)) \leq \Pr(f\in \In(\A_2))$.
\end{observation}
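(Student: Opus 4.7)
The plan is to compute both probabilities explicitly and then compare them case by case. Since $R(\F_i) = E\setminus\{f\}$ for $i=1,2$, the completion space $\Omega_{\F_i}$ consists of exactly two configurations, differing only in the value assigned to $f$. Writing $f = \{u,v\}$ and letting $c_i$ denote the number of connected components of $G[\In(\F_i)]$, the conditional probability that $f$ is an in-edge under $\pi_{\F_i}$ is simply the ratio of weights of the two completions.

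The key observation is that adding $f$ as an in-edge changes the number of components by $-1$ if $u$ and $v$ lie in different components of $G[\In(\F_i)]$, and by $0$ if they already lie in the same component. A direct calculation using $w_G(\F) = q^{c(\F)}(e^\beta-1)^{|\F|}$ therefore gives
\[
\Pr(f \in \In(\A_i)) \;=\; \begin{cases} p = 1-e^{-\beta}, & \text{if $u,v$ are connected in $G[\In(\F_i)]$,}\\ \hat{p} = \tfrac{p}{(1-p)q+p}, & \text{otherwise.}\end{cases}
\]
For $q \geq 1$ one has $\hat{p} \leq p$, so the ``connected'' value is always at least the ``disconnected'' value.

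Now the conclusion follows from the monotonicity of the relation ``$u,v$ lie in the same component.'' Since $\In(\F_1) \subseteq \In(\F_2)$, any path in $G[\In(\F_1)]$ is also a path in $G[\In(\F_2)]$, so connectivity in $G[\In(\F_1)]$ implies connectivity in $G[\In(\F_2)]$. Splitting into cases according to whether $u,v$ are connected in $G[\In(\F_1)]$ and $G[\In(\F_2)]$, we have: if they are connected in both, both probabilities equal $p$; if they are connected in neither, both probabilities equal $\hat{p}$; and in the remaining case they are disconnected in $\F_1$ but connected in $\F_2$, giving $\Pr(f\in\In(\A_1)) = \hat{p} \leq p = \Pr(f\in\In(\A_2))$.

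There is no real obstacle here—this is a one-edge special case of the standard FKG monotonicity of the random cluster model for $q \geq 1$. The only ``care'' required is to note that the ratio $\hat{p}/p = \tfrac{e^\beta}{e^\beta - 1 + q}$ is at most $1$ precisely when $q \geq 1$, which is the regime of interest throughout the paper.
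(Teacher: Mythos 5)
Your proposal is correct and follows essentially the same route as the paper: both proofs reduce to comparing the weights of the two completions, note that the component count drops by $1$ or $0$ according to whether the endpoints of $f$ are already connected by in-edges, and use that this connectivity is monotone under $\In(\F_1)\subseteq\In(\F_2)$ together with $q\geq 1$. The paper phrases the final step as the algebraic inequality $c(\F_{1,1})-c(\F_{1,0})\leq c(\F_{2,1})-c(\F_{2,0})$ rather than your explicit $p$ versus $\hat p$ case analysis, but this is only a difference in presentation.
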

\begin{proof}
    First, observe that if $G[\In(\F_1)]$ has two components connected by~$f$ then the vertices in these components are connected in $G[\In(\F_2)\cup \{f\}]$.
    
    For $j\in\{0,1\}$, let $\F_{i,j}$ be $\F_i\cup\{f\mapsto j\}$. Then, by the observation, $c(\F_{2,0})-c(\F_{2,1})\leq c(\F_{1,0})-c(\F_{1,1})$. It follows that
        
    \begin{align*}
    \Pr(f\in\In(\A_1)) &= \frac{\Pr(\F_{1,1})}{\Pr(\F_{1,1})+\Pr(\F_{1,0})} = 
    \frac{(e^\beta-1)^{|\F_{1}^{-1}(1)|+1} q^{c(\F_{1,1})}}{
    (e^\beta-1)^{|\F_{1}^{-1}(1)|+1} q^{c(\F_{1,1})} + 
    (e^\beta-1)^{|\F_{1}^{-1}(1)|} q^{c(\F_{1,0})}} \\
    &= \frac{(e^\beta-1)q^{c(\F_{1,1})-c(\F_{1,0})}}{(e^\beta-1)q^{c(\F_{1,1})-c(\F_{1,0})} + 1} \leq
    \frac{(e^\beta-1)q^{c(\F_{2,1})-c(\F_{2,0})}}{(e^\beta-1)q^{c(\F_{2,1})-c(\F_{2,0})} + 1} \\
    &= \Pr(f\in\In(\A_2))
    \end{align*}
        
    where the inequality follows from $c(\F_{1,1})-c(\F_{1,0})\leq c(\F_{2,1})-c(\F_{2,0})$.
\end{proof}

In the language of~\cite{GrimmettGeoffrey2006TrmRCM}, Observation~\ref{obs:stuff} says that $\pi$ is $1$-monotonic. It implies the following monotonicity result.

\begin{corollary}\label{cor:monotonicity}
    Let $f$ be an edge of $E$.
    Consider partial configurations $\F_1$ and $\F_2$ with $R(\F_1) = R(\F_2)$ and $\In(\F_1) \subseteq \In(\F_2)$. Let $\A_1 \sim \pi_{\F_1}$ and $\A_2 \sim \pi_{\F_2}$. Then $\Pr( f\in \In(\A_1)) \leq \Pr(f\in \In(\A_2))$. 
\end{corollary}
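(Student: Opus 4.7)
The plan is to bootstrap from Observation~\ref{obs:stuff}, which handles only the case where exactly one edge is unrevealed, by constructing a monotone coupling of $\A_1$ and $\A_2$ that ensures $\In(\A_1)\subseteq \In(\A_2)$ almost surely; the desired marginal inequality for $f$ then follows immediately.

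First I would dispatch the trivial case $f\in R(\F_1)=R(\F_2)$: here $f\in \In(\A_i)$ iff $f\in \In(\F_i)$ deterministically, and $\In(\F_1)\subseteq \In(\F_2)$ gives the comparison. So assume $f\notin R(\F_1)$, and let $U=E\setminus R(\F_1)$ denote the set of unrevealed edges. Choose any extensions $X_0,Y_0:E\to\{0,1\}$ of $\F_1,\F_2$ respectively such that $\In(X_0)\subseteq \In(Y_0)$; for instance set $X_0$ to be ``out'' on every edge of $U$ and $Y_0$ to be ``in'' on every edge of $U$.

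Next I would run two coupled Glauber-style chains $(X_t)_{t\ge 0}$ and $(Y_t)_{t\ge 0}$, each of which updates only edges of $U$, as follows. At each step, pick the same edge $e\in U$ uniformly at random and the same uniform $V_t\in[0,1]$. In chain $X$, let $p_X$ be the conditional probability that $e$ is ``in'' given the values of $X_t$ on $E\setminus\{e\}$; set $X_{t+1}(e)=1$ iff $V_t\le p_X$. Define $Y_{t+1}(e)$ analogously via $p_Y$, and leave all other edges of $X_t,Y_t$ unchanged. The restrictions of $X_t,Y_t$ to $E\setminus\{e\}$ are partial configurations with common revealed set $E\setminus\{e\}$, and by induction $\In(X_t\restriction_{E\setminus\{e\}})\subseteq \In(Y_t\restriction_{E\setminus\{e\}})$; Observation~\ref{obs:stuff} (applied with this common revealed set and the edge~$e$) gives $p_X\le p_Y$, so the monotone coupling maintains the invariant $\In(X_{t+1})\subseteq \In(Y_{t+1})$. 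Moreover, both chains preserve the condition on $R(\F_1)$ since we never update those edges.

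Finally, these chains are irreducible (every value of every unrevealed edge is reachable via single updates), aperiodic, and reversible with respect to $\pi_{\F_1}$ and $\pi_{\F_2}$ respectively, so $X_t\Rightarrow \A_1\sim \pi_{\F_1}$ and $Y_t\Rightarrow \A_2\sim \pi_{\F_2}$. Passing to the limit along a suitable subsequence (or by taking a stationary coupling), we obtain a joint distribution in which $\In(\A_1)\subseteq\In(\A_2)$ almost surely, so in particular $\mathbf{1}\{f\in \In(\A_1)\}\le \mathbf{1}\{f\in \In(\A_2)\}$ pointwise and the stated inequality follows by taking expectations. The only mildly delicate point is that the ``local'' monotonicity of Observation~\ref{obs:stuff} is applied at each step with a different revealed set (all of $E\setminus\{e\}$, not just $R(\F_1)$), but this is exactly the situation Observation~\ref{obs:stuff} covers, so there is no real obstacle.
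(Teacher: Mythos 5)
Your proof is correct, but it takes a genuinely different route from the paper. The paper's proof simply cites \cite[Theorem 2.27]{GrimmettGeoffrey2006TrmRCM}, which states that a $1$-monotonic random-cluster measure is monotonic (stochastically dominating for all increasing events); Observation~\ref{obs:stuff} supplies the $1$-monotonicity, and the corollary follows by applying the theorem to the increasing event ``$f$ is occupied''. You instead re-derive the needed domination from scratch: you build a monotone coupling of heat-bath Glauber chains targeting $\pi_{\F_1}$ and $\pi_{\F_2}$, invoking Observation~\ref{obs:stuff} at each update step (with revealed set $E\setminus\{e\}$) to show that the single-edge conditional probabilities are ordered, so the invariant $\In(X_t)\subseteq\In(Y_t)$ is preserved, and then you pass to the stationary limit. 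This is essentially the standard Holley-type proof of the very theorem the paper cites, specialised to the present setting. What your approach buys is self-containedness and in fact a stronger conclusion — a full monotone coupling of $\pi_{\F_1}$ and $\pi_{\F_2}$, hence $\pi_{\F_1}(A)\leq\pi_{\F_2}(A)$ for every increasing event $A$, not just the edge marginal — at the cost of some routine but necessary bookkeeping (irreducibility and reversibility of the conditioned chains, and the subsequential-limit argument for the joint law). The one point you flag as ``mildly delicate'' — that Observation~\ref{obs:stuff} is applied with revealed set $E\setminus\{e\}$ rather than $R(\F_1)$ — is indeed harmless, exactly as you say, since conditioning on the full restriction to $E\setminus\{e\}$ subsumes the conditioning on $\F_1$ and $\F_2$.
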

\begin{proof}
    Use \cite[Theorem 2.27]{GrimmettGeoffrey2006TrmRCM}. It says that if a RCM measure is $1$-monotonic, then the measure is monotonic, i.e. $\pi_{\F_1}(A) \leq \pi_{\F_2}(A)$ for any increasing event -- that is any $A\subseteq\Omega$ such that whenever $\F_1', \F_2'$ are configurations with $\In(\F_1')\subseteq\In(\F_2')$ and $\F_1'\in A$, then also $\F_2'\in A$.
    
    Observation~\ref{obs:stuff} says that $\pi$ is $1$-monotonic, thus monotonicity follows by the theorem.    
    To conclude, take $A$ to be the increasing event ``$f$ is occupied'' -- we can see that this is increasing as whenever, for configurations $\F_1', \F_2'$, $f\in\In(\F_1')$, and $\In(\F_1')\subseteq\In(\F_2')$, then also $f\in\In(\F_2')$.
\end{proof}

\end{document}